\providecommand{\U}[1]{\protect\rule{.1in}{.1in}}
\newtheorem{theorem}{Theorem}[section]
\newtheorem{acknowledgement}[theorem]{Acknowledgement}
\newtheorem{corollary}[theorem]{Corollary}
\newtheorem{definition}[theorem]{Definition}
\newtheorem{lemma}[theorem]{Lemma}
\newtheorem{proposition}[theorem]{Proposition}
\newtheorem{remark}[theorem]{Remark}
\newenvironment{proof}[1][Proof]{\textbf{#1.} }{\hfill\rule{0.5em}{0.5em}}
{\catcode`\@=11\global\let\AddToReset=\@addtoreset
\AddToReset{equation}{section}

\AddToReset{theorem}{section}

\begin{document}

\title{$L^{\infty}$ estimates and uniqueness results for nonlinear parabolic
equations with gradient absorption terms}
\author{Marie Fran\c{c}oise BIDAUT-VERON\thanks{Laboratoire de Math\'{e}matiques et
Physique Th\'{e}orique, CNRS UMR 6083, Facult\'{e} des Sciences, 37200 Tours
France. E-mail address:veronmf@univ-tours.fr}
\and Nguyen Anh DAO\thanks{Laboratoire de Math\'{e}matiques et Physique
Th\'{e}orique, CNRS UMR 6083, Facult\'{e} des Sciences, 37200 Tours France.
E-mail address: Anh.Nguyen@lmpt.univ-tours.fr}}
\date{}
\maketitle

\begin{abstract}
We study the nonnegative solutions of the viscous Hamilton-Jacobi
problem
\[
\left\{
\begin{array}
[c]{c}%
u_{t}-\nu\Delta u+|\nabla u|^{q}=0,\\
u(0)=u_{0},
\end{array}
\right.
\]
in $Q_{\Omega,T}=\Omega\times\left(  0,T\right)  ,$ where $q>1,\nu\geqq
0,T\in\left(  0,\infty\right]  ,$ and $\Omega=\mathbb{R}^{N}$ or $\Omega$ is a
smooth bounded domain, and $u_{0}\in L^{r}(\Omega),r\geqq1,$ or $u_{0}%
\in\mathcal{M}_{b}(\Omega).$ We show $L^{\infty}$ decay estimates, valid for
\textit{any weak solution}, \textit{without any conditions a}s $\left\vert
x\right\vert \rightarrow\infty,$ and \textit{without uniqueness assumptions}.
As a consequence we obtain new uniqueness results, when $u_{0}\in
\mathcal{M}_{b}(\Omega)$ and $q<(N+2)/(N+1),$ or $u_{0}\in L^{r}(\Omega)$ and
$q<(N+2r)/(N+r).$ We also extend some decay properties to quasilinear
equations of the model type
\[
u_{t}-\Delta_{p}u+\left\vert u\right\vert ^{\lambda-1}u|\nabla u|^{q}=0
\]
where $p>1,\lambda\geqq0,$ and $u$ is a signed solution. \bigskip\ \bigskip

\textbf{Keywords }Viscous Hamilton-Jacobi equation; quasilinear parabolic
equations with gradient terms; regularity; decay estimates; regularizing
effects; uniqueness results. \bigskip\ \bigskip

\textbf{A.M.S. Subject Classification }35K15, 35K55, 35B33, 35B65, 35D30

\end{abstract}

\section{ Introduction\label{sec1}}

In this article we study a class of  heat equations involving a nonlinear gradient absorption
term. We are mainly concerned by the nonnegative solutions of the 
viscous parabolic Hamilton-Jacobi equation%
\begin{equation}
u_{t}-\nu\Delta u+|\nabla u|^{q}=0 \label{un}%
\end{equation}
in $Q_{\Omega,T}=\Omega\times\left(  0,T\right)  ,$ $T\leqq\infty,$ where
$q>1,\nu\geqq0,$ and $\Omega=\mathbb{R}^{N},$ or $\Omega$ is a smooth bounded
domain of $\mathbb{R}^{N}$.\medskip

We study the Cauchy problem in $\mathbb{R}^{N}$ and the Cauchy-Dirichlet
problem when $\Omega$ is bounded, with initial data $u(.,0)=u_{0}%
\geqq0,$ where $u_{0}\in L^{r}(\Omega),r\geqq1,$ or $u_{0}$ is a bounded Radon
measure on $\Omega.$ \medskip\ 

We also consider the (signed) solutions of quasilinear equations of the type
\begin{equation}
u_{t}-\nu\Delta_{p}u+\left\vert u\right\vert ^{\lambda-1}u|\nabla u|^{q}=0
\label{fru}%
\end{equation}
where $p>1$ and $\Delta_{p}$ is the $p$-Laplacian, and more generally
\begin{equation}
u_{t}-\operatorname{div}(\mathrm{A}(x,t,u,\nabla u))+g(x,u,\nabla u)=0
\label{eqge}%
\end{equation}
with natural growth conditions on the function $\mathrm{A,}$ and nonnegativity
conditions
\begin{equation}
\mathrm{A}(x,t,u,\eta).\eta\geqq\nu\left\vert \eta\right\vert ^{p},\qquad
g(x,u,\eta)u\geqq\gamma\left\vert u\right\vert ^{\lambda+1}|\nabla
u|^{q}\qquad\gamma\geqq0,\nu\geqq0,\lambda\geqq0, \label{alp}%
\end{equation}
without monotonicity assumption.\medskip

In the sequel we give some \textit{decay estimates}, \textit{under very few
assumptions on the solutions}. Then from Moser's technique, we deduce
\textit{regularizing effects} : $L^{\infty}$ estimates, in terms of $u_{0},$
and universal estimates when $\Omega$ is bounded. We show that \textit{two
types} of regularizing effect can occur: the first one is due to the gradient
term $|\nabla u|^{q}$ (when $\gamma>0$)$,$ the second one is due to the
operator itself (when $\nu>0$)$.$ \medskip

A part of these estimates are well known for equation (\ref{un}) when the
solutions can be approximated by smooth solutions, or satisfy growth
conditions as $\left\vert x\right\vert \rightarrow$ $\infty$ when
$\Omega=\mathbb{R}^{N}$, for example semi-group solutions. Our approach is
different, and our results are valid for \textit{all the solutions} of the
equation \textit{in a weak sense}: in the sense of distributions for equation
(\ref{un}), in the renormalized sense for equation (\ref{eqge}). And we make
\textit{no assumption of uniqueness}. In the case of equation (\ref{un}) in
$\mathbb{R}^{N}$, we require \textit{no condition as }$\left\vert x\right\vert
\rightarrow\infty$\textit{, all our assumptions are local. }\medskip

As a consequence we deduce \textit{new uniqueness results} for equation
(\ref{un}) in $\mathbb{R}^{N}$ or in a bounded domain $\Omega.$

\section{Main results}

\noindent We denote by $\mathcal{M}_{b}(\Omega)$ the set of bounded Radon
measures in $\Omega,$ and $\mathcal{M}_{b}^{+}(\Omega)$ the cone of
nonnegative ones.

\noindent We set $Q_{\Omega,s,\tau}=\Omega\times\left(  s,\tau\right)  ,$ for
any $0\leqq s<\tau\leqq\infty,$ thus $Q_{\Omega,T}=Q_{\Omega,0,T}.$

\noindent As usual, for any $\theta\geqq1$ we note by $\theta^{\prime}%
=\theta/(\theta-1)$ the conjugate of $\theta.$\bigskip

In \textbf{Section \ref{sec2}, w}e give some key tools for obtaining
regularizing properties. The main one is an iteration property based of
Moser's method, inspired by \cite{Ve}:

\begin{lemma}
\label{prod} Let $m>1,$ $\theta>1$ and $\lambda\in\mathbb{R}$ and $C_{0}>0$.
Let $v\in C(\left[  0,T\right)  ;L_{loc}^{1}(\Omega))$ be nonnegative, and
$v_{0}=v(x,0)\in L^{r}(\Omega)$ for some $r\geqq1$ such that
\begin{equation}
r>\theta^{\prime}(1-m-\lambda). \label{ele}%
\end{equation}
If $r>1$ we assume that for any $0\leqq s<t<T$ and any $\alpha\geqq r-1,$
there holds
\begin{equation}
\frac{1}{\alpha+1}\int_{\Omega}v^{\alpha+1}(.,t)dx+\frac{C_{0}}{\beta^{m}}%
\int_{s}^{t}(\int_{\Omega}v^{\beta m\theta}(.,\tau)dx)^{\frac{1}{\theta}}%
d\tau\leqq\frac{1}{\alpha+1}\int_{\Omega}v^{\alpha+1}(.,s)dx, \label{mac}%
\end{equation}
where $\beta=\beta(\alpha)=1+\left(  \alpha+\lambda\right)  /m$, and the
right-hand side \textbf{can be infinite}.$\medskip$

\noindent If $r=1$ we make one of the two following assumptions:\medskip

(H$_{1}$) (\ref{mac}) holds for any $\alpha\geqq0$,\medskip

(H$_{2}$) $\int_{\Omega}v(.,t)dx\leqq\int_{\Omega}v_{0}dx$ for any
$t\in\left(  0,T\right)  $, and $v_{0}\in L^{\rho}(\Omega)$ for some $\rho>1$
such that $\rho\theta^{\prime}(1-m-\lambda)<1$ and (\ref{mac}) holds for any
$\alpha\geqq\rho-1.$\medskip

\noindent\ Then there exists $C>0,$ depending on $N,m,r,\lambda,C_{0},$ and
possibly $\rho,$ such that for any $t\in(0,T),$
\begin{equation}
\Vert v(.,t)\Vert_{L^{\infty}(\Omega)}\leqq Ct^{-\sigma_{r,m,\lambda,\theta}%
}\Vert v_{0}\Vert_{L^{r}(\Omega)}^{\varpi_{r,m,\lambda,\theta}}, \label{trap}%
\end{equation}
where
\begin{equation}
\sigma_{r,m,\lambda,\theta}=\frac{1}{\frac{r}{\theta^{\prime}}+\lambda
+m-1}=\frac{\theta^{\prime}}{r}\varpi_{r,m,\lambda,\theta}. \label{jam}%
\end{equation}

\end{lemma}

This Lemma allows to obtain $L^{\infty}$ estimates for the solutions of
equation (\ref{un}), when $q\leqq N,$ or $2\leqq N$ , and for equation
(\ref{fru}) when $p\leqq N$ . In the other cases the $L^{\infty}$ estimates
follow from the Gagliardo-Nirenberg inequality, see Lemma \ref{gani}. Moreover
we deduce \textit{universal} $L^{\infty}$ estimates when $\Omega$ is bounded,
see Lemma \ref{univ}.\bigskip

In \textbf{Section \ref{Rn}} we study the Cauchy Hamilton-Jacobi problem in
$\mathbb{R}^{N}:$
\begin{equation}
\left\{
\begin{array}
[c]{l}%
u_{t}-\nu\Delta u+|\nabla u|^{q}=0,\quad\text{in}\hspace{0.05in}%
Q_{\mathbb{R}^{N},T},\\
u(x,0)=u_{0}\geqq0\quad\text{in}\hspace{0.05in}\mathbb{R}^{N},
\end{array}
\right.  \label{cau}%
\end{equation}
This equation is the objet of a huge literature, see \cite{AmBa},
\cite{BeLa99}, \cite{BeBALa}, \cite{BASoWe}, \cite{SoZh}, and the references
therein, and also \cite{BeBALa}, \cite{BeLaSc}, \cite{GaLa}. \medskip

The first studies concern smooth initial data $u_{0}\in C_{b}^{2}\left(
\mathbb{R}^{N}\right)  .$ From \cite{AmBa}, (\ref{cau}) has a unique global
solution $u\in C^{2,1}(\mathbb{R}^{N}\times\left[  0,\infty\right)  )$, and
$u$ satisfies decay properties:
\begin{align*}
\left\Vert u(.,t)\right\Vert _{L^{\infty}(\mathbb{R}^{N})}  &  \leqq\left\Vert
u_{0}\right\Vert _{L^{\infty}(\mathbb{R}^{N})},\\
\left\Vert \nabla u(.,t)\right\Vert _{L^{\infty}(\mathbb{R}^{N})}  &
\leqq\left\Vert \nabla u_{0}\right\Vert _{L^{\infty}(\mathbb{R}^{N})}.
\end{align*}
Estimates of the gradient have been obtained \textit{for this solution}, by
using the Bersnstein technique, which consists in computing the equation
satisfied by $|\nabla u|^{2}:$ first from \cite{Li},%
\[
\left\Vert \nabla u(.,t)\right\Vert _{L^{\infty}(\mathbb{R}^{N})}^{q}\leqq
t^{-1}\left\Vert u_{0}\right\Vert _{L^{\infty}(\mathbb{R}^{N})},
\]
then from \cite{BeLa99}, when $\nu>0,$%
\begin{equation}
\Vert\nabla(u^{\frac{1}{q^{\prime}}})(.,t)\Vert_{L^{\infty}(\mathbb{R}^{N}%
)}\leqq C(q,\nu)t^{-\frac{1}{2}}\Vert u_{0}\Vert_{L^{\infty}(\mathbb{R}^{N}%
)}^{\frac{1}{q^{\prime}}},, \label{cor}%
\end{equation}%
\begin{equation}
\left\Vert \nabla(u^{\frac{1}{q^{\prime}}})(.,t)\right\Vert _{L^{\infty
}(\mathbb{R}^{N})}\leqq\frac{(q-1)^{\frac{1}{q^{\prime}}}}{q}t^{-\frac{1}{q}%
},\quad\text{that is}\quad\left\vert \nabla u(.,t)\right\vert ^{q}\leqq
\frac{t^{-1}u(.,t)}{q-1},\quad a.e.\text{in }\mathbb{R}^{N}. \label{bela}%
\end{equation}
If one only assumes $u_{0}\in C_{b}\left(  \mathbb{R}^{N}\right)  $, then
(\ref{cau}) still has a unique solution $u$ such that $u\in C^{2,1}%
(Q_{\mathbb{R}^{N},\infty})$ and $u\in C(\mathbb{R}^{N}\times\left[
0,\infty\right)  \cap L^{\infty}(\mathbb{R}^{N}\times\left(  0,\infty\right)
)$ see \cite{GiGuKe}, and estimates (\ref{cor}) and (\ref{bela}) are still
valid, from \cite{BeBALa}.\medskip\ 

In case of rough initial data $u_{0}\in\mathcal{M}_{b}^{+}(\mathbb{R}^{N})$ or
$u\in L^{r}(\mathbb{R}^{N}),$ $r\geqq1$, assuming $\nu>0,$ the solutions have
been searched in an integral form%
\begin{equation}
u(.,t)=e^{t\Delta}u_{0}(.)-\nu\int_{0}^{t}e^{(t-s)\Delta}|\nabla
u(.,s)|^{q}ds, \label{itf}%
\end{equation}
involving the semi-group of heat equation $e^{t\Delta}.$ Existence results
hold in corresponding classes of solutions, involving integral conditions on
the gradient in space and time, of \textit{global type}:\medskip

$\bullet$ If $u_{0}\in\mathcal{M}_{b}^{+}(\mathbb{R}^{N})$ and
$1<q<(N+2)/(N+1),$ the existence of a solution $u\in C^{2,1}(Q_{\mathbb{R}%
^{N},\infty})$ is proved in \cite{BeLa99} by approximation, and independently
in \cite{BASoWe}, from the Banach fixed point theorem. \medskip\ 

$\bullet$ If $u_{0}\in L^{r}(\mathbb{R}^{N}),$ $r\geqq1,$ existence holds for
any $q\leqq2$ from \cite{BASoWe}. When $q>2,$ it is required that $u_{0}$ is a
limit of a monotone sequence of continuous functions, and existence is not
known in the general case.\bigskip

In those classes, decay properties and a regularizing effect follow directly
from the semigroup $e^{t\Delta},$ since $u(.,t)\leqq e^{t\Delta}u_{0}.$ Our
first main results shows that \textit{decay properties and }$L^{\infty}%
$\textit{ estimates} are valid \textit{for any weak solution}, for any
$\nu\geqq0,$ \textit{without any condition as }$\left\vert x\right\vert
\rightarrow\infty$:

\begin{theorem}
\label{main}Let $u\in L_{loc}^{1}(Q_{\mathbb{R}^{N},T}),$ with $|\nabla u|\in
L_{loc}^{q}(Q_{\mathbb{R}^{N},T}),$ be any nonnegative solution of equation
(\ref{un}) in$\hspace{0.05in}\mathcal{D}^{\prime}(Q_{\mathbb{R}^{N},T}%
).$\medskip

(i) Let $u_{0}\in L^{r}(\mathbb{R}^{N}),r\geqq1.$ Assume that $u\in C(\left[
0,T\right)  ;L_{loc}^{r}(\mathbb{R}^{N}))$ and $u(.,0)=u_{0}$. Then $u\in
C(\left[  0,T\right)  ;L^{r}(\mathbb{R}^{N}));$ and for any $t\in(0,T),$
$u(.,t)\in L^{\infty}(\mathbb{R}^{N})$ and%
\begin{equation}
\Vert u(.,t)\Vert_{L^{r}(\mathbb{R}^{N})}\leqq\Vert u_{0}\Vert_{L^{r}%
(\mathbb{R}^{N})}, \label{pas}%
\end{equation}%
\begin{equation}
\Vert u(.,t)\Vert_{L^{\infty}(\mathbb{R}^{N})}\leqq\left\{
\begin{array}
[c]{ccc}%
Ct^{-\sigma_{r,q,N}}\Vert u_{0}\Vert_{L^{r}(\mathbb{R}^{N})}^{\varpi_{r,q,N}%
}, & C=C(N,q,r), & \text{if }q\neq N,\\
C_{\varepsilon}t^{-(1+\varepsilon)\sigma_{r,N,N}}\Vert u_{0}\Vert
_{L^{r}(\mathbb{R}^{N})}^{(1+\varepsilon)\varpi_{r,q,N}}, & \forall
\varepsilon>0,\quad C_{\varepsilon}=C(N,q,r,\varepsilon), & \text{if }q=N,
\end{array}
\right.  \label{uinf}%
\end{equation}
where
\begin{equation}
\sigma_{r,q,N}=\frac{1}{\frac{rq}{N}+q-1}=\frac{N}{rq}\varpi_{r,q,N}.
\label{sigr}%
\end{equation}
And if $\nu>0,$ then%
\begin{equation}
\Vert u(.,t)\Vert_{L^{\infty}(\mathbb{R}^{N})}\leqq\left\{
\begin{array}
[c]{ccc}%
Ct^{-\frac{N}{2r}}\Vert u_{0}\Vert_{L^{r}(\mathbb{R}^{N})}, & C=C(N,r,\nu), &
\text{if }N\neq2,\\
C_{\varepsilon}t^{-\frac{1+\varepsilon}{r}}\Vert u_{0}\Vert_{L^{r}%
(\mathbb{R}^{N})}, & \forall\varepsilon>0,\quad C_{\varepsilon}=C(N,r,\nu
,\varepsilon), & \text{if }N=2.
\end{array}
\right.  \label{use}%
\end{equation}

(ii) Let $u_{0}\in\mathcal{M}_{b}^{+}(\mathbb{R}^{N})$. Assume that $u(.,t)$
converges weakly $^{\ast}$ to $u_{0}$ as $t\rightarrow0.$ Then $u\in
C((0,T);L^{1}(\mathbb{R}^{N})),$ and for any $t\in(0,T),$ the conclusions
above with $r=1$ are still valid with $\Vert u_{0}\Vert_{L^{1}(\mathbb{R}%
^{N})}$ replaced by $%
{\displaystyle\int_{\mathbb{R}^{N}}}
du_{0}.$
\end{theorem}

Note that estimates (\ref{pas}) are not valid for \textit{any weak subsolution
}of the heat equation. Here we prove that the result of (\ref{pas}) is
\textit{essentially} \textit{due to the gradient term} $\left\vert \nabla
u\right\vert ^{q},$ which has a main regularizing effect on the equation. And
then a second regularizing effect holds, due to the Laplacian, when $\nu
>0.$\bigskip

For any $q\leq2,$ we deduce estimates of the gradient, obtained from
(\ref{cor}). As a consequence we deduce new uniqueness results, where the
assumptions are only of \textit{local type}:

\begin{theorem}
\label{uni1}(i) Let $1<q<(N+2)/(N+1),$ and $u_{0}\in\mathcal{M}_{b}%
^{+}(\mathbb{R}^{N}).$ Then there exists a unique nonnegative function $u\in
L_{loc}^{1}(Q_{\mathbb{R}^{N},T}),$ such that $|\nabla u|\in L_{loc}%
^{q}(Q_{\mathbb{R}^{N},T}),$ solution of equation (\ref{un}) in $\mathcal{D}%
^{\prime}(Q_{\mathbb{R}^{N},T})$ such that
\[
\lim_{t\rightarrow0}\int_{\mathbb{R}^{N}}u(.,t)\psi dx=\int_{\mathbb{R}^{N}%
}\psi du_{0},\qquad\forall\psi\in C_{c}(\mathbb{R}^{N}).
\]

(ii) Let $u_{0}\in L^{r}(\mathbb{R}^{N}),$ $r\geqq1$ and $1<q<(N+2r)/(N+r).$
Then there exists a unique nonnegative solution $u$ as above, such that $u\in
C\left(  \left[  0,T\right)  ;L_{loc}^{r}(\mathbb{R}^{N})\right)  $ and
$u(.,0)=u_{0}.$\bigskip
\end{theorem}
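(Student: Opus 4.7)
The strategy is to combine the $L^{\infty}$ regularizing effect from Theorem \ref{main} with the Bernstein gradient estimate announced just before the theorem statement, then to linearize the equation for the difference of two solutions and close a Gronwall-type argument. The subcritical exponents $(N+2)/(N+1)$ and $(N+2r)/(N+r)$ will appear naturally at the crossroads of these two ingredients.

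Let $u_{1},u_{2}$ be two nonnegative weak solutions sharing the initial trace $u_{0}$. Theorem \ref{main} supplies, for every $t\in(0,T)$,
\[
\|u_{i}(\cdot,t)\|_{L^{\infty}(\mathbb{R}^{N})}\leqq Ct^{-\sigma}M^{\varpi},\qquad\|u_{i}(\cdot,t)\|_{L^{1}(\mathbb{R}^{N})}\leqq M,
\]
with $M=\int du_{0}$ in case (i) and $M=\|u_{0}\|_{L^{r}}$ in case (ii). In parallel, the Bernstein technique (valid because $q<2$ in both subcritical ranges) provides a pointwise gradient bound $\|\nabla u_{i}(\cdot,t)\|_{L^{\infty}}\leqq Ct^{-\delta}$ whose exponent $\delta$ is dictated by the self-similar scaling of (\ref{un}). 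Setting $w=u_{1}-u_{2}$ and applying the mean value theorem converts the nonlinear absorption term into a drift, giving the linear problem
\[
w_{t}-\nu\Delta w+\vec B(x,t)\cdot\nabla w=0,\qquad|\vec B(x,t)|\leqq C\bigl(|\nabla u_{1}|^{q-1}+|\nabla u_{2}|^{q-1}\bigr)\leqq Ct^{-(q-1)\delta}.
\]

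Testing against $|w|^{p-2}w$ and absorbing the drift into the Laplacian term via Young's inequality produces, on any subinterval $(\tau,t)\subset(0,T)$, a Gronwall bound of the form
\[
\|w(\cdot,t)\|_{L^{p}}\leqq\|w(\cdot,\tau)\|_{L^{p}}\exp\!\Bigl(C\!\int_{\tau}^{t}s^{-2(q-1)\delta}\,ds\Bigr).
\]
In case (ii), the hypothesis $u_{i}\in C([0,T);L^{r}_{loc})$ with $u_{i}(\cdot,0)=u_{0}$ combined with Theorem \ref{main}(i) upgrades to $\|w(\cdot,\tau)\|_{L^{r}(\mathbb{R}^{N})}\to 0$; interpolation with the $L^{\infty}$ bound controls $\|w(\cdot,\tau)\|_{L^{p}}$ for a suitable $p$, and the threshold $q<(N+2r)/(N+r)$ is exactly what keeps the exponential factor bounded while forcing the interpolated norm to vanish as $\tau\to0^{+}$. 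In case (i) only weak$^{*}$ convergence of $u_{i}(\cdot,\tau)$ to $u_{0}$ is available, so I would instead approximate $u_{0}$ by $u_{0}^{n}\in L^{1}\cap L^{\infty}$, invoke classical uniqueness for the associated smooth solutions $u^{n}$, and pass to the limit using the uniform $L^{\infty}$, $L^{1}$ and gradient bounds established above to identify both $u_{1}$ and $u_{2}$ with $\lim u^{n}$.

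The main obstacle is precisely the passage $\tau\to 0^{+}$ in the measure case (i). Only weak$^{*}$ convergence is available at the initial time, so the Gronwall argument must be coupled with an approximation scheme in which the exponent $\delta$ of the Bernstein estimate must satisfy $2(q-1)\delta<1$ simultaneously with the interpolation requirement coming from the $L^{\infty}$ blow-up $\tau^{-\sigma_{1,q}}$ and the $L^{1}$ bound $M$. The precise range $q<(N+2)/(N+1)$ turns out to be exactly the window where these constraints are compatible; extracting that sharp condition and justifying the passage to the limit against general distributional (as opposed to semigroup) solutions is where the bulk of the work — and the improvement over \cite{BeLa99} and \cite{BASoWe} — will lie.
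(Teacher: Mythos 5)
Your exponent bookkeeping is consistent with the paper: with the Bernstein rate $\delta=\vartheta_{r,q}=(N+r)/(rq+N(q-1))$, the condition $2(q-1)\delta<1$ is indeed equivalent to $q<(N+2r)/(N+r)$ (and to $q<(N+2)/(N+1)$ when $r=1$), so your drift--Gronwall heuristic detects the right thresholds. But there is a genuine gap in case (i): your Gronwall inequality cannot be closed, because you need $\|w(\cdot,\tau)\|_{L^{p}}\to0$ as $\tau\to0^{+}$, whereas for measure data only weak$^{*}$ convergence is available, $\|w(\cdot,\tau)\|_{L^{1}}$ merely stays bounded by $2\int du_{0}$, and interpolation with the $L^{\infty}$ bound gives $\|w(\cdot,\tau)\|_{L^{p}}\leqq C\tau^{-\sigma_{1,q}(1-1/p)}$, which blows up. Your fallback --- approximate $u_{0}$ by $u_{0}^{n}$ and invoke classical uniqueness for the $u^{n}$ --- only proves uniqueness of the limit of that approximation scheme; it offers no mechanism for identifying an \emph{arbitrary} weak solution $u_{1}$ with $\lim u^{n}$, and that identification is precisely the content of the theorem. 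The paper's resolution is structurally different: it first proves (Theorem \ref{inmea}, Theorem \ref{val}, Corollary \ref{grad}) that $|\nabla u|^{q}\in L_{loc}^{1}(\left[0,T\right);L^{r}(\mathbb{R}^{N}))$ for \emph{every} weak solution, hence every weak solution is a mild (Duhamel) solution; the difference of two solutions then satisfies $(u_{1}-u_{2})(\cdot,t)=\int_{0}^{t}e^{(t-s)\Delta}\left(|\nabla u_{1}|^{q}-|\nabla u_{2}|^{q}\right)ds$ with \emph{no} initial term at all, and the singular Gronwall lemma applied to $\|\nabla(u_{1}-u_{2})(\cdot,t)\|_{L^{q}}$, using $\|\nabla e^{(t-s)\Delta}\|_{L^{1}}\leqq C(t-s)^{-1/2}$ together with the Bernstein bound, closes exactly when $1/2+(q-1)\vartheta_{1,q}<1$, i.e. $q<(N+2)/(N+1)$. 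No vanishing of the difference at the initial time is ever needed.

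Two further points. Even in case (ii), testing the drift equation for $w$ against $|w|^{p-2}w$ on all of $\mathbb{R}^{N}$ presupposes integrability and regularity that an arbitrary weak solution is not known to possess a priori, and the argument degenerates at $p=r=1$, where $|w|^{p-2}w=\mathrm{sign}\,w$ and the Young absorption of the drift term fails. Moreover the Bernstein bound $\|\nabla u_{i}(\cdot,t)\|_{L^{\infty}(\mathbb{R}^{N})}\leqq Ct^{-\delta}$ is not free for an arbitrary weak solution: the paper obtains it by first proving $u(\cdot,\epsilon)\in C_{b}(\mathbb{R}^{N})$ (Corollary \ref{pass}) and then identifying $u$ on $(\epsilon,T)$ with the unique classical solution of \cite{GiGuKe}, to which (\ref{bela}) applies. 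For case (ii) the paper does not rerun a Gronwall argument at all: it checks that any weak $L_{loc}^{r}$ solution lies in the uniqueness class $L_{loc}^{\infty}((0,T);W^{1,qr}(\mathbb{R}^{N}))$ of \cite{BASoWe}. To make your route rigorous you would need either to adopt the mild formulation or to supply these justifications.
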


This improves the former uniqueness results of \cite{BeLa99} and \cite[Theorem
4.1]{BASoWe}, given in classes of semigroup solutions, satisfying conditions
up to $t=0$ for the gradient: $|\nabla u|^{q}\in L_{loc}^{1}(\left[
0,T\right)  ;L^{1}\left(  \mathbb{R}^{N}\right)  )$ in case (i), and $|\nabla
u|^{q}\in L_{loc}^{1}(\left[  0,T\right)  ;L^{r}\left(  \mathbb{R}^{N}\right)
)$ in case (ii).\bigskip

We also find again in a shorter way the existence result of \cite[Theorem
4.1]{BASoWe}, see Proposition \ref{pou}. Finally we improve the estimate
(\ref{pas}) when $q<(N+2r)/(N+r),$ see Theorem \ref{esti}. \bigskip

In \textbf{Section \ref{omega}} we study the Cauchy-Dirichlet problem in a
bounded domain $\Omega$:
\begin{equation}
\left\{
\begin{array}
[c]{l}%
u_{t}-\nu\Delta u+|\nabla u|^{q}=0,\quad\text{in}\hspace{0.05in}Q_{\Omega
,T},\\
u=0,\quad\text{on}\hspace{0.05in}\partial\Omega\times(0,T),\\
u(x,0)=u_{0}\geqq0,
\end{array}
\right.  \label{1.1}%
\end{equation}
Here also the problem is the object of many works, such as \cite{CLS},
\cite{BeDa}, \cite{So}, \cite{BeDaLa}, \cite{Por}.\medskip

If $u_{0}\in C_{0}^{1}\left(  \overline{\Omega}\right)  $, from \cite{CLS},
(\ref{1.1}) admits a unique nonnegative solution $u\in C^{2,1}\left(
\Omega\times(0,\infty)\right)  \cap$ $C\left(  \overline{\Omega}\times\left[
0,\infty\right)  \right)  ,$ such that $|\nabla u|\in C\left(  \overline
{\Omega}\times\left[  0,\infty\right)  \right)  .$ Universal a priori
estimates hold: there exist $C=C(N,q,\Omega)>0$ and a function $D\in
C((0,\infty)$ such that
\begin{equation}
u(.,t)\leqq C(1+t^{-\frac{1}{q-1}})d(x,\partial\Omega),\qquad|\nabla
u(.,t)|\leqq D(t), \label{gd}%
\end{equation}
see \cite[Remark 2.8]{BiDao1}. The estimate on $u$ is based on the
construction of supersolutions, and the estimate of the gradient is deduced
from the first one by the Bernstein technique.\medskip

In case of rough initial data, a notion of mild solutions has been introduced
by \cite{BeDa} (see definition \ref{mildom}). Such solutions satisfy $|\nabla
u|^{q}\in L_{loc}^{1}(\left[  0,T\right)  ;L^{1}\left(  \Omega\right)
).$\medskip

$\bullet$ If $u_{0}\in\mathcal{M}_{b}^{+}(\Omega)$ and $1<q<(N+2)/(N+1),$
there is a unique nonnegative mild solution, see \cite{BeDa}, \cite{Al}. If
$u_{0}\in L^{1}(\Omega),$ and $1<q\leqq2,$ there exists at least a solution,
such that $u\in C\left(  \left[  0,T\right)  ;L^{1}(\Omega\right)  )$.\medskip

$\bullet$ If $1<q<(N+2r)/(N+r)$ uniqueness holds in the class of mild
solutions such that $u\in C\left(  \left[  0,T\right)  ;L^{r}(\Omega)\right)
\cap$ $L_{loc}^{q}\left(  \left[  0,T\right)  ;W^{qr}\left(  \Omega\right)
\right)  .$\medskip

Next we give decay properties and regularizing effects valid for \textit{any
weak solution} of the problem, in particular the universal estimate
\[
\Vert u(.,t)\Vert_{L^{\infty}(\Omega)}\leqq Ct^{-\frac{1}{q-1}}\quad\text{in
}\left(  0,T\right)  ,
\]
where $C=C(N,q),$ see Theorem \ref{deco}. As above we deduce uniqueness results:

\begin{theorem}
\label{uni2}Assume that $\Omega$ is bounded.\medskip

\noindent(i) Let $1<q<(N+2)/(N+1),$ and $u_{0}\in\mathcal{M}_{b}^{+}(\Omega).$
Then there exists a unique nonnegative function $u\in C((0,T);L^{1}\left(
\Omega\right)  )\cap L_{loc}^{q}((0,T);W_{0}^{1,q}\left(  \Omega\right)  ),$
solution of equation (\ref{un}) in $\mathcal{D}^{\prime}(Q_{\Omega,T}),$ such
that
\[
\lim_{t\rightarrow0}\int_{\Omega}u(.,t)\psi dx=\int_{\Omega}\psi du_{0}%
,\qquad\forall\psi\in C_{b}(\Omega).
\]

\noindent(ii) Let $u_{0}\in L^{r}(\Omega),$ $r\geqq1,u_{0}\geqq0,$ and
$1<q<(N+2r)/(N+r).$ Then there exists a unique nonnegative solution $u$ as
above, such that $u\in C\left(  \left[  0,T\right)  ;L^{r}(\Omega)\right)  $
and $u(.,0)=u_{0}.$
\end{theorem}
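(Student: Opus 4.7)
The plan is to follow the architecture used for Theorem \ref{uni1} in $\mathbb{R}^N$, adapting it to the Dirichlet boundary condition. Existence is obtained by approximation anchored on the universal $L^\infty$ bound of Theorem \ref{deco}; uniqueness exploits that same universal bound, combined with the subcriticality assumption which is precisely what is needed to close an $L^1$ contraction at $t=0$.

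\textbf{Existence.} I would regularize the initial data by a sequence $u_{0,n}\in C_c^\infty(\Omega)$ with $u_{0,n}\geqq 0$, $\int_{\Omega}u_{0,n}\,dx\leqq \int_{\Omega}du_{0}$ in case (i), respectively $\|u_{0,n}\|_{L^{r}(\Omega)}\leqq\|u_{0}\|_{L^{r}(\Omega)}$ in case (ii), converging to $u_{0}$ in the appropriate weak sense. Classical parabolic theory produces smooth nonnegative solutions $u_{n}$ of \eqref{1.1}. The universal estimate $\|u_{n}(\cdot,t)\|_{L^{\infty}}\leqq Ct^{-1/(q-1)}$ of Theorem \ref{deco} is uniform in $n$; an $L^{1}$ contraction (via Kato's inequality applied to the Dirichlet problem) yields $\|u_{n}(\cdot,t)\|_{L^{1}}\leqq\int du_{0}$. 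Testing the equation against $u_{n}$ (or a truncature) gives uniform $L^{q}_{loc}$ bounds on $\nabla u_{n}$; the subcriticality $q<(N+2)/(N+1)$, respectively $q<(N+2r)/(N+r)$, is exactly the threshold beyond which these gradient bounds cease to be integrable near $t=0$. Extracting a subsequence converging strongly in $L^{1}_{loc}$ and almost everywhere, one passes to the limit in the distributional formulation and checks the initial trace.

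\textbf{Uniqueness.} Let $u,v$ be two nonnegative solutions sharing the same datum. By Theorem \ref{deco}, both are in $L^{\infty}(\Omega\times[\tau,T])$ for every $\tau>0$; parabolic bootstrap (treating $|\nabla u|^{q}$ first as a bounded source, then iterating with Schauder) makes them $C^{2,1}$ on $\Omega\times(\tau,T)$. Writing $|\nabla u|^{q}-|\nabla v|^{q}=\vec{b}(x,t)\cdot\nabla(u-v)$ with $\vec{b}$ bounded on $[\tau,T]$, the difference $w=u-v$ solves a linear uniformly parabolic equation with bounded coefficients and zero Dirichlet condition, so the standard $L^{1}$ contraction gives $\|w(\cdot,t)\|_{L^{1}(\Omega)}\leqq\|w(\cdot,\tau)\|_{L^{1}(\Omega)}$ for $\tau\leqq t<T$. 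Letting $\tau\to0$ concludes $u\equiv v$, provided one can pass to zero in the right-hand side.

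\textbf{Main obstacle.} The crux is showing that each weak solution attains its initial trace \emph{strongly} in $L^{1}(\Omega)$ (in case (ii) in $L^{r}(\Omega)$), not merely weakly-$\ast$; only then does $\|w(\cdot,\tau)\|_{L^{1}}\to 0$ and the contraction close. This is exactly where the subcritical exponents $(N+2)/(N+1)$ and $(N+2r)/(N+r)$ are used: combined with Theorem \ref{deco}, they ensure $|\nabla u|^{q}\in L^{1}(\Omega\times(0,T_{0}))$ for some $T_{0}>0$, which controls $\int_{0}^{\tau}\!\int_{\Omega}|\nabla u|^{q}$ and thus, by integrating the equation against a test function, upgrades the weak-$\ast$ convergence of $u(\cdot,\tau)$ to strong $L^{1}$ convergence. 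This step is the bounded-domain counterpart of the corresponding argument in the proof of Theorem \ref{uni1}, where the decay estimates at infinity are here replaced by the Dirichlet condition and the universal bound of Theorem \ref{deco}.
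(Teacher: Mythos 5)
Your existence argument follows essentially the same lines as the paper's Proposition \ref{plu} (monotone approximation $u_{0,n}=\min(u_{0},n)$, the universal bound of Theorem \ref{deco}, interior compactness, identification of the trace), so that half is sound in outline; note only that passing to the limit in $|\nabla u_{n}|^{q}$ needs a.e.\ convergence of the gradients, which the paper obtains from the local $C^{2,1}$ compactness of Corollary \ref{patt}, not from uniform $L^{q}_{loc}$ bounds alone. Note also that in the bounded domain the integrability $|\nabla u|^{q}\in L^{1}_{loc}(\left[0,T\right);L^{1}(\Omega))$ holds for \emph{every} $q>1$ (Lemma \ref{sim}), so the subcritical exponents are not the threshold for that property, contrary to what you suggest.

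The uniqueness argument has a genuine gap, located exactly at the point you flag as the main obstacle. In case (i) the datum is a measure: $u(\cdot,\tau)$ and $v(\cdot,\tau)$ converge to $u_{0}$ only weakly-$\ast$ (they cannot converge in $L^{1}(\Omega)$ to, say, a Dirac mass), so $\Vert u(\cdot,\tau)-v(\cdot,\tau)\Vert_{L^{1}(\Omega)}$ has no reason to tend to $0$ and your contraction does not close; subcriticality does not upgrade weak-$\ast$ convergence of the trace to strong $L^{1}$ convergence. Even in case (ii), where strong convergence of the trace is part of the definition, the ``standard $L^{1}$ contraction'' for $w_{t}-\Delta w+\vec{b}\cdot\nabla w=0$ with merely bounded drift is not available: $\frac{d}{dt}\int_{\Omega}|w|\,dx$ is controlled by a term involving $\operatorname{div}\vec{b}$ (equivalently, the adjoint problem has a zeroth-order coefficient $\operatorname{div}\vec{b}$ with no sign), which involves second derivatives of $u,v$ and is not uniformly controlled; moreover $\Vert\vec{b}\Vert_{L^{\infty}(\Omega\times(\tau,T))}$ blows up as $\tau\to0$. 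The paper argues differently: it shows that the decay estimates force every weak solution to be a mild solution satisfying the Duhamel formula down to $t=0$ (Lemma \ref{sim}), and then that every weak solution lies in the uniqueness class of Benachour--Dabuleanu --- in case (i) by quoting \cite[Theorem 3.2]{BeDa} (Theorem \ref{unic}), and in case (ii) (Theorem \ref{nouveau}) by combining the universal bound of Theorem \ref{deco} with the Bernstein gradient estimate (\ref{gd}) to verify, uniformly as $\epsilon\to0$, the weighted bound $t^{\theta}(\Vert u(\cdot,t)\Vert_{L^{qr}(\Omega)}+t^{1/2}\Vert\nabla u(\cdot,t)\Vert_{L^{qr}(\Omega)})\leqq K_{1}$ of (\ref{fac}) --- and finally invokes the fixed-point (singular Gronwall) uniqueness in that class; this is where the thresholds $q<(N+2)/(N+1)$ and $q<(N+2r)/(N+r)$ actually enter. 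If you wish to avoid citing \cite{BeDa}, the contraction you need is the Gronwall estimate on $\Vert\nabla(u-v)(\cdot,t)\Vert_{L^{qr}(\Omega)}$ in the mild formulation, as in the paper's proof of Theorem \ref{souc} for $\mathbb{R}^{N}$, not an $L^{1}$ contraction for the linearized equation.
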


This improves the results of \cite{BeDa}, which required assumptions up to
$t=0$ for the gradient: $|\nabla u|^{q}\in L_{loc}^{1}(\left[  0,T\right)
;L^{1}\left(  \Omega\right)  )$ in case (i), $|\nabla u|\in L_{loc}%
^{q}(\left[  0,T\right)  ;L^{r}\left(  \Omega\right)  )$ in case (ii).\medskip

Finally we show the existence of weak solutions for any $u_{0}\in L^{r}%
(\Omega),$ $r\geqq1,$ such that $u\in C\left(  \left[  0,T\right)
;L^{r}(\Omega\right)  ),$ see Proposition \ref{plu}.\bigskip

In \textbf{Section \ref{quas}} we extend some results of Section \ref{omega}
to the case of the quasilinear equations (\ref{eqge}), with initial data
$u_{0}\in L^{r}\left(  \Omega\right)  $ or $u_{0}\in\mathcal{M}_{b}\left(
\Omega\right)  $, and $u$ may be a signed solution. In the case of equation%
\[
u_{t}-\Delta_{p}u=0,
\]
several local or global $L^{\infty}$ estimates and Harnack properties have
been obtained in the last decades, see for example \cite{Ve}, \cite{DiBeHe1},
\cite{DiBeHe2}, \cite{Ju}, and \cite{DiBeGV}, \cite{BonIaVa} and references
therein. Regularizing properties for equation (\ref{fru}) are given in
\cite{Por} in a Hilbertian context in case $g=0$ or $p=2$.\medskip

Here we combine our iteration method of Section \ref{sec2} with a notion of
\textit{renormalized solution}, developped by many authors \cite{BlMu},
\cite{Po},\cite{Pr}, well adapted to rough initial data. We do not require
that $u(.,t)\in L^{2}(\Omega),$ but we only assume that the truncates
$T_{k}(u)$ of $u$ by $\pm k$ $(k>0)$ lie in $L^{p}((0,T);W^{1,p}(\Omega))$. We
prove decay and $L^{\infty}$ estimates of the following type: if $\gamma>0$,
for any $r\geqq1,$ $p>1$ and (for simplicity) $q\neq N,$ then%
\[
\Vert u(.,t)\Vert_{L^{\infty}(\Omega)}\leqq Ct^{-\sigma}\Vert u_{0}%
\Vert_{L^{r}(\Omega)}^{\varpi},\qquad\sigma=\frac{1}{\frac{rq}{N}+\lambda
+q-1}=\frac{N}{rq}\varpi,
\]
If $\nu>0,$ then for any $r\geqq1,$ and $p\neq N$ such that $p>2N/(N+2),$
\[
\Vert u(.,t)\Vert_{L^{\infty}(\Omega)}\leqq Ct^{-\tilde{\sigma}}\Vert
u_{0}\Vert_{L^{r}(\Omega)}^{\tilde{\varpi}},\qquad\tilde{\sigma}=\frac
{1}{\frac{rp}{N}+p-2}=\frac{N}{rp}\tilde{\varpi}.
\]
And we deduce universal estimates as before:%
\[
\Vert u(.,t)\Vert_{L^{\infty}(\Omega)}\leqq Ct^{-\frac{1}{q-1+\lambda}}%
\quad\text{if }\gamma>0;\qquad\Vert u(.,t)\Vert_{L^{\infty}(\Omega)}\leqq
Ct^{-\frac{1}{p-2}}\quad\text{if }\nu>0\text{ and }p>2.
\]
Such methods can also be extended to porous media equations, and doubly
nonlinear equations involving operators of the form $u\mapsto-\Delta
_{p}(\left\vert u\right\vert ^{m-1}u).$

\section{ Regularization lemmas\label{sec2}}

We begin by a simple bootstrap property, used for example in \cite{Ve}. We
recall the proof for simplicity:

\begin{lemma}
\label{elem}Let $\omega\in\left(  0,1\right)  $ and $\sigma>0,$ and $K,M>0.$
Let $y$ be any positive function on $(0,T)$ such that $y(t)\leqq Mt^{-\sigma}%
$, and for any $0<s<t<T,$
\[
y(t)\leqq K(t-s)^{-\sigma}y^{\omega}(s),
\]
Then $y$ satisfies an estimate independent of $M:$ for any $t\in\left(
0,T\right)  ,$
\begin{equation}
y(t)\leqq2^{\sigma(1-\omega)^{-2}}(Kt^{-\sigma})^{(1-\omega)^{-1}} \label{sil}%
\end{equation}

\end{lemma}

\begin{proof}
We get by induction, for any $n\geqq1$%
\[
y^{\omega^{n-1}}(t/2^{n-1})\leqq K^{\omega^{n-1}}2^{n\sigma\omega^{n-1}%
}t^{-\sigma\omega^{n-1}}y^{\omega^{n}}(t/2^{n}),,\qquad y^{\omega^{n}}%
(t/2^{n})\leqq2^{n\sigma\omega^{n}}t^{-\sigma\omega^{n}}M^{\omega^{n}}.
\]
Then%
\[
y(t)\leqq K^{\sum_{k=0}^{n-1}\varpi^{k}}t^{-\sigma}{}^{\sum_{k=0}^{n}%
\varpi^{k}}2^{\sigma\sum_{k=0}^{n}(k+1)\varpi^{k})}M^{\omega^{n+1}},
\]
implying (\ref{sil}) as $n\rightarrow\infty,$ since $\lim_{n\rightarrow\infty
}M^{\omega^{n+1}}=1.\medskip$
\end{proof}

Next we show the Moser's type property:\medskip

\begin{proof}
[Proof of Lemma \ref{prod}](i) Let $\alpha$ be any real such that $\alpha\geqq
r-1,$ and $v(.,s)\in L^{\alpha+1}(\Omega)$. From (\ref{mac}), $\int_{\Omega
}v^{\alpha+1}(t)dx$ is decreasing for $t>s$. And $\int_{\Omega}v^{\beta
m\theta}(.,\xi)dx$ is finite for almost any $\xi\in\left(  s,t\right)  ,$
hence for a sequence $\left(  \xi_{n}\right)  $ decreasing to $s,$
\[
\int_{\Omega}v^{\alpha+1}(.,t)dx+\frac{C_{0}(\alpha+1)}{\beta^{m}}\int%
_{\xi_{n}}^{t}(\int_{\Omega}v^{\beta m\theta}(.,\xi)dx)^{\frac{1}{\theta}}%
d\xi\leqq\int_{\Omega}v^{\alpha+1}(.,\xi_{n})dx\leqq\int_{\Omega}v^{\alpha
+1}(.,s)dx.
\]
From (\ref{ele}), there holds $\beta m\theta>r.$ Applying again (\ref{mac})
with $\beta m\theta-1$ instead of $\alpha,$ and $\xi_{n}$ instead of $s,$ we
deduce that $\int_{\Omega}v^{\beta m\theta}(t)dx$ is decreasing for $t>s,$
thus
\[
\int_{\Omega}v^{\alpha+1}(.,t)dx+\frac{C_{0}(\alpha+1)}{\beta^{m}}(t-\xi
_{n})(\int_{\Omega}v^{\beta m\theta}(.,\xi_{n})dx)^{^{\frac{1}{\theta}}}%
\leqq\int_{\Omega}v^{\alpha+1}(.,s)dx.
\]
As $n\rightarrow\infty,$ $v(.,\xi_{n})\rightarrow v(.,s)$ in $L_{loc}%
^{1}(\Omega),$ and after extraction, a.e. in $\Omega.$ Then from the Fatou
lemma,
\[
\int_{\Omega}v^{\alpha+1}(.,t)dx+\frac{C_{0}(\alpha+1)}{\beta^{m}}%
(t-s)(\int_{\Omega}v^{\beta m\theta}(.,s)dx)^{^{\frac{1}{\theta}}}\leqq
\int_{\Omega}v^{\alpha+1}(.,s)dx.
\]
Hence
\begin{equation}
\Vert v(t)\Vert_{L^{\beta m\theta}(\Omega)}^{\beta m\theta}\leqq\left(
\frac{\beta^{m}}{C_{0}(\alpha+1)}\frac{1}{t-s}\Vert v(s)\Vert_{L^{\alpha
+1}(\Omega)}^{\alpha+1}\right)  ^{\theta}, \label{1.6}%
\end{equation}

$\bullet$ \textbf{Case }$r>1.$We start from $s=0,$ we have $v_{0}\in
L^{r}(\Omega).$ We take\textbf{ } $\alpha_{0}=r-1,$ thus $\int_{\Omega
}v^{\alpha_{0}+1}(t)dx$ is finite, and set $\beta_{0}=1+\left(  \alpha
_{0}+\lambda\right)  /m.$ We define sequences $\left(  t_{n}\right)  ,\left(
\alpha_{n}\right)  ,\left(  r_{n}\right)  ,\left(  \beta_{n}\right)  ,$ by
$t_{0}=0,$ $r_{0}=r$ and for any $n\geqq1,$
\[
t_{n}=t(1-\frac{1}{2^{n}}),\qquad r_{n}=\alpha_{n}+1,\qquad\beta_{n}%
=1+\frac{\alpha_{n}+\lambda}{m},\qquad r_{n+1}=\beta_{n}m\theta=(r_{n}%
+\lambda+m-1)\theta,
\]
hence $\left(  r_{n}\right)  ,\left(  \beta_{n}\right)  $ are increasing,
since $r_{1}>r$ from (\ref{ele}). In (\ref{1.6}), we replace $s,t,\alpha,\beta
m\theta,$ by $t_{n},t_{n+1}r_{n},r_{n+1},$ and get
\begin{equation}
\Vert v(t_{n+1})\Vert_{L^{r_{n+1}}(\Omega)}\leqq\left(  \frac{1}{C_{0}%
(m\theta)^{m}}\frac{r_{n+1}^{m}}{r_{n}}\frac{1}{t_{n+1}-t_{n}}\right)
^{\frac{\theta}{r_{n+1}}}\Vert v(t_{n})\Vert_{L^{r_{n}}(\Omega)}^{\frac
{\theta.r_{n}}{r_{n+1}}}. \label{1.7}%
\end{equation}
From (\ref{mac}), it follows that
\begin{equation}
\Vert v(t)\Vert_{L^{r_{n+1}}(\Omega)}\leqq\Vert v(t_{n+1})\Vert_{L^{r_{n+1}%
}(\Omega)}\leqq I_{n}J_{n}L_{n}\Vert v_{0}\Vert_{L^{r}(\Omega)}^{\frac
{\theta^{n+1}.r}{r_{n+1}}}, \label{1.8}%
\end{equation}
where
\[
I_{n}=%
{\displaystyle\prod_{k=1}^{n+1}}
(\frac{r_{k}^{m}}{r_{k-1}})^{\frac{\theta^{n+2-k}}{r_{n+1}}},\quad J_{n}=%
{\displaystyle\prod_{k=1}^{n+1}}
\left(  \frac{1}{t_{k}-t_{k-1}}\right)  ^{\frac{\theta^{n+2-k}}{r_{n+1}}%
},\quad L_{n}=\left(  C_{0}(m\theta)^{m}\right)  ^{-\sum_{k=1}^{n+1}%
\frac{\theta^{n+2-k}}{r_{n+1}}}.
\]
Since $r_{n}=$ $\theta^{n}(r+(\lambda+m-1)\theta^{\prime}(1-\theta^{-n})),$ we
find
\begin{equation}
\lim_{n\rightarrow\infty}\frac{\theta^{n+1}r}{r_{n+1}}=\varpi_{r,m,\lambda
,\theta},\qquad\lim_{n\rightarrow\infty}\frac{1}{r_{n+1}}\sum_{k=1}%
^{n+1}\theta^{n+2-k}=\sigma_{r,m,\lambda,\theta},\qquad\lim_{n\rightarrow
\infty}\sum_{k=1}^{n+1}k\theta^{1-k}=\theta^{\prime2} \label{1.9}%
\end{equation}
As a consequence
\begin{equation}
\lim_{n\rightarrow\infty}J_{n}=2^{-\frac{\varpi_{r,m,\lambda,\theta}}{r}%
\theta^{\prime2}}t^{-\sigma_{r,m,\lambda,\theta}},\qquad\lim_{n\rightarrow
\infty}L_{n}=\left(  C_{0}(m\theta)^{m}\right)  ^{-\sigma_{r,m,\lambda,\theta
}}. \label{1.10}%
\end{equation}
Otherwise
\[
\ln I_{n}=\frac{m}{r_{n+1}}\sum_{k=1}^{n+1}\theta^{n+2-k}\ln r_{k}-\frac
{1}{r_{n+1}}\sum_{k=0}^{n}\theta^{n+1-k}\ln r_{k}=\frac{\theta^{n+1}}{r_{n+1}%
}(m\theta\sum_{k=1}^{n+1}\theta^{-k}\ln r_{k}-\sum_{k=0}^{n}\theta^{-k}\ln
r_{k})
\]
and the sum $S=\sum_{k=0}^{\infty}\theta^{-k}\ln r_{k}$ is finite, since
$r_{k}\leqq\theta^{k}(r+\left\vert \lambda+m-1\right\vert \theta^{\prime})$.
Then $I_{n}$ has a finite limit $\ell=\ell(N,m,r,\lambda,\theta)=\exp
(r^{-1}\varpi_{r,m,\lambda,\theta}((m\theta-1)S-m\theta\ln r)).$ Thus we can
go to the limit in (\ref{1.8}), and the conclusion follows.$\medskip$

$\bullet$ \textbf{Case }$r=1.$ If (H$_{1}$) holds we can take $\alpha
_{0}=r-1=0$ and the proof is done. Next assume (H$_{2}$). Then we obtain, for
any $0\leqq s<t<T,$ and a constant $C$ as before,
\begin{align*}
\Vert v(.,t)\Vert_{L^{\infty}(\Omega)}  &  \leqq C(t-s)^{-\sigma
_{\rho,m,\lambda,\theta}}\Vert v(.,s)\Vert_{L^{\rho}(\Omega)}^{\varpi
_{\rho,m,\lambda,\theta}}\\
&  \leqq C(t-s)^{-\sigma_{\rho,m,\lambda,\theta}}\Vert v(.,s)\Vert_{L^{\infty
}(\Omega)}^{\varpi_{\rho,m,\lambda,\theta}(\rho-1)/\rho}\Vert v(.,s)\Vert
_{L^{1}(\Omega)}^{\varpi_{\rho,m,\lambda,\theta}/\rho}\\
&  \leqq C\Vert v_{0}\Vert_{L^{1}(\Omega)}^{\varpi_{\rho,m,\lambda,\theta
}/\rho}(t-s)^{-\sigma_{\rho,m,\lambda,\theta}}\Vert v(.,s)\Vert_{L^{\infty
}(\Omega)}^{\varpi_{\rho,m,\lambda,\theta}(\rho-1)/\rho}%
\end{align*}
Let $y(t)=\Vert v(.,t)\Vert_{L^{\infty}(\Omega)}.$ We can apply Lemma
\ref{elem} to $y$, with
\[
\sigma=\sigma_{\rho,m,\lambda,\theta},\quad\quad\omega=\frac{\varpi
_{\rho,m,\lambda,\theta}}{\rho^{\prime}},\quad\quad K=C\Vert v_{0}\Vert
_{L^{1}(\Omega)}^{\varpi_{\rho,m,\lambda,\theta}/\rho},\quad\quad M=C\Vert
v_{0}\Vert_{L^{\rho}(\Omega)}^{\varpi_{\rho,m,\lambda,\theta/\rho}}.
\]
Indeed $\omega<1$ since $\rho\theta^{\prime}(1-m-\lambda)<1$. Then there
holds
\[
\Vert v(.,t)\Vert_{L^{\infty}(\Omega)}\leqq2^{\sigma(1-\omega)^{-2}%
}(Kt^{-\sigma})^{(1-\omega)^{-1}}=2^{\sigma(1-\omega)^{-2}}C^{(1-\omega)^{-1}%
}t^{-\sigma(1-\omega)^{-1}}\Vert v_{0}\Vert_{L^{1}(\Omega)}^{\varpi
_{\rho,q,\lambda,\theta}/\rho((1-\omega))}.
\]
Noticing that $\sigma(1-\omega)^{-1}=\sigma_{1,m,\lambda,\theta}$ and
$\varpi_{\rho,m,\lambda,\theta}/\rho((1-\omega))=\varpi_{1,m,\lambda,\theta}$,
we deduce
\begin{equation}
\Vert v(.,t)\Vert_{L^{\infty}(\Omega)}\leqq Ct^{-\sigma_{1,m,\lambda,\theta}%
}\Vert v_{0}\Vert_{L^{1}(\Omega)}^{\varpi_{1,m,\lambda,\theta}}, \label{blo}%
\end{equation}
with a new constant $C$, now depending on $\rho.$
\end{proof}

\begin{remark}
This lemma can be compared with the result of \cite[Theorem 2.1]{Por} obtained
by the Stampacchia's method. In order to obtain decay estimates for the
solutions $u$ of a parabolic equation such as (\ref{un}) or (\ref{eqge}), the
Moser's method consists to take as test functions powers $\left\vert
u\right\vert ^{\alpha-1}u$ of $u;$ the Stampacchia's method uses test
functions of the form $(u-k)^{+}\mathrm{sign}u.$ If one applies to
sufficiently smooth solutions, both techniques leed to decay estimates of the
same type. In the case of weaker solutions, the Stampacchia method supposes
that the functions $(u-k)^{+}$ are admissible in the equation, which leads to
assume that $u(.,t)\in W^{1,2}(\Omega),$ see \cite{Por}. In the sequel we
combine Moser's method with regularization or truncature of $u$, in order to
admit powers as test functions. So we do not need to make this assumption,
thus the Moser's method appears to be more performant.
\end{remark}

Such type of $L^{\infty}$ estimates as (\ref{trap}) may imply a universal one,
that means independent of the initial data, in case $\Omega$ is bounded. This
was observed for example in \cite{Ve}:

\begin{lemma}
\label{univ} Let $\Omega$ be bounded. (i) Let $v\in C(\left[  0,T\right)
;L_{loc}^{1}(\Omega))$ be nonnegative, and $v_{0}=v(x,0)\in L^{1}(\Omega),$
such that for some $C>0,$ for any $0\leqq s<t<T,$
\[
\Vert v(.,t)\Vert_{L^{\infty}(\Omega)}\leqq C(t-s)^{-\sigma}\Vert
v(.,s)\Vert_{L^{1}(\Omega)}^{\varpi},
\]
where $\sigma>0,\varpi\in(0,1).$ Then there exists $M=M(C,\sigma
,\varpi,\left\vert \Omega\right\vert )$ such that for any $t\in\left(
0,T\right)  ,$
\begin{equation}
\Vert v(.,t)\Vert_{L^{\infty}(\Omega)}\leqq Mt^{-\frac{\sigma}{1-\varpi}}.
\label{foul}%
\end{equation}
(ii) As a consequence, if $v$ satisfies (\ref{trap}), with $m-1+\lambda>0,$
then
\begin{equation}
\Vert v(.,t)\Vert_{L^{\infty}(\Omega)}\leqq Mt^{-\frac{1}{m-1+\lambda}}.
\label{fol}%
\end{equation}

\end{lemma}

\begin{proof}
(i) For any $0<s<t<T,$%
\[
\Vert v(.,t)\Vert_{L^{\infty}(\Omega)}\leqq C(t-s)^{-\sigma}\Vert
v(.,s)\Vert_{L^{r}(\Omega)}^{\varpi}\leqq C(t-s)^{-\sigma}\left\vert
\Omega\right\vert ^{\varpi}\Vert v(.,s)\Vert_{L^{\infty}(\Omega)}^{\varpi}%
\]
Since $\varpi<1,$ (\ref{foul}) follows from Lemma \ref{elem}: for any
$t\in(0,T),$
\[
\Vert v(.,t)\Vert_{L^{\infty}(\Omega)}\leqq2^{\sigma(1-\varpi)^{-2}%
}(C\left\vert \Omega\right\vert ^{\varpi}t^{-\sigma})^{(1-\varpi)^{-1}}.
\]
(ii) If $v$ satisfies (\ref{trap}), with $m-1+\lambda>0,$ we take
$\sigma=\sigma_{r,m,\lambda,\theta},$ and $\varpi=\varpi_{r,m,\lambda,\theta}$
defined at (\ref{jam}), then $\varpi=(1+(m-1+\lambda)\theta^{\prime}%
/r)^{-1}<1$ and $\sigma((1-\varpi)^{-1}=(m-1+\lambda)^{-1},$ which proves
(\ref{fol}).\medskip
\end{proof}

In the sequel Lemma \ref{prod} is applied in situations where (\ref{mac})
comes from an estimate of $v$ in a Sobolev Space $W^{1,m}(Q_{\Omega,s,t}),$
when $1<m<N,$ with $\theta=N/(N-m),$ or $m=N$ and $\theta>1$ is arbitrary.

In the case $m>N,$ where Lemma \ref{prod} does not bring information, we use
in the sequel a limit form of Gagliardo-Nirenberg inequality, see the proof of
Theorems \ref{bound}, \ref{deco} and \ref{Bound2}:

\begin{lemma}
\label{gani}Let $m>N,$ and $r\geqq1.$ Let $\Omega$ be any domain in
$\mathbb{R}^{N}.$ Then there exists $C=C(N,m,r)>0$ such that for any $w\in
L^{r}(\Omega)\cap W_{0}^{1,m}(\Omega),$
\[
\Vert w\Vert_{L^{\infty}(\Omega)}\leqq C\Vert\nabla w\Vert_{L^{m}(\Omega)}%
^{k}\Vert w\Vert_{L^{r}(\Omega)}^{1-k},\qquad\frac{1}{k}=1+r(\frac{1}{N}%
-\frac{1}{m}).
\]

\end{lemma}

\begin{proof}
By extension by $0$ outside of $\Omega,$ we can assume $\Omega=\mathbb{R}%
^{N}.$ Since $m>N,$ for any $\varphi\in\mathcal{D}\left(  \mathbb{R}%
^{N}\right)  ,$ and any $x\in\mathbb{R}^{N},$%
\[
\left\vert \varphi(x)\right\vert \leqq C(N,m)(\left\vert \int_{B(x,1)}\varphi
dx\right\vert +\Vert\nabla\varphi\Vert_{L^{m}(B(x,1))})\leqq C(N,m,r)\left(
\Vert\varphi\Vert_{L^{r}(\mathbb{R}^{N})}+\Vert\nabla\varphi\Vert
_{L^{m}(\mathbb{R}^{N})}\right)  ;
\]
by density, there holds
\[
\Vert w\Vert_{L^{\infty}(\mathbb{R}^{N})}\leqq C\left(  \Vert w\Vert
_{L^{r}(\mathbb{R}^{N})}+\Vert\nabla w\Vert_{L^{m}(\mathbb{R}^{N})}\right)
\]
for any $w\in L^{r}(\mathbb{R}^{N})\cap W^{1,m}(\mathbb{R}^{N}).$ Setting
$w_{t}(x)=w(tx)$ for any $t>0,$ we find
\[
\Vert w\Vert_{L^{\infty}(\mathbb{R}^{N})}=\Vert w_{t}\Vert_{L^{\infty
}(\mathbb{R}^{N})}\leqq C\left(  t^{-\frac{N}{r}}\Vert w\Vert_{L^{r}%
(\mathbb{R}^{N})}+t^{\frac{m-N}{m}}\Vert\nabla w\Vert_{L^{m}(\mathbb{R}^{N}%
)}\right)  ;
\]
the result follows by taking $t=(\Vert w\Vert_{L^{r}(\mathbb{R}^{N})}%
/\Vert\nabla w\Vert_{L^{m}(\mathbb{R}^{N})})^{1/(1-N/m+N/r)}$.
\end{proof}

\section{The Hamilton-Jacobi equation in $\mathbb{R}^{N}$\label{Rn}}

\subsection{Different notions of solution\label{diff}}

\noindent In this section we study the Cauchy problem (\ref{cau}).

Here we consider the solutions \textit{in a weak sense}, which does not use
any formulation in terms of semigroups:

\begin{definition}
\label{weaksol}We say that a \textit{nonnegative} function $u$ is a
\textbf{weak solution} (resp. subsolution) of equation of (\ref{un}) in
$Q_{\mathbb{R}^{N},T},$ if $u\in L_{loc}^{1}(Q_{\mathbb{R}^{N},T}),$ and
$|\nabla u|\in L_{loc}^{q}(Q_{\mathbb{R}^{N},T}),$ and
\begin{equation}
\int_{0}^{T}\int_{\Omega}(-u\varphi_{t}-u\Delta\varphi+|\nabla u|^{q}%
\varphi)dxdt=0,\quad(\text{resp.}\leqq),\quad\quad\forall\varphi\in
\mathcal{D}(Q_{\mathbb{R}^{N},T}),\varphi\geqq0. \label{for}%
\end{equation}

\end{definition}

\begin{remark}
From \cite{BiDao1}, any nonnegative weak solution satisfies
\begin{equation}
u\in L_{loc}^{\infty}(Q_{\mathbb{R}^{N},T}),\qquad\nabla u\in L_{loc}%
^{2}(Q_{\mathbb{R}^{N},T}),\qquad u\in C((0,T);L_{loc}^{\rho}(\mathbb{R}%
^{N}))\quad\forall\rho\geqq1. \label{2.1}%
\end{equation}
Hence (\ref{for}) is equivalent to:
\begin{equation}
\int_{0}^{T}\int_{\Omega}(-u\varphi_{t}+\nabla u.\nabla\varphi+|\nabla
u|^{q}\varphi)dxdt=0,\qquad\forall\varphi\in\mathcal{D}(Q_{\mathbb{R}^{N},T}),
\label{fur}%
\end{equation}
and there holds, for any $s,\tau\in(0,T),$
\begin{equation}
\int_{\mathbb{R}^{N}}u(.,\tau)\varphi(.,\theta)dx-\int_{\mathbb{R}^{N}%
}u(.,s)\varphi(.,s)dx+\int_{s}^{\tau}\int_{\mathbb{R}^{N}}(-u\varphi
_{t}+\nabla u.\nabla\varphi+|\nabla u|^{q}\varphi)dxdt=0;
\end{equation}
and for any $\psi\in$ $C_{c}^{2}\left(  \mathbb{R}^{N}\right)  ,$
\begin{equation}
\int_{\mathbb{R}^{N}}u(.,\tau)\psi dx-\int_{\mathbb{R}^{N}}u(.,s)\psi
dx+\int_{s}^{\tau}\int_{\mathbb{R}^{N}}(\nabla u.\nabla\psi+|\nabla u|^{q}\psi
dxdt=0.
\end{equation}

\end{remark}

\begin{definition}
\label{weakrloc}Let $u_{0}\in L_{loc}^{r}\left(  \mathbb{R}^{N}\right)
,r\geqq1.\medskip$

We say that $u$ is a \textbf{weak }$L_{loc}^{r}$\textbf{ solution }if $u$ is a
weak solution of (\ref{un}) and the extension of $u$ by $u_{0}$ at time $0$
satisfies $u\in C\left(  \left[  0,T\right)  ;L_{loc}^{r}(\mathbb{R}%
^{N}\right)  ).\medskip$

We say that $u$ is a \textbf{weak }$r$ \textbf{solution} of problem
(\ref{cau}) if it is a weak solution of equation (\ref{un}) such that
\begin{equation}
\lim_{t\rightarrow0}\int_{\mathbb{R}^{N}}u^{r}(.,t)\psi dx=\int_{\mathbb{R}%
^{N}}u_{0}^{r}\psi dx,\qquad\forall\psi\in C_{c}(\mathbb{R}^{N}). \label{int}%
\end{equation}

\end{definition}

\begin{definition}
\label{weakmloc}Let $u_{0}$ be any nonnegative Radon measure in $\mathbb{R}%
^{N},$ we say that $u$ is a \textbf{weak }$\mathcal{M}_{loc}$\textbf{ solution
}of problem (\ref{cau}) if it is a weak solution of (\ref{un}) such that
\textbf{ }
\begin{equation}
\lim_{t\rightarrow0}\int_{\mathbb{R}^{N}}u(.,t)\psi dx=\int_{\mathbb{R}^{N}%
}\psi du_{0},\qquad\forall\psi\in C_{c}(\mathbb{R}^{N}). \label{mea}%
\end{equation}

\end{definition}

\begin{remark}
\label{comp}Obviously, any weak $L_{loc}^{r}$ solution is a weak $r$ solution.
When $r=1,$ the notions of weak $1$-solution and weak $\mathcal{M}_{loc}$
solution coincide\textbf{. }When $r>1,$ it can be easily checked that $u$ is a
weak $L_{loc}^{r}$ solution if and only if it is a weak $r$ solution and
\begin{equation}
\lim_{t\rightarrow0}\int_{\mathbb{R}^{N}}u(.,t)\psi dx=\int_{\mathbb{R}^{N}%
}u_{0}\psi dx,\qquad\forall\psi\in C_{c}(\mathbb{R}^{N}). \label{fai}%
\end{equation}

\end{remark}

Other types of solutions using the semigroup of the heat equation have been
introduced in (\cite{BASoWe}):

\begin{definition}
\label{mildr}Let $u_{0}\in L^{r}\left(  \mathbb{R}^{N}\right)  .$ A function
$u$ is called \textbf{mild }$L^{r}$\textbf{ solution} of problem (\ref{cau})
if $u\in C(\left[  0,T\right)  ;L^{r}\left(  \mathbb{R}^{N}\right)  )$, and
$|\nabla u|^{q}\in L_{loc}^{1}(\left[  0,T\right)  ;L^{r}\left(
\mathbb{R}^{N}\right)  )$ and
\[
u(.,t)=e^{t\Delta}u_{0}-\int_{0}^{t}e^{(t-s)\Delta}|\nabla u(.,s)|^{q}%
ds\qquad\text{in }L^{r}(\mathbb{R}^{N});
\]
here $e^{t\Delta}$ is the semi-group of the heat equation acting on
$L^{r}\left(  \mathbb{R}^{N}\right)  .$
\end{definition}

\begin{definition}
\label{mildm}Let $u_{0}\in\mathcal{M}_{b}^{+}(\mathbb{R}^{N}).$ A function $u$
is called \textbf{mild }$\mathcal{M}$ \textbf{solution} of (\ref{cau}) if
$u\in C_{b}((0,T);L^{1}\left(  \mathbb{R}^{N}\right)  )$ and $|\nabla
u|^{q}\in L_{loc}^{1}(\left[  0,T\right)  ;L^{1}\left(  \mathbb{R}^{N}\right)
),$ and for any $0<t<T,$
\begin{equation}
u(.,t)=e^{t\Delta}u_{0}(.)-\int_{0}^{t}e^{(t-s)\Delta}|\nabla u(.,s)|^{q}%
ds\qquad\text{in }L^{1}(\mathbb{R}^{N}), \label{hc}%
\end{equation}
where $e^{t\Delta}$ is defined on $\mathcal{M}_{b}^{+}(\mathbb{R}^{N})$ as the
adjoint of the operator $e^{t\Delta}$ on $C_{0}(\mathbb{R}^{N}),$ the space of
continuous functions on $\mathbb{R}^{N}$ which tend to $0$ as $\left\vert
x\right\vert \rightarrow\infty.$
\end{definition}

\begin{remark}
Every mild $L^{r}$ solution is a weak $L_{loc}^{r}$ solution$.$ Any mild
$\mathcal{M}$ solution is a weak $\mathcal{M}_{loc}$ solution. Indeed for any
$0<\epsilon<t<T,$ we find
\[
u(.,t)=e^{(t-\epsilon)\Delta}u(.,\epsilon)-\int_{\epsilon}^{t}e^{(t-s)\Delta
}|\nabla u(.,s)|^{q}ds\qquad\text{in }L^{1}(\mathbb{R}^{N});
\]
and $u(.,\epsilon)\in L^{1}(\mathbb{R}^{N}),$ thus $u$ is a weak solution on
$\left(  \epsilon,T\right)  ,$ then on $(0,T).$ As $t\rightarrow0,$
$u(.,t)-e^{t\Delta}u_{0}(.)$ converges to $0$ in $L^{1}(\mathbb{R}^{N}),$ then
weakly *, and $e^{t\Delta}u_{0}(.)\rightarrow u_{0}$ weakly *, hence
(\ref{mea}) holds.
\end{remark}

Another definition of solution with initial data measure was given in
(\cite{BeLa99}):

\begin{definition}
\label{weaksemi}Let $u_{0}\in\mathcal{M}_{b}^{+}(\mathbb{R}^{N}).$ A function
$u$ is called \textbf{weak semi-group solution} if $u\in C((0,T);L^{1}\left(
\mathbb{R}^{N}\right)  )$ and $|\nabla u|^{q}\in L_{loc}^{1}(\left[
0,T\right)  ;L^{1}\left(  \mathbb{R}^{N}\right)  )$ and for any $0<\epsilon
<t<T,$
\begin{equation}
u(.,t)=e^{(t-\epsilon)\Delta}u(.,\epsilon)-\int_{\epsilon}^{t}e^{(t-s)\Delta
}|\nabla u(.,s)|^{q}ds\qquad\text{in }L^{1}(\mathbb{R}^{N}), \label{ha}%
\end{equation}
\begin{subequations}
\label{0}%
\begin{equation}
\lim_{t\rightarrow0}\int_{\mathbb{R}^{N}}u(.,t)\varphi dx=\int_{\mathbb{R}%
^{N}}\varphi du_{0},\qquad\forall\varphi\in C_{b}(\mathbb{R}^{N}), \label{hb}%
\end{equation}

\end{subequations}
\end{definition}

In fact the two definitions\textbf{ }coincide, see the proof in the Appendix:

\begin{lemma}
\label{lem} Let $u_{0}\in\mathcal{M}_{b}^{+}(\mathbb{R}^{N}).$ Then
\[
u\text{ is a mild }\mathcal{M}\text{ solution of (\ref{cau})}%
\Longleftrightarrow u\text{ is a weak semi-group solution of (\ref{cau}).}%
\]

\end{lemma}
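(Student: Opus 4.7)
The plan rests on two facts: the semigroup property $e^{(t-\epsilon)\Delta}\circ e^{\epsilon\Delta}=e^{t\Delta}$ on bounded measures, and the duality identity
\[
\int_{\mathbb{R}^N}(e^{\tau\Delta}\mu)\,\varphi\,dx=\int_{\mathbb{R}^N}(e^{\tau\Delta}\varphi)\,d\mu,\qquad \mu\in\mathcal{M}_b^+(\mathbb{R}^N),\ \varphi\in C_b(\mathbb{R}^N),\ \tau>0,
\]
obtained by Fubini from the Gauss kernel representation. Both directions will be reduced to writing the mild or weak semi-group identity in integrated form against suitable test functions and passing to the limit $\epsilon\to 0$.

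For the direct implication, assume $u$ is a mild $\mathcal{M}$ solution. First I would apply $e^{(t-\epsilon)\Delta}$ to (\ref{hc}) written at time $\epsilon$ and subtract it from (\ref{hc}) at time $t$; the semigroup property collapses the $u_0$ contribution, yielding (\ref{ha}). For (\ref{hb}), testing (\ref{hc}) against $\varphi\in C_b(\mathbb{R}^N)$ and using the duality identity gives
\[
\int_{\mathbb{R}^N}u(.,t)\varphi\,dx=\int_{\mathbb{R}^N}(e^{t\Delta}\varphi)\,du_0-\int_0^t\!\!\int_{\mathbb{R}^N}|\nabla u(.,s)|^q(e^{(t-s)\Delta}\varphi)\,dx\,ds.
\]
As $t\to 0$, the first term tends to $\int\varphi\,du_0$ by dominated convergence ($e^{t\Delta}\varphi\to\varphi$ pointwise and $|e^{t\Delta}\varphi|\leq\|\varphi\|_\infty$), while the Duhamel integral is bounded by $\|\varphi\|_\infty\int_0^t\||\nabla u(.,s)|^q\|_{L^1}\,ds\to 0$ by local integrability of $|\nabla u|^q$ on $[0,T)$.

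For the converse, assume $u$ is a weak semi-group solution and test (\ref{ha}) against $\psi\in C_c^\infty(\mathbb{R}^N)$ via the same duality, obtaining
\[
\int u(.,t)\psi\,dx=\int u(.,\epsilon)(e^{(t-\epsilon)\Delta}\psi)\,dx-\int_\epsilon^t\!\!\int|\nabla u(.,s)|^q(e^{(t-s)\Delta}\psi)\,dx\,ds.
\]
The main task is to let $\epsilon\to 0$. Since $e^{(t-\epsilon)\Delta}\psi\to e^{t\Delta}\psi$ uniformly on $\mathbb{R}^N$ (continuity of the heat semigroup on $C_0$) and $\|u(.,\epsilon)\|_{L^1}\to\int du_0$ by (\ref{hb}) applied to $\varphi\equiv 1$, replacing $e^{(t-\epsilon)\Delta}\psi$ by $e^{t\Delta}\psi$ introduces an error tending to $0$; then (\ref{hb}) applied to $e^{t\Delta}\psi\in C_b(\mathbb{R}^N)$ yields the limit $\int(e^{t\Delta}\psi)\,du_0=\int\psi\,d(e^{t\Delta}u_0)$. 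The Duhamel term passes to the limit by dominated convergence using $\|e^{(t-s)\Delta}\psi\|_\infty\leq\|\psi\|_\infty$ and $L^1_{\mathrm{loc}}$ integrability of $|\nabla u|^q$. Since both sides of the resulting identity lie in $L^1(\mathbb{R}^N)$ ($e^{t\Delta}u_0$ being the convolution of the bounded measure $u_0$ with the Gauss kernel), the identity against every $\psi\in C_c^\infty$ upgrades to an equality in $L^1$, proving (\ref{hc}); the local $L^1$-boundedness needed for $C_b$ regularity follows from the triangle inequality in (\ref{hc}) together with contractivity of $e^{\tau\Delta}$ on $\mathcal{M}_b$ and on $L^1$. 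The main obstacle is precisely this $\epsilon\to 0$ step: narrow convergence of $u(.,\epsilon)\,dx$ to $u_0$ against $C_b$ functions must be coupled with the uniform-in-$x$ convergence of the smoothed kernel $e^{(t-\epsilon)\Delta}\psi$ to $e^{t\Delta}\psi$, neither alone is sufficient and both are used simultaneously.
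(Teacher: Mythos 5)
Your proof is correct and follows essentially the same route as the paper's: both directions rest on the adjoint/duality identity for $e^{t\Delta}$ against $C_{b}$ test functions, the vanishing of the Duhamel term as $t\rightarrow0$ via $L_{loc}^{1}(\left[ 0,T\right) ;L^{1})$ integrability of $|\nabla u|^{q}$, and, for the converse, the same two-term splitting of $e^{(t-\epsilon)\Delta}u(.,\epsilon)-e^{t\Delta}u_{0}$ into a weak-$*$ piece handled by (\ref{hb}) applied to $e^{t\Delta}\varphi\in C_{b}(\mathbb{R}^{N})$ and a piece controlled by the $L^{1}$ bound on $u(.,\epsilon)$ times the uniform convergence of $(e^{(t-\epsilon)\Delta}-e^{t\Delta})\varphi$. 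The only differences are cosmetic: you test against $C_{c}^{\infty}$ and upgrade to an $L^{1}$ identity at the end, where the paper works in $\mathcal{S}^{\prime}(\mathbb{R}^{N})$ and extends to $C_{0}(\mathbb{R}^{N})$ by density, and you spell out the derivation of (\ref{ha}) that the paper dismisses as clear.
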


\begin{remark}
\label{all}All these definitions of semi-group solutions assume a global in
space condition: $|\nabla u|^{q}\in L_{loc}^{1}(\left[  0,T\right)
;L^{1}\left(  \mathbb{R}^{N}\right)  )$ or $|\nabla u|^{q}\in L_{loc}%
^{1}(\left[  0,T\right)  ;L^{r}\left(  \mathbb{R}^{N}\right)  ).$ Observe also
that (\ref{hb}) is assumed for any $\varphi\in C_{b}(\mathbb{R}^{N})$. On the
contrary, our definitions of weak\textbf{ }solutions are \textbf{local in
space and time,} they do not require such global properties.\bigskip
\end{remark}

Finally we mention another weaker form of semi-group solutions, given in
(\cite{BASoWe}), which will be used in the sequel:

\begin{definition}
\label{pointsol}Let $u_{0}\in\mathcal{M}_{b}^{+}(\mathbb{R}^{N}).$ Then $u$ is
\textit{a pointwise mild solution} of (\ref{cau}) if $u\in L_{loc}%
^{1}(Q_{\mathbb{R}^{N},T}),$ and $|\nabla u|^{q}\in L_{loc}^{1}(Q_{\mathbb{R}%
^{N},T}),$ and
\[
u(x,t)=(e^{t\Delta}u_{0})(x)-\int_{0}^{t}\int_{\mathbb{R}^{N}}%
g(x-y,t-s)|\nabla u(y,s)|^{q}dyds\qquad\text{for a.e. }(x,t)\in Q_{\mathbb{R}%
^{N},T},
\]
where $g$ is the heat kernel.
\end{definition}

\begin{remark}
For $r\geqq1,$ it is clear that every mild $L^{r}$ solution is a pointwise
mild solution. If $u_{0}\in L^{1}\left(  \mathbb{R}^{N}\right)  $ every
pointwise mild solution is a mild $L^{1}$ solution; if $u_{0}\in
\mathcal{M}_{b}^{+}(\mathbb{R}^{N})$, every pointwise mild solution, is a mild
$\mathcal{M}$ solution. see \cite[Proposition 1.1 and Remark 1.2]%
{BASoWe}.\medskip
\end{remark}

\subsection{ Decay of the norms}

\textbf{ }Next we show a \textit{decay result} for the solutions of Hamilton
Jacobi equations, which is valid for any $q>1,$ and for \textit{all the weak
solutions}, \textit{with no condition of boundedness at infinity}.\medskip

When $q\leqq2,$ any weak solution $u$ of equation (\ref{un}) is smooth: $u\in
C^{2,1}\left(  Q_{\mathbb{R}^{N},T}\right)  ,$ from \cite[Theorem 2.9]%
{BiDao1}. Since it may be false for $q>2,$ we regularize $u$ by convolution,
setting
\[
u_{\varepsilon}=u\ast\varrho_{\varepsilon},
\]
where $(\varrho_{\varepsilon})_{\varepsilon>0}$ is a sequence of mollifiers.
We recall that for given $0<s<\tau<T$, and $\varepsilon$ small enough,
$u_{\varepsilon}$ is a \textit{subsolution} of equation (\ref{un})$,$ see
\cite{BiDao1}:
\begin{equation}
(u_{\varepsilon})_{t}-\nu\Delta u_{\varepsilon}+|\nabla u_{\varepsilon}%
|^{q}\leqq0,\qquad\text{in }Q_{\mathbb{R}^{N},s,\tau}. \label{epsi}%
\end{equation}

\begin{theorem}
\label{decay}Assume $q>1.$ Let $r\geqq1.$ Let $u_{0}\in L^{r}(\mathbb{R}^{N})$
be nonnegative. Let $u$ be any non-negative weak $r$ solution of problem
(\ref{cau}).\medskip

\noindent(i) Then $u(.,t)\in L^{r}\left(  \mathbb{R}^{N}\right)  $ for any
$t\in\left(  0,T\right)  ,$ and%
\begin{equation}
\int_{\mathbb{R}^{N}}u^{r}(.,t)dx\leqq\int_{\mathbb{R}^{N}}u_{0}^{r}dx.
\label{ghi}%
\end{equation}
(ii) Moreover \textbf{ }$u^{r-1}|\nabla u|^{q}\in L_{loc}^{1}(\left[
0,T\right)  ;L^{1}\left(  \mathbb{R}^{N}\right)  );$ and $u^{r-2}|\nabla
u|^{2}\in L_{loc}^{1}(\left[  0,T\right)  ;L^{1}\left(  \mathbb{R}^{N}\right)
)$ if $r>1$ and $\nu>0.$ For any $t\in\left(  0,T\right)  ,$ we have the
\textbf{equalities}
\begin{equation}
\int_{\mathbb{R}^{N}}u^{r}(.,t)dx+r\int_{0}^{t}\int_{\mathbb{R}^{N}}%
u^{r-1}|\nabla u|^{q}dxdt+r(r-1)\nu\int_{0}^{t}\int_{\mathbb{R}^{N}}%
u^{r-2}|\nabla u|^{2}dxdt=\int_{\mathbb{R}^{N}}u_{0}^{r}dx,\qquad\text{if
}r>1, \label{egge}%
\end{equation}%
\begin{equation}
\int_{\mathbb{R}^{N}}u(.,t)dx+\int_{0}^{t}\int_{\mathbb{R}^{N}}|\nabla
u|^{q}dxdt=\int_{\mathbb{R}^{N}}u_{0}dx,\qquad\text{if }r=1, \label{agga}%
\end{equation}%
\begin{equation}
\lim_{t\rightarrow0}\int_{\mathbb{R}^{N}}u^{r}(.,t)dx=\int_{\mathbb{R}^{N}%
}u_{0}^{r}dx. \label{limu}%
\end{equation}
(iii) $u^{q-1+r}\in L_{loc}^{1}((\left[  0,T\right)  ;W^{1,1}\left(
\mathbb{R}^{N}\right)  );$ and if $\nu>0,$ then $u^{r/2}\in L_{loc}%
^{2}(\left[  0,T\right)  ;W^{1,2}\left(  \mathbb{R}^{N}\right)  ).$

\noindent(iv) If $u$ is a weak $L_{loc}^{r}$ solution, then $u\in C(\left[
0,T\right)  ;L^{r}\left(  \mathbb{R}^{N}\right)  ).$
\end{theorem}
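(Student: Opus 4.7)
I would base the proof on a testing of the equation by $u^{r-1}$ times a spatial cutoff, made rigorous by a mollification. Set $u_\varepsilon = u \ast \varrho_\varepsilon$; by (\ref{epsi}) this is a classical subsolution of the equation on $Q_{\mathbb{R}^N,s,\tau}$ for $\varepsilon$ small enough. Pick $\xi_R \in C_c^\infty(\mathbb{R}^N)$ with $\xi_R\equiv 1$ on $B_R$, $\mathrm{supp}\,\xi_R\subset B_{2R}$, and $|\nabla\xi_R|\le C/R$, and multiply the subsolution inequality by $u_\varepsilon^{r-1}\xi_R^2$ (by $\xi_R^2$ if $r=1$). Integration by parts produces the three nonnegative terms $\tfrac{1}{r}\int u_\varepsilon^r(\cdot,t)\xi_R^2$, $\nu(r-1)\int_s^t\!\int u_\varepsilon^{r-2}|\nabla u_\varepsilon|^2\xi_R^2$, and $\int_s^t\!\int u_\varepsilon^{r-1}|\nabla u_\varepsilon|^q\xi_R^2$, all bounded above by $\tfrac{1}{r}\int u_\varepsilon^r(\cdot,s)\xi_R^2$ plus a cross term $2\nu\int_s^t\!\int u_\varepsilon^{r-1}\xi_R \nabla u_\varepsilon\cdot\nabla\xi_R$; this cross term is absorbed by Young's inequality into half the dissipation, leaving a remainder of order $(C\nu/R^2)\int_s^t\!\int_{B_{2R}\setminus B_R} u_\varepsilon^r$.

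\textbf{Passage $\varepsilon\to 0$ and $s\to 0$.} The $\varepsilon$-limit follows from the local regularity (\ref{2.1}): $u\in L^\infty_{loc}$ and $\nabla u\in L^2_{loc}$ give strong local convergence of $u_\varepsilon$ and its gradient, Fatou applies to each nonnegative term, and dominated convergence handles the annular remainder. For the $s$-limit, the weak $r$-solution condition (\ref{int}) with $\psi=\xi_R^2\in C_c(\mathbb{R}^N)$ yields $\int u^r(\cdot,s)\xi_R^2 \to \int u_0^r \xi_R^2$. The resulting inequality is
\[
\tfrac{1}{r}\int u^r(\cdot,t)\xi_R^2 + \int_0^t\!\int u^{r-1}|\nabla u|^q \xi_R^2 + \tfrac{\nu(r-1)}{2}\int_0^t\!\int u^{r-2}|\nabla u|^2 \xi_R^2 \le \tfrac{1}{r}\int u_0^r \xi_R^2 + \tfrac{C\nu}{R^2}\int_0^t\!\int_{B_{2R}} u^r.
\]

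\textbf{Global integrability (the main obstacle).} Letting $R\to\infty$, the left-hand side passes by monotone convergence to the global integrals, but the remainder $(C\nu/R^2)\int_0^t\!\int_{B_{2R}} u^r$ is not a priori controlled, since no global integrability of $u$ has been assumed. My plan is to bootstrap the inequality itself: writing $M(R,t):=\sup_{\tau\le t}\int_{B_R} u^r(\cdot,\tau)$, it reads $M(R,t)\le \int u_0^r + (C\nu t/R^2)M(2R,t)$; iterating $n$ times produces
\[
M(R,t)\le \int u_0^r \sum_{k=0}^{n-1}(C\nu t/R^2)^k 4^{-k(k-1)/2} + (C\nu t/R^2)^n 4^{-n(n-1)/2} M(2^n R,t),
\]
and the double-exponential factor $4^{-n(n-1)/2}$ defeats any polynomial growth of $M(2^n R,t)$ afforded by the local boundedness of $u$. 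The convergent series tends to a finite limit as $R\to\infty$, yielding (\ref{ghi}) as well as the integrability and the sharper inequality of (ii). For $\nu=0$ no boundary term ever appears and the passage is immediate.

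\textbf{From inequality to identity; (iii)--(iv).} To upgrade to the identities (\ref{egge})--(\ref{agga}) I would repeat the computation testing the \emph{equation} itself (not just the subsolution) against $u^{r-1}\xi_R^2\chi(t)$, admissible because $u^{r-1}$ and its gradient $(r-1)u^{r-2}\nabla u$ lie locally in $L^2$ by (\ref{2.1}); combined with the previous $\le$ this gives equality for each $R$, and $R\to\infty$ is now routine using the already-established global $L^r$ bound. Then (\ref{limu}) follows by letting $t\to 0$ in (\ref{egge}), combining the $\limsup\le$ from monotonicity with the $\liminf\ge$ from (\ref{int}) and Fatou. Claim (iii) is a direct consequence of (ii) via Young's inequality, splitting $|\nabla u^{q+r-1}|=(q+r-1) u^{(r-1)/q}|\nabla u|\cdot u^{q+r-2-(r-1)/q}$ with the first factor in $L^q$ (from (ii)) and the second in $L^{q'}_{loc}$ (from $u\in L^\infty_{loc}$); the identity $|\nabla u^{r/2}|^2 = (r/2)^2 u^{r-2}|\nabla u|^2$ handles the second claim of (iii). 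Finally, (iv) follows from the monotonicity of $\|u(\cdot,t)\|_{L^r}$ in (i), the convergence of these norms through (\ref{limu}), and the hypothesis $u\in C([0,T);L^r_{loc})$: the standard criterion that $L^r$-norm convergence plus $L^r_{loc}$-convergence implies $L^r(\mathbb{R}^N)$-convergence closes the argument.
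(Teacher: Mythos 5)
There is a genuine gap at the step you yourself flag as ``the main obstacle.'' Your cutoff error comes from absorbing the cross term into the \emph{viscous} dissipation, which leaves a remainder $\frac{C\nu}{R^{2}}\int_{0}^{t}\!\int_{B_{2R}}u^{r}$ that is homogeneous of full degree $r$ in the unknown. The dyadic bootstrap $M(R,t)\leqq\int u_{0}^{r}+\frac{C\nu t}{R^{2}}M(2R,t)$ then closes only if the tail $\bigl(\frac{C\nu t}{R^{2}}\bigr)^{n}4^{-n(n-1)/2}M(2^{n}R,t)$ tends to $0$, i.e.\ only if $M(\rho,t)=o\bigl(e^{c(\log\rho)^{2}}\bigr)$. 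But no such growth bound is assumed or available: the hypotheses are purely local ($u\in L_{loc}^{1}$, $|\nabla u|\in L_{loc}^{q}$, and the interior regularity (\ref{2.1}) gives $L_{loc}^{\infty}$ with no control of the local bounds as $|x|\rightarrow\infty$), and the whole point of the theorem is that it holds \emph{without any condition at infinity}. Even exponential growth of $\int_{B_{\rho}}u^{r}$ in $\rho$ defeats the factor $4^{-n(n-1)/2}$, so ``polynomial growth afforded by local boundedness'' is not something you actually have. The paper closes this step differently: the cross term is absorbed by Young's inequality with exponents $q,q'$ into the \emph{gradient absorption} term $\int u^{r-1}|\nabla u|^{q}\xi^{\lambda}$, and with $\lambda=rq'$ the leftover $\int u^{r-1}\xi^{\lambda-q'}|\nabla\xi|^{q'}$ is estimated by H\"older as $\bigl(\int u^{r}\xi^{\lambda}\bigr)^{1/r'}\cdot CR^{(N-rq')/r}$ --- a strictly sublinear power of the controlled quantity times a solution-independent power of $R$ --- yielding $\frac{d}{dt}\bigl(\int u^{r}\xi^{\lambda}\bigr)^{1/r}\leqq CR^{N/r-q'}$, which vanishes as $R\rightarrow\infty$ when $rq'>N$. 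When $rq'\leqq N$ this still fails, and the paper needs a second, entirely separate ingredient you do not have: the substitution $w_{\eta}=\eta^{1/(k-1)}(u-\eta t)$ with $k\in(1,N/(N-r))$ and Kato's inequality, which turns $u$ into a subsolution with a smaller gradient exponent $k$ satisfying $rk'>N$, to which the first step applies.

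Two smaller points. First, your argument for (iii) splits $\nabla(u^{q+r-1})$ with the second factor only in $L_{loc}^{q'}$; raising it to the power $q'$ gives exactly $u^{q-1+r}$, whose \emph{global} integrability is what you are trying to prove, so as written you only obtain $W_{loc}^{1,1}$ in space, not $W^{1,1}(\mathbb{R}^{N})$; the paper gets the global statement from the Gagliardo--Nirenberg inequality applied to $v=u^{(q-1+r)/q}$ together with the global bound $\int\!\int u^{r-1}|\nabla u|^{q}<\infty$ from (ii). Second, for $r<2$ the test function $u^{r-1}$ is singular at $u=0$, which is why the paper works with $u_{\varepsilon}+\delta$ and lets $\delta\rightarrow0$; this is a technicality you should at least acknowledge.
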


\begin{proof}
(i) \textbf{ First step: case } $q^{\prime}>N/r.$ That means $r\geqq N$ or $q$
is small enough: $1<q<N/(N-r).$ Let $0<s<\tau<T$. Take $\varepsilon>0$ small
enough such that (\ref{epsi}) holds. Let $\delta>0,$ and $u_{\varepsilon
,\delta}=u_{\varepsilon}+\delta,$ so that $u_{\varepsilon,\delta}^{r-2}$ is
well defined for $r<2.$ For any $R>0,$ we consider $\xi(x)=\xi_{R}%
(x)=\psi(x/R),$ where $\psi(x)\in\left[  0,1\right]  ,\psi(x)=1$ for
$\left\vert x\right\vert \leqq1,\psi(x)=0$ for $\left\vert x\right\vert
\geqq2.$ Multiplying (\ref{epsi}) by $u_{\varepsilon,\delta}^{r-1}\xi
^{\lambda}$ where $\lambda>0$, we get for any $t\in\left[  s,\tau\right]  ,$
\begin{align*}
&  \frac{d}{dt}\left(  \frac{1}{r}\int_{\mathbb{R}^{N}}u_{\varepsilon,\delta
}^{r}(.,t)\xi^{\lambda}dx\right)  +(r-1)\nu\int_{\mathbb{R}^{N}}%
u_{\varepsilon,\delta}^{r-2}\left\vert \nabla u_{\varepsilon,\delta
}\right\vert ^{2}(.,t)\xi^{\lambda}dx+\int_{\mathbb{R}^{N}}|\nabla
u_{\varepsilon,\delta}|^{q}u_{\varepsilon,\delta}^{r-1}\xi^{\lambda-1}%
\xi^{\lambda}dx\\
&  \leqq-\lambda\int_{\mathbb{R}^{N}}u_{\varepsilon,\delta}^{r-1}\xi
^{\lambda-1}\xi^{\lambda-1}\nabla u_{\varepsilon,\delta}.\nabla\xi dx,
\end{align*}
and from the H\"{o}lder inequality, with $C=C(q,\lambda)$
\[
\lambda\int_{\mathbb{R}^{N}}u_{\varepsilon,\delta}^{r-1}\left\vert \nabla
u_{\varepsilon,\delta}\right\vert (.,t)\xi^{\lambda-1}\left\vert \nabla
\xi\right\vert dx\leqq\frac{1}{2}\int_{\mathbb{R}^{N}}|\nabla u_{\varepsilon
,\delta}(.,t)|^{q}u_{\varepsilon,\delta}^{r-1}\xi^{\lambda}dx+C\int%
_{\mathbb{R}^{N}}u_{\varepsilon,\delta}^{r-1}(.,t)\xi^{\lambda-q^{\prime}%
}\left\vert \nabla\xi\right\vert ^{q^{\prime}}dx,
\]%
\[
\int_{\mathbb{R}^{N}}u_{\varepsilon,\delta}^{r-1}(.,t)\xi^{\lambda-q^{\prime}%
}\left\vert \nabla\xi\right\vert ^{q^{\prime}}dx\leqq\left(  \int%
_{\mathbb{R}^{N}}u_{\varepsilon,\delta}^{r}(.,t)\xi^{\lambda}dx\right)
^{\frac{1}{r^{\prime}}}\left(  \int_{\mathbb{R}^{N}}\xi^{\lambda-rq^{\prime}%
}\left\vert \nabla\xi\right\vert ^{rq^{\prime}}dx\right)  ^{\frac{1}{r}}.
\]
Choosing $\lambda=rq^{\prime}$ we deduce
\[
\frac{d}{dt}\left(  \left(  \int_{\mathbb{R}^{N}}u_{\varepsilon,\delta}%
^{r}(.,t)\xi^{\lambda}dx\right)  ^{\frac{1}{r}}\right)  \leqq CR^{\frac{N}%
{r}-q^{\prime}},
\]
where $C=C(N,q,r,\psi).$ By integration,
\[
\left(  \int_{\mathbb{R}^{N}}u_{\varepsilon,\delta}^{r}(.,t)\xi^{\lambda
}dx\right)  ^{\frac{1}{r}}\leqq\left(  \int_{\mathbb{R}^{N}}u_{\varepsilon
,\delta}^{r}(.,s)\xi^{\lambda}dx\right)  ^{\frac{1}{r}}+C\tau R^{\frac{N}%
{r}-q^{\prime}}.
\]
with a new constant $C$ as above. Let $R_{0}>0$ be fixed and take $R>R_{0},$
thus
\[
\left(  \int_{B_{R_{0}}}u_{\varepsilon,\delta}^{r}(.,t)dx\right)  ^{\frac
{1}{r}}\leqq\left(  \int_{B_{2R}}u_{\varepsilon,\delta}^{r}(.,s)\xi^{\lambda
}dx\right)  ^{1/r}+C\tau R^{\frac{N}{r}-q^{\prime}}.
\]
As $\delta\rightarrow0,$ and then as $\varepsilon\rightarrow0,$ we deduce
that
\begin{equation}
\left(  \int_{B_{R_{0}}}u(.,t)^{r}dx\right)  ^{\frac{1}{r}}\leqq\left(
\int_{\mathbb{R}^{N}}u(.,s)^{r}\xi^{\lambda}dx\right)  ^{\frac{1}{r}}+C\tau
R^{\frac{N}{r}-q^{\prime}} \label{fis}%
\end{equation}
for any $0<s<t<T;$ from (\ref{int}) we obtain, as $s\rightarrow0$,
\[
\left(  \int_{B_{R_{0}}}u(.,t)^{r}dx\right)  ^{\frac{1}{r}}\leqq\left(
\int_{\mathbb{R}^{N}}u_{0}^{r}\xi^{\lambda}dx\right)  ^{\frac{1}{r}}+C\tau
R^{\frac{N}{r}-q^{\prime}}\leqq\left(  \int_{\mathbb{R}^{N}}u_{0}%
^{r}dx\right)  ^{\frac{1}{r}}+C\tau R^{\frac{N}{r}-q^{\prime}}.
\]
Finally (\ref{ghi}) follows as $R\rightarrow\infty$ and then as $R_{0}%
\rightarrow\infty$.$\medskip$

\textbf{Second step: case } $q^{\prime}\leqq N/r.$ Then $r<N$ and $q\geqq
N/(N-r)>1.$ Let us fix some $k\in\left(  1,N/(N-r)\right)  .$ For any $\eta
\in\left(  0,1\right)  $, we have $\eta|\nabla u|^{k}\leqq\eta+|\nabla
u|^{q},$ hence the function
\[
w_{\eta}=\eta^{1/(k-1)}(u-\eta t)
\]
satisfies
\[
(w_{\eta})_{t}-\nu\Delta w_{\eta}+|\nabla w_{\eta}|^{k}\leqq0
\]
in the weak sense. Thanks to Kato's inequality, see \cite{BrFr}, \cite{BaPi},
we deduce that
\begin{equation}
(w_{\eta}^{+})_{t}-\nu\Delta w_{\eta}^{+}+|\nabla w_{\eta}^{+}|^{k}\leqq0,
\label{II1.36}%
\end{equation}
in $\hspace{0.05in}\mathcal{D}^{\prime}(Q_{\mathbb{R}^{N},T}).$ And $w_{\eta
}^{+}$ has the same regularity as $u.$ Moreover it satisfies an analogous
property to (\ref{int}):
\begin{equation}
\lim_{t\rightarrow0}\int_{\mathbb{R}^{N}}(w_{\eta}^{+})^{r}(.,t)\psi
dx=\int_{\mathbb{R}^{N}}(\eta^{1/(k-1)}u_{0})^{r}\psi dx,\qquad\forall\psi\in
C_{c}(\mathbb{R}^{N}). \label{mol}%
\end{equation}
Indeed
\begin{align*}
\left\vert \int_{\mathbb{R}^{N}}((u-\eta t)^{+})^{r}-u^{r}(.,t))\psi
dx\right\vert  &  \leqq\int_{\left\{  u\geqq\eta t\right\}  }\left\vert
(u(.,t)-\eta t)^{r}-u^{r}(.,t)\right\vert \psi dx+\int_{\left\{  u\leqq\eta
t\right\}  }u^{r}(.,t))\psi dx\\
&  \leqq r\eta t\int_{\mathbb{R}^{N}}u^{r-1}(.,t)\psi dx+C(\psi)t^{r}\\
&  \leqq r\eta t(\int_{\mathbb{R}^{N}}u^{r}(.,t)\psi dx)^{1/r^{\prime}}%
(\int_{\mathbb{R}^{N}}\psi dx)^{1/r}+C(\psi)t^{r}%
\end{align*}
then
\[
\lim_{t\rightarrow0}\int_{\mathbb{R}^{N}}((u-\eta t)^{+})^{r}-u^{r}(.,t))\psi
dx=0,
\]
and (\ref{mol}) follows from (\ref{int}) applied to $\eta^{1/(k-1)}u.$ Since
$k^{\prime}>N/r,$ we can apply the first step to $w_{\eta}^{+};$ we deduce
that $w_{\eta}^{+}(t)\in L^{r}\left(  \mathbb{R}^{N}\right)  $ and
\[
\int_{\mathbb{R}^{N}}(w_{\eta}^{+})^{r}(.,t)dx\leqq\eta^{\frac{r}{k-1}}%
\int_{\mathbb{R}^{N}}u_{0}^{r}dx.
\]
Then $\left\Vert (u-\eta t)^{+}\right\Vert _{L^{r}\left(  \mathbb{R}%
^{N}\right)  }\leqq\left\Vert u_{0}\right\Vert _{L^{r}\left(  \mathbb{R}%
^{N}\right)  }.$ Since $u\leqq\eta t+(u-\eta t)^{+},$ we find, for any $R>0,$
\[
\left\Vert u(.,t)\right\Vert _{L^{r}\left(  B_{R}\right)  }\leqq\left\Vert
u_{0}\right\Vert _{L^{r}\left(  \mathbb{R}^{N}\right)  }+\eta t\left\vert
B_{R}\right\vert ^{\frac{1}{r}}%
\]
As $\eta\rightarrow0$ we get $\left\Vert u(.,t)\right\Vert _{L^{r}\left(
B_{R}\right)  }\leqq\left\Vert u_{0}\right\Vert _{L^{r}\left(  \mathbb{R}%
^{N}\right)  },$ then as $R\rightarrow\infty$ we deduce that $u(.,t)\in
L^{r}\left(  \mathbb{R}^{N}\right)  ,$ and (\ref{ghi}) holds$.\medskip$

(ii) Consider again $0<s<\tau<T$ and $u_{\varepsilon,\delta}$ as above.
Setting $F_{\varepsilon}=\left\vert \nabla u\right\vert ^{q}\ast
\varrho_{\varepsilon},$ there holds
\[
(u_{\varepsilon,\delta})_{t}-\nu\Delta u_{\varepsilon,\delta}+F_{\varepsilon
}=0.
\]
Then for any $t\in\left[  s,\tau\right]  ,$
\begin{align*}
&  \frac{d}{dt}\left(  \int_{\mathbb{R}^{N}}u_{\varepsilon,\delta}^{r}%
(.,t)\xi^{\lambda}dx\right)  +r(r-1)\nu\int_{\mathbb{R}^{N}}u_{\varepsilon
,\delta}^{r-2}\left\vert \nabla u_{\varepsilon,\delta}\right\vert ^{2}%
(.,t)\xi^{\lambda}dx+r\int_{\mathbb{R}^{N}}F_{\varepsilon}u_{\varepsilon
,\delta}^{r-1}(.,t)\xi^{\lambda}dx\\
&  =-r\nu\int_{\mathbb{R}^{N}}u_{\varepsilon,\delta}^{r-1}\nabla
u_{\varepsilon,\delta}(.,t).\nabla(\xi^{\lambda})dx=\nu\int_{\mathbb{R}^{N}%
}u_{\varepsilon,\delta}^{r}(.,t)\Delta(\xi^{\lambda})dx
\end{align*}
thus
\begin{subequations}
\begin{align}
&  \int_{\mathbb{R}^{N}}u_{\varepsilon,\delta}^{r}(.,t)\xi^{\lambda}%
dx+r\int_{s}^{t}\int_{\mathbb{R}^{N}}u_{\varepsilon,\delta}^{r-1}%
F_{\varepsilon}\xi^{\lambda}dxdt\nonumber\\
+r(r-1)\nu\int_{s}^{t}\int_{\mathbb{R}^{N}}u_{\varepsilon,\delta}%
^{r-2}\left\vert \nabla u_{\varepsilon,\delta}\right\vert ^{2}\xi^{\lambda}dx
&  =\int_{\mathbb{R}^{N}}u_{\varepsilon,\delta}^{r}(.,s)\xi^{\lambda}%
dx+\nu\int_{s}^{t}\int_{\mathbb{R}^{N}}u_{\varepsilon,\delta}^{r}\Delta
(\xi^{\lambda})dx\nonumber
\end{align}
First we go to the limit as $\varepsilon\rightarrow0,$ because $u\in
L_{loc}^{\infty}(Q_{\mathbb{R}^{N},T}),$ and $\left\vert \nabla u\right\vert
^{2}$ $\in L_{loc}^{1}(Q_{\mathbb{R}^{N},T}),$ and $F_{\varepsilon}$ converges
to $\left\vert \nabla u\right\vert ^{q}$ in $L_{loc}^{1}(Q_{\mathbb{R}^{N}%
,T}).$ Setting $v_{\delta}=u+\delta,$ we obtain for almost any $s,t,$ and by
continuity for any $0<s<t\leqq\tau$,
\end{subequations}
\begin{subequations}
\begin{align}
&  \int_{\mathbb{R}^{N}}v_{\delta}^{r}(.,t)\xi^{\lambda}dx+r\int_{s}^{t}%
\int_{\mathbb{R}^{N}}v_{\delta}^{r-1}\left\vert \nabla u\right\vert ^{q}\psi
dxdt\nonumber\\
+r(r-1)\nu\int_{s}^{t}\int_{\mathbb{R}^{N}}v_{\delta}^{r-2}\left\vert \nabla
u\right\vert ^{2}\xi^{\lambda}dx  &  =\int_{\mathbb{R}^{N}}v_{\delta}%
^{r}(.,s)\xi^{\lambda}dx+\nu\int_{s}^{t}\int_{\mathbb{R}^{N}}v_{\delta}%
^{r}\Delta(\xi^{\lambda})dx\nonumber
\end{align}
Next we go to the limit as $\delta\rightarrow0:$ from the Fatou Lemma,
$\int_{s}^{t}\int_{\mathbb{R}^{N}}u^{r-1}\left\vert \nabla u\right\vert
^{q}\psi dxdt$ and $(r-1)\nu\int_{s}^{t}\int_{\mathbb{R}^{N}}u^{r-2}\left\vert
\nabla u\right\vert ^{2}\xi^{\lambda}dx$ are finite, and then from the
dominated convergence theorem,%
\end{subequations}
\begin{align*}
&  \int_{\mathbb{R}^{N}}u^{r}(.,t)\xi^{\lambda}dx+r\int_{s}^{t}\int%
_{\mathbb{R}^{N}}u^{r-1}\left\vert \nabla u\right\vert ^{q}\xi^{\lambda}dxdt\\
+r(r-1)\nu\int_{s}^{t}\int_{\mathbb{R}^{N}}u^{r-2}\left\vert \nabla
u\right\vert ^{2}\xi^{\lambda}dx  &  =\int_{\mathbb{R}^{N}}u^{r}(.,\sigma
)\xi^{\lambda}dx+\nu\int_{s}^{t}\int_{\mathbb{R}^{N}}u^{r}\Delta(\xi^{\lambda
})dx.
\end{align*}
As $s\rightarrow0,$ from (\ref{int}), we deduce that
\begin{align*}
&  \int_{\mathbb{R}^{N}}u^{r}(.,t)\xi^{\lambda}dx+r\int_{0}^{t}\int%
_{\mathbb{R}^{N}}u^{r-1}\left\vert \nabla u\right\vert ^{q}\xi^{\lambda}dxdt\\
+r(r-1)\int_{0}^{t}\int_{\mathbb{R}^{N}}u^{r-2}\left\vert \nabla u\right\vert
^{2}\xi^{\lambda}dx  &  =\int_{\mathbb{R}^{N}}u_{0}^{r}(.,\sigma)\xi^{\lambda
}dx+\nu\int_{\sigma}^{t}\int_{\mathbb{R}^{N}}u^{r}\Delta(\xi^{\lambda})dx
\end{align*}
Now $u(.,t)\in L^{r}\left(  \mathbb{R}^{N}\right)  $ for any $t\in\left[
s,\tau\right]  $, and
\[
\int_{\sigma}^{t}\int_{\mathbb{R}^{N}}u^{r}\Delta(\xi^{\lambda})dx\leqq
\frac{C}{R^{2}}\tau\int_{\mathbb{R}^{N}}u^{r}(\sigma)dx,
\]
thus we can make $R\rightarrow\infty$. Then $\int_{0}^{\tau}\int%
_{\mathbb{R}^{N}}u^{r-1}|\nabla u|^{q}dxdt$ and $(r-1)\nu\int_{0}^{\tau}%
\int_{\mathbb{R}^{N}}u^{r-2}|\nabla u|^{2}dxdt$ are finite and, from the
dominated convergence theorem,
\begin{equation}
\int_{\mathbb{R}^{N}}u^{r}(.,t)dx+r\int_{0}^{t}\int_{\mathbb{R}^{N}}%
u^{r-1}|\nabla u|^{q}dxdt+r(r-1)\nu\int_{0}^{t}\int_{\mathbb{R}^{N}}%
u^{r-2}\left\vert \nabla u\right\vert ^{2}dx=\int_{\mathbb{R}^{N}}u_{0}^{r}dx
\label{lon}%
\end{equation}
Hence (\ref{egge}) and (\ref{agga}) follow, implying (\ref{limu}).\medskip

(iii) Setting $v=u^{b}$ with $b=(q-1+r)/q\leqq$ $r,$ there holds $|\nabla
v|^{q}\in L_{loc}^{1}(\left[  0,T\right)  ;L^{1}(\mathbb{R}^{N})),$ and $v\in
L^{\infty}((0,T);L^{r/b}\left(  \mathbb{R}^{N}\right)  ).$ From the
Gagliardo-Nirenberg inequality,
\begin{equation}
\left\Vert v(.,t)\right\Vert _{L^{q}\left(  \mathbb{R}^{N}\right)  }\leqq
C(N,q,r)\left\Vert v(.,t)\right\Vert _{L^{\frac{r}{b}}\left(  \mathbb{R}%
^{N}\right)  }^{1-k}\left\Vert \nabla v(.,t)\right\Vert _{L^{q}\left(
\mathbb{R}^{N}\right)  }^{k},\qquad\frac{1}{k}=1+\frac{rq^{\prime}}{N}.
\label{gal}%
\end{equation}
By integration, for any $0<\tau<T,$ we get, from H\"{o}lder inequality, with
$C=C((\tau,N,q,r),$
\[
\int_{0}^{\tau}\int_{\mathbb{R}^{N}}v^{q}(.,t)dxdt=\int_{0}^{\tau}%
\int_{\mathbb{R}^{N}}u^{q-1+r}(.,t)dxdt\leqq C\left\Vert v\right\Vert
_{L^{\infty}((0,\tau);L^{\frac{r}{m}}\left(  \mathbb{R}^{N}\right)  }%
^{(1-k)q}(\int_{0}^{\tau}\int_{\mathbb{R}^{N}}\left\vert \nabla v\right\vert
^{q}dxdt)^{k}.
\]
Then $u\in L^{q-1+r}(Q_{\mathbb{R}^{N},\tau}),$ and $v^{q}=u^{q-1+r}\in
L^{1}((0,\tau);W^{1,1}\left(  \mathbb{R}^{N}\right)  ),$ $v\in L^{q}%
((0,\tau);W^{1,q}\left(  \mathbb{R}^{N}\right)  )$. If $\nu>0,$ we also have
$u^{r-2}\left\vert \nabla u\right\vert ^{2}=\left\vert \nabla(u^{r/2}%
)\right\vert ^{2}\in L^{1}(Q_{\mathbb{R}^{N},\tau}),$ and $u^{r/2}\in
L^{2}(Q_{\mathbb{R}^{N},\tau}),$ then $u^{r/2}\in L^{2}((0,\tau);W^{1,2}%
\left(  \mathbb{R}^{N}\right)  ).$\medskip

(iv) Here $u\in C(\left[  0,T\right)  ;L_{loc}^{r}\left(  \mathbb{R}%
^{N}\right)  )$. We only need to prove that $\lim_{t\rightarrow0}\left\Vert
u(.,t)-u_{0}\right\Vert _{L^{r}\left(  \mathbb{R}^{N}\right)  }=0.$ From a
diagonal procedure, there exists $t_{n}\rightarrow0$ such that $\left(
u(.,t_{n})\right)  $ converges to $u_{0}$ a.e. in $\mathbb{R}^{N}.$ First
assume $r>1;$ since the convergence holds weakly in $L^{r}\left(
\mathbb{R}^{N}\right)  ,$ and $\lim_{n\rightarrow\infty}\left\Vert
u(.,t_{n})\right\Vert _{L^{r}\left(  \mathbb{R}^{N}\right)  }=\left\Vert
u_{0}\right\Vert _{L^{r}\left(  \mathbb{R}^{N}\right)  }$ from (\ref{limu}).
Then it holds from any sequence, and $u\in C(\left[  0,T\right)  ;L^{r}\left(
\mathbb{R}^{N}\right)  )$. Next assume $r=1.$ We have for any $p>0,$
\begin{align*}
\int_{\mathbb{R}^{N}}\left\vert u(t_{n})-u_{0}\right\vert dx  &  \leqq
\int_{B_{p}}\left\vert u(t_{n})-u_{0}\right\vert dx+\int_{\mathbb{R}%
^{N}\backslash B_{p}}\left\vert u(t_{n})-u_{0}\right\vert dx\\
&  \leqq\int_{B_{p}}\left\vert u(t_{n})-u_{0}\right\vert dx+\int%
_{\mathbb{R}^{N}\backslash B_{p}}u(t_{n})dx+\int_{\mathbb{R}^{N}\backslash
B_{p}}u_{0}dx\\
&  =\int_{B_{p}}\left\vert u(t_{n})-u_{0}\right\vert dx+\int_{\mathbb{R}^{N}%
}u(t_{n})dx-\int_{B_{p}}u_{0}dx\\
&  -\int_{B_{p}}(u(t_{n})-u_{0})dx+\int_{\mathbb{R}^{N}\backslash B_{p}}%
u_{0}dx\\
&  \leqq2\int_{B_{p}}\left\vert u(t_{n})-u_{0}\right\vert dx+\int%
_{\mathbb{R}^{N}}u(t_{n})dx-\int_{\mathbb{R}^{N}}u_{0}dx+2\int_{\mathbb{R}%
^{N}\backslash B_{p}}u_{0}dx.
\end{align*}
The result follows from (\ref{limu}), because $u_{0}\in L^{1}\left(
\mathbb{R}^{N}\right)  .\medskip$
\end{proof}

The decay result is also available for initial data measures, where we do not
assume that $q<(N+2)/(N+1):$

\begin{theorem}
\label{inmea}Assume $q>1.$ Let $u_{0}\in\mathcal{M}_{b}^{+}(\mathbb{R}^{N})$
and $u$ be any non-negative weak $\mathcal{M}_{loc}$ solution of equation
(\ref{cau}) in$\hspace{0.05in}Q_{\mathbb{R}^{N},T}.$ Then $u(.,t)\in
L^{1}\left(  \mathbb{R}^{N}\right)  $ for any $t>0,$ and%
\begin{equation}
\int_{\mathbb{R}^{N}}u(.,t)dx\leqq%
{\displaystyle\int_{\mathbb{R}^{N}}}
du_{0}. \label{mis}%
\end{equation}
Moreover $u\in C((0,T);L^{1}\left(  \mathbb{R}^{N}\right)  ),$ $|\nabla
u|^{q}\in L_{loc}^{1}(\left[  0,T\right)  ;L^{1}\left(  \mathbb{R}^{N}\right)
)$ and
\begin{equation}
\int_{\mathbb{R}^{N}}u(.,t)dx+\int_{0}^{t}\int_{\mathbb{R}^{N}}|\nabla
u|^{q}dxdt=\,%
{\displaystyle\int_{\mathbb{R}^{N}}}
du_{0}, \label{his}%
\end{equation}
and
\begin{equation}
\lim_{t\rightarrow0}\int_{\mathbb{R}^{N}}u(.,t)\varphi dx=%
{\displaystyle\int_{\mathbb{R}^{N}}}
\varphi du_{0},\qquad\forall\varphi\in C_{b}(\mathbb{R}^{N}). \label{ris}%
\end{equation}

\end{theorem}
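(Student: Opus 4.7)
The plan is to adapt the strategy of Theorem \ref{decay} to measure data, the main new ingredient being that the convergence of $u(.,\sigma)$ to $u_{0}$ is only weak$^{\ast}$ in the sense of (\ref{mea}). There are three ingredients: an a priori $L^{1}$ bound yielding (\ref{mis}), promotion of this inequality to the energy identity (\ref{his}), and extension of the initial trace to $C_{b}$ test functions together with the $L^{1}$ continuity.

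For the bound (\ref{mis}), I would first treat the subcritical range $q<N/(N-1)$, equivalently $q^{\prime}>N$. The cutoff computation in the first step of the proof of Theorem \ref{decay}, taken with $r=1$, $\alpha=0$, $\lambda=q^{\prime}$, applies directly: regularize $u$ by convolution to obtain $u_{\varepsilon}$ satisfying (\ref{epsi}), test against $\xi_{R}^{q^{\prime}}$ with $\xi_{R}(x)=\psi(x/R)$, use H\"{o}lder, integrate on $(\sigma,t)$, and let $\delta\rightarrow0,\varepsilon\rightarrow0$. This produces
\[
\int_{B_{R_{0}}}u(.,t)\,dx\leqq\int u(.,\sigma)\xi_{R}^{q^{\prime}}dx+CtR^{N-q^{\prime}}.
\]
Since $\xi_{R}^{q^{\prime}}\in C_{c}(\mathbb{R}^{N})$, the trace condition (\ref{mea}) gives $\int u(.,\sigma)\xi_{R}^{q^{\prime}}dx\rightarrow\int\xi_{R}^{q^{\prime}}du_{0}\leqq\int du_{0}$ as $\sigma\rightarrow0$; sending $R\rightarrow\infty$ and then $R_{0}\rightarrow\infty$ settles the subcritical case. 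For a general $q>1$, fix $k\in(1,N/(N-1))$ and use the Kato truncation employed in Theorem \ref{decay}: $w_{\eta}^{+}=\eta^{1/(k-1)}(u-\eta t)^{+}$ is a subsolution of $w_{t}-\Delta w+|\nabla w|^{k}\leqq0$, and $w_{\eta}^{+}(.,t)\rightarrow\eta^{1/(k-1)}u_{0}$ weak$^{\ast}$ since $\eta t$ vanishes against any $\psi\in C_{c}(\mathbb{R}^{N})$. Apply the subcritical step to $w_{\eta}^{+}$ and then let $\eta\rightarrow0$ to recover (\ref{mis}).

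Next I would establish (\ref{his}). Inserting $\xi_{R}^{q^{\prime}}\in C_{c}^{2}$ into the weak formulation on $(\sigma,t)$ yields
\[
\int u(.,t)\xi_{R}^{q^{\prime}}dx+\int_{\sigma}^{t}\!\!\int|\nabla u|^{q}\xi_{R}^{q^{\prime}}dxds=\int u(.,\sigma)\xi_{R}^{q^{\prime}}dx+\int_{\sigma}^{t}\!\!\int u\,\Delta(\xi_{R}^{q^{\prime}})dxds.
\]
I would let $\sigma\rightarrow0$ using (\ref{mea}), and then $R\rightarrow\infty$: the Laplacian remainder is controlled by $CR^{-2}t\int du_{0}$ thanks to (\ref{mis}), and monotone convergence handles the other two integrals, producing $|\nabla u|^{q}\in L_{loc}^{1}([0,T);L^{1}(\mathbb{R}^{N}))$ together with the equality (\ref{his}).

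Finally, (\ref{his}) entails $\int u(.,t)dx\rightarrow\int du_{0}$ as $t\rightarrow0$. To extend (\ref{ris}) from $C_{c}$ to $C_{b}$, split $\varphi=\varphi\xi_{R}+\varphi(1-\xi_{R})$: the first piece converges by (\ref{mea}), while the tail satisfies
\[
\Big|\int u(.,t)\varphi(1-\xi_{R})dx\Big|\leqq\|\varphi\|_{\infty}\Big(\int u(.,t)dx-\int u(.,t)\xi_{R}dx\Big),
\]
whose $\limsup$ as $t\rightarrow0$ equals $\|\varphi\|_{\infty}\int(1-\xi_{R})du_{0}$, which vanishes as $R\rightarrow\infty$. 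Continuity $u\in C((0,T);L^{1}(\mathbb{R}^{N}))$ then follows from the local continuity (\ref{2.1}) combined with uniform tightness on compact subintervals of $(0,T)$, itself a consequence of (\ref{his}) applied on arbitrary $(s,t)\subset(0,T)$. The most delicate point of the argument is the Kato truncation step: one must verify that $w_{\eta}^{+}$ is admissible as a weak $\mathcal{M}_{loc}$ subsolution of the $k$-exponent equation with the claimed weak$^{\ast}$ trace, and that the subcritical bound, which depends only on $\eta^{1/(k-1)}\int du_{0}$, passes cleanly through the limit $\eta\rightarrow0$.
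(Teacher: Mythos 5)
Your proof is correct and follows essentially the same route as the paper, which simply adapts the two-step argument of Theorem \ref{decay} (the cutoff estimate for $q^{\prime}>N$, then the Kato truncation $w_{\eta}^{+}$ for general $q$) with the weak$^{\ast}$ trace (\ref{mea}) in place of (\ref{int}), and then obtains (\ref{ris}) by the same splitting based on convergence of the total mass. You are in fact somewhat more explicit than the paper about the supercritical case and about checking that $w_{\eta}^{+}$ inherits the weak$^{\ast}$ initial trace.
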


\begin{proof}
If $q^{\prime}<N,$ we obtain in the same way (\ref{fis}) with $r=1$, and we go
to the limit as $s\rightarrow0$ from (\ref{mea}), then
\[
\int_{B_{R_{0}}}u(.,t)dx\leqq%
{\displaystyle\int_{\mathbb{R}^{N}}}
\xi^{\lambda}du_{0}+C\tau R^{N-q^{\prime}}\leqq%
{\displaystyle\int_{\mathbb{R}^{N}}}
du_{0}+C\tau R^{N-q^{\prime}}.
\]
Going to the limit as $R\rightarrow\infty,$ and then as $R_{0}\rightarrow
\infty,$ we deduce (\ref{mis}). If $q^{\prime}\geqq N,$ we proceed as in the
second step of Theorem \ref{decay}, and get again (\ref{mis}). Then
(\ref{his}) follows. And $u\in C((0,T);L^{1}\left(  \mathbb{R}^{N}\right)  ),$
from the dominated convergence theorem, because $u\in C((0,T);L_{loc}%
^{1}\left(  \mathbb{R}^{N}\right)  ),$ and $u\in L^{\infty}((0,T);L^{1}\left(
\mathbb{R}^{N}\right)  ).\medskip$

Let us show (\ref{ris}): let $\varphi\in C_{b}(\mathbb{R}^{N})$ be
nonnegative, we can assume that $\varphi$ takes its values in $\left[
0,1\right]  .$ Let $t_{n}\rightarrow0.$ We know that $\lim_{n\rightarrow
\infty}\int_{\mathbb{R}^{N}}u(.,t_{n})dx=%
{\displaystyle\int_{\mathbb{R}^{N}}}
du_{0}.$ Let $\psi_{p}\in\mathcal{D}(\mathbb{R}^{N})$ with values in $\left[
0,1\right]  ,$ $\psi_{p}(x)=1$ if $\left\vert x\right\vert \leqq p,$ $0$ if
$\left\vert x\right\vert \geqq2p.$ Then $\lim_{p\rightarrow\infty}%
{\displaystyle\int_{\mathbb{R}^{N}}}
(1-\psi_{p})du_{0}=0$, from the dominated convergence Theorem. Thus for any
$\eta>0,$ one can choose $p_{\eta}$ such that $%
{\displaystyle\int_{\mathbb{R}^{N}}}
(1-\psi_{p_{\eta}})du_{0}\leqq\eta$; and
\begin{align*}
&  \overline{\lim}\left\vert \int_{\mathbb{R}^{N}}u(.,t_{n})\varphi dx-%
{\displaystyle\int_{\mathbb{R}^{N}}}
\varphi du_{0}\right\vert \\
&  \leqq\lim_{n\rightarrow\infty}\left\vert \int_{\mathbb{R}^{N}}%
u(.,t_{n})\varphi\psi_{p_{\eta}}dx-%
{\displaystyle\int_{\mathbb{R}^{N}}}
\varphi\psi_{p_{\eta}}du_{0}\right\vert +%
{\displaystyle\int_{\mathbb{R}^{N}}}
\varphi(1-\psi_{p_{\eta}})du_{0}+\lim_{n\rightarrow\infty}\int_{\mathbb{R}%
^{N}}u(.,t_{n})\varphi(1-\psi_{p_{\eta}})dx\leqq\eta,
\end{align*}
hence the conclusion follows.\medskip
\end{proof}

\subsection{Regularizing effects}

Here we deduce of the decay estimates a regularizing effect \textit{without
any condition at}\textbf{ }$\infty,$ ending the proof of Theorem \ref{main}.

\begin{theorem}
\label{bound}Let $q>1.$ Let $r\geqq1$ and $u_{0}\in L^{r}(\mathbb{R}^{N})$.
Let $u$ be any non-negative weak $L_{loc}^{r}$ solution of problem (\ref{cau})
in$\hspace{0.05in}Q_{\mathbb{R}^{N},T}$ (\ref{int}). $\medskip$

Then $u(.,t)\in L^{\infty}(\mathbb{R}^{N})$ for any $t\in$ $(0,T)$ and $u$
satisfies the estimates (\ref{uinf}), where $\sigma_{r,q,N},\varpi_{r,q,N}$
are given by (\ref{sigr}).$\medskip$

Moreover if $\nu>0,$ then $u$ satifies the estimates (\ref{use}). If $u_{0}%
\in\mathcal{M}_{b}^{+}(\mathbb{R}^{N})$, the same results hold, where $\Vert
u_{0}\Vert_{L^{1}(\mathbb{R}^{N})}$ is replaced by $%
{\displaystyle\int_{\mathbb{R}^{N}}}
du_{0}.$
\end{theorem}

\begin{proof}
Since $u$ is a weak $L_{loc}^{r}$ solution, then $u\in C(\left[  0,T\right)
;L^{r}\left(  \mathbb{R}^{N}\right)  ),$ from Theorem \ref{decay}. Thus for
any $0\leqq s<T,$ $u$ is a weak $r$ solution in $Q_{\mathbb{R}^{N},s,T}$; and
$\int_{\mathbb{R}^{N}}u^{r}(s)dx<\infty$ with $r\geqq1$. For any $0<s\leqq
t<T,$ and any $\alpha\geqq r-1$ such that $\int_{\mathbb{R}^{N}}u^{\alpha
+1}(s)dx<\infty,$ we can apply Theorem \ref{decay} to $u$ starting at point
$s,$ because of (\ref{2.1}). Denoting $\beta=1+\alpha/q,$ we have
\begin{equation}
\frac{1}{\alpha+1}\int_{\mathbb{R}^{N}}u^{\alpha+1}(.,t)dx+\frac{1}{\beta^{q}%
}\int_{s}^{t}\int_{\mathbb{R}^{N}}|\nabla(u^{\beta})|^{q}dxdt\leqq\frac
{1}{\alpha+1}\int_{\mathbb{R}^{N}}u^{\alpha+1}(.,s)dx, \label{miss}%
\end{equation}
and $u^{\beta}(.,t)\in L^{q}(\mathbb{R}^{N})$ for almost any $t\in\left(
0,T\right)  $. \medskip

(i) Proof of (\ref{uinf}).\medskip\ 

First assume $q<N.$ From the Sobolev injection of $W^{1,q}\left(
\mathbb{R}^{N}\right)  $ into $L^{Nq/(N-q)}\left(  \mathbb{R}^{N}\right)  $,
there holds
\[
\frac{1}{\alpha+1}\int_{\mathbb{R}^{N}}u^{\alpha+1}(.,t)dx+\frac{C(N,q)}%
{\beta^{q}}\int_{s}^{t}(\int_{\mathbb{R}^{N}}u^{\beta\frac{Nq}{N-q}%
}(.,t)dx)^{\frac{N-q}{N}})dt\leqq\frac{1}{\alpha+1}\int_{\mathbb{R}^{N}%
}u^{\alpha+1}(.,s)dx;
\]
thus Lemma \ref{prod} applies with $m=q$ and $\theta=N/(N-q).$ We obtain
\[
\Vert u(.,t)\Vert_{L^{\infty}(\mathbb{R}^{N})}\leqq C(t-s)^{-\sigma_{r,q,N}%
}\Vert u(.,s)\Vert_{L^{r}(\mathbb{R}^{N})}^{\varpi_{r,q,N}},\qquad C=C(N,q,r),
\]
and deduce (\ref{uinf}) as $s$ goes to $0$.

If $q=N,$ we deduce (\ref{uinf}) from Lemma \ref{prod} with $\theta>1$
arbitrary, since $W^{1,N}\left(  \mathbb{R}^{N}\right)  \subset$ $L^{N\theta
}\left(  \mathbb{R}^{N}\right)  $.

Next assume $q>N$. We straight away obtain, for any $t\in\left(  0,T\right)
,$%
\begin{equation}
\int_{0}^{t}\int_{\mathbb{R}^{N}}|\nabla(u^{\beta})|^{q}dxdt\leqq\frac{1}%
{r}\int_{\mathbb{R}^{N}}u_{0}^{r}dx,\label{mic}%
\end{equation}
with $\beta=1+(r-1)/q.$ From the Sobolev injection $W^{1,q}\left(
\mathbb{R}^{N}\right)  \subset$ $L^{\infty}\left(  \mathbb{R}^{N}\right)  ,$
$u(.,s)\in L^{\infty}\left(  \mathbb{R}^{N}\right)  $ for almost any
$s\in(0,T),$ hence $u(.,s)\in L^{\rho}\left(  \mathbb{R}^{N}\right)  )$ for
any $\rho\geqq r,$ and $u\in C(\left[  s,t\right)  ,L^{\rho}\left(
\mathbb{R}^{N}\right)  )$ from (\ref{2.1}). In turn $u(.,t)\in L^{\infty
}\left(  \mathbb{R}^{N}\right)  $ for any $t\in(0,T)$ and $t\mapsto\left\Vert
u(.,t)\right\Vert _{L^{\infty}\left(  \mathbb{R}^{N}\right)  }$ is
nonincreasing, thus
\[
rt\left\Vert u(.,t)\right\Vert _{L^{\infty}\left(  \mathbb{R}^{N}\right)
}^{q+r-1}\leqq C(N,q)\int_{\mathbb{R}^{N}}u_{0}^{r}dx.
\]
This \textit{does not give the optimal estimate} (\ref{sigr}). However from
Lemma \ref{gani}, $v=u^{\beta}$ satisfies the Gagliardo-Nirenberg inequality,
for almost any $t\in(0,T),$
\[
\left\Vert v(.,t)\right\Vert _{L^{\infty}\left(  \mathbb{R}^{N}\right)  }\leqq
C\left\Vert v(.,t)\right\Vert _{L^{\frac{r}{\beta}}\left(  \mathbb{R}%
^{N}\right)  }^{1-k}\left\Vert \nabla v(.,t)\right\Vert _{L^{q}\left(
\mathbb{R}^{N}\right)  }^{k},
\]
where $1/k=1+(1/N-1/q)r/\beta$ and $C=C(N,q,r).$ Then
\[
\left\Vert u(.,t)\right\Vert _{L^{\infty}\left(  \mathbb{R}^{N}\right)
}^{\frac{\beta q}{k}}\leqq C\left\Vert u(.,t)\right\Vert _{L^{r}\left(
\mathbb{R}^{N}\right)  }^{\frac{\beta q(1-k)}{k}}\int_{\mathbb{R}^{N}}%
|\nabla(u^{\beta})|^{q}dxdt\leqq C\left\Vert u_{0}\right\Vert _{L^{r}\left(
\mathbb{R}^{N}\right)  }^{\frac{\beta q(1-k)}{k}}\int_{\mathbb{R}^{N}}%
|\nabla(u^{\beta})|^{q}dxdt.
\]
By integration, using (\ref{mic}), we find
\[
rt\left\Vert u(.,t)\right\Vert _{L^{\infty}\left(  \mathbb{R}^{N}\right)
}^{\frac{\beta q}{k}}\leqq C\left\Vert u_{0}\right\Vert _{L^{r}\left(
\mathbb{R}^{N}\right)  }^{\frac{\beta q(1-k)}{k}+r},
\]
which gives precisely (\ref{uinf}), since $k/\beta q=\sigma_{r,q,N}$ and
$(1-k)+kr/\beta q=\varpi_{r,q,N}.\medskip$

(ii) Proof of (\ref{use}).

First assume $N>2.$ For any $\alpha\geqq r-1$ such that $\int_{\mathbb{R}^{N}%
}u^{\alpha+1}(s)dx<\infty,$
\[
\frac{1}{\alpha+1}\int_{\mathbb{R}^{N}}u^{\alpha+1}(t)dx+\frac{\alpha}%
{\tilde{\beta}^{2}}\nu\int_{s}^{t}\int_{\mathbb{R}^{N}}\left\vert
\nabla(u^{\tilde{\beta}})\right\vert ^{2}dxdt\leqq\frac{1}{\alpha+1}%
\int_{\mathbb{R}^{N}}u^{\alpha+1}(s)dx
\]
where $\tilde{\beta}=(\alpha+1)/2;$ and $u^{\tilde{\beta}}\in L_{loc}%
^{2}((0,\tau);W^{1,2}\left(  \mathbb{R}^{N}\right)  ).$ From the Sobolev
injection of $W^{1,2}\left(  \mathbb{R}^{N}\right)  $ into $L^{2N/(N-2)}%
\left(  \mathbb{R}^{N}\right)  $, we get
\[
\frac{1}{\alpha+1}\int_{\mathbb{R}^{N}}u^{\alpha+1}(t)dx+\frac{\alpha
C(N)}{\tilde{\beta}^{2}}\nu\int_{s}^{t}(\int_{\mathbb{R}^{N}}u^{\tilde{\beta
}\frac{2N}{N-2}})^{\frac{N-2}{N}}dx\leqq\frac{1}{\alpha+1}\int_{\mathbb{R}%
^{N}}u^{\alpha+1}(s)dx.
\]
In case $r>1,$ Lemma \ref{prod} applies with $C_{0}=(r-1)C(N)\nu,$
$q=2,\theta=N/(N-2)$ and $\lambda=-1,$ $\tilde{\beta}=1+(\alpha-1)/2$, since
$r>N(1-2+1)/2;$ and (\ref{use}) follows.

\noindent In case $r=1,$ then $u\in C(\left[  0,T\right)  ;L^{1}%
(\mathbb{R}^{N}))\cap L_{loc}^{\infty}((0,T);L^{\infty}(\mathbb{R}^{N}))$
\textit{because of estimate} (\ref{uinf}). Hence $C(\left[  0,T\right)
;L^{\rho}(\mathbb{R}^{N}))$ for any $\rho>1,$ for example with $\rho=2,$ and
$\Vert u(.,t)\Vert_{L^{1}(\mathbb{R}^{N})}$ is nonincreasing, from Theorem
\ref{decay}. Therefore Lemma \ref{prod} applies on $\left(  \epsilon,t\right)
$ for $0<\epsilon<t<T:$
\[
\Vert u(.,t)\Vert_{L^{\infty}(\mathbb{R}^{N})}\leqq C(t-\epsilon)^{-\frac
{N}{2}}\Vert u(.,\epsilon)\Vert_{L^{1}(\mathbb{R}^{N})}\leqq C(t-\epsilon
)^{-\frac{N}{2}}\Vert u_{0}\Vert_{L^{1}(\mathbb{R}^{N})},
\]
with $C=C(N,q,r,\nu),$ hence (\ref{use}) follows as $\epsilon\rightarrow
0.\medskip$

If $N=2,$ we proceed as above to conclude. Next assume $N=1$. In case $r>1,$
there holds, for any $t\in(0,T),$
\[
4(r-1)\nu\int_{0}^{t}\int_{\mathbb{R}^{N}}|\nabla(u^{\frac{r}{2}}%
)|^{2}dxdt\leqq\int_{\mathbb{R}^{N}}u_{0}^{r}dx;
\]
and, from Lemma \ref{gani}, applied to $v=u^{r/2},$ with $m=2=1/k,$
\[
\left\Vert u(.,t)\right\Vert _{L^{\infty}\left(  \mathbb{R}\right)  }%
^{2r}\leqq C\left\Vert u(.,t)\right\Vert _{L^{r}\left(  \mathbb{R}\right)
}^{r}\int_{\mathbb{R}}|\nabla(u^{\frac{r}{2}})|^{2}dxdt\leqq C\left\Vert
u_{0}\right\Vert _{L^{r}\left(  \mathbb{R}^{N}\right)  }^{r}\int_{\mathbb{R}%
}|\nabla(u^{\frac{r}{2}})|^{2}dxdt;
\]
by integration, we get, with a new constant $C=C(r,\nu),$
\[
t\left\Vert u(.,t)\right\Vert _{L^{\infty}\left(  \mathbb{R}\right)  }%
^{2r}\leqq C\left\Vert u_{0}\right\Vert _{L^{r}\left(  \mathbb{R}\right)
}^{2r},
\]
which proves (\ref{use}). In case $r=1,$ taking $\rho=2$ as above, we obtain,
for any $0<\epsilon<t<T,$
\[
\Vert u(.,t)\Vert_{L^{\infty}(\mathbb{R})}\leqq C(\nu)(t-\epsilon)^{-\frac
{1}{4}}\Vert u(.,\epsilon)\Vert_{L^{2}(\mathbb{R})}\leqq C(\nu)(t-\epsilon
)^{-\frac{1}{4}}\Vert u(.,\epsilon)\Vert_{L^{\infty}(\mathbb{R})}^{\frac{1}%
{2}}\Vert u(.,\epsilon)\Vert_{L^{1}(\mathbb{R})}^{\frac{1}{2}}.
\]
From Lemma \ref{elem}, we deduce
\[
\Vert u(.,t)\Vert_{L^{\infty}(\mathbb{R})}\leqq C(\nu)(t-\epsilon)^{-\frac
{1}{2}}\Vert u_{0}\Vert_{L^{1}(\mathbb{R})},
\]
and we conclude as $\epsilon\rightarrow0.\medskip$

If $u_{0}\in\mathcal{M}_{b}^{+}(\mathbb{R}^{N}),$ we apply the estimates on
$\left(  \epsilon,T\right)  $ and go to the limit as $\epsilon\rightarrow0.$
\end{proof}

\begin{remark}
As a consequence, for any $k\geqq1,$ and for example $q\neq N,N\neq2,$%
\begin{equation}
\Vert u(.,t)\Vert_{L^{kr}(\mathbb{R}^{N})}\leqq Ct^{-\frac{\sigma_{r,q,N}%
}{k^{\prime}}}\Vert u_{0}\Vert_{L^{r}(\mathbb{R}^{N})}^{\frac{\varpi_{r,q,N}%
}{k^{\prime}}+\frac{1}{k}},\label{uk}%
\end{equation}%
\begin{equation}
\Vert u(.,t)\Vert_{L^{kr}(\mathbb{R}^{N})}\leqq Ct^{-\frac{N}{2rk^{\prime}}%
}\Vert u_{0}\Vert_{L^{r}(\mathbb{R}^{N})},\qquad\text{if }\nu>0.\label{uke}%
\end{equation}
Indeed it follows from (\ref{ghi}) and (\ref{uinf}), (\ref{use}) by
interpolation:
\[
\Vert u(.,t)\Vert_{L^{kr}(\mathbb{R}^{N})}\leqq\Vert u(.,t)\Vert_{_{L^{\infty
}(\mathbb{R}^{N})}}^{1/k^{\prime}}\Vert u(.,t)\Vert_{L^{r}(\mathbb{R}^{N}%
)}^{1/k}.
\]

\end{remark}

\begin{remark}
\label{q2}If $q\leqq2,$ then $u\in C^{2,1}\left(  Q_{\mathbb{R}^{N},T}\right)
$, thus \textbf{we do not need to introduce the regularization by
}$u_{\varepsilon};$ we only need to introduce $u+\delta,$ when $r>1$ and make
$\delta\rightarrow0.$
\end{remark}

\begin{remark}
\label{decreg}Up to now, the decay estimate (\ref{ghi}) and the $L^{\infty}$
estimate  of $u$ were proved for $u_{0}\in C_{b}\left(  \mathbb{R}^{N}\right)
\cap L^{r}\left(  \mathbb{R}^{N}\right)  ,$ and for the \textbf{unique bounded
solution} $u$ of problem (\ref{cau}), and based on the estimate (\ref{bela})
given in \cite[Theorem 5.6]{BASoWe}; indeed  from the classical inequality
\[
\Vert u(.,t)\Vert_{L^{\infty}(\mathbb{R}^{N})}\leqq C(N,r)\Vert\nabla
u(.,t)\Vert_{L^{\infty}(\mathbb{R}^{N})}^{\frac{N}{N+r}}\Vert u(.,t)\Vert
_{L^{r}(\mathbb{R}^{N})}^{\frac{r}{N+r}},
\]
and  (\ref{bela}), there holds, with $C=C(N,q,r),$%
\[
\Vert u(.,t)\Vert_{L^{\infty}(\mathbb{R}^{N})}\leqq Ct^{-\frac{N}{q(N+r)}%
}\Vert u(.,t)\Vert_{L^{\infty}(\mathbb{R}^{N})}^{\frac{N}{q(N+r)}}\Vert
u(.,t)\Vert_{L^{r}(\mathbb{R}^{N})}^{\frac{r}{N+r}},
\]%
\begin{equation}
\Vert u(.,t)\Vert_{L^{\infty}(\mathbb{R}^{N})}\leqq Ct^{-\sigma_{r,q,N}}\Vert
u_{0}\Vert_{L^{r}(\mathbb{R}^{N})}^{\varpi_{r,q,N}}.\label{vic}%
\end{equation}

\end{remark}

\subsection{Further estimates and convergence results for $q\leqq2.$}

Here we consider the case $1<q\leqq2.$ From the $L^{\infty}$ estimates above,
and the interior regularity of $u,$ we deduce new local estimates and
convergence results:

\begin{corollary}
\label{pass}Assume $1<q\leqq2.\medskip$

(i) Any nonnegative weak $L_{loc}^{r}$ solution (resp. $\mathcal{M}_{loc}$
solution) $u$ of problem (\ref{cau}) with initial data $u_{0}\in
L^{r}(\mathbb{R}^{N}),$ $r\geqq1$ (resp. $u_{0}\in\mathcal{M}_{b}%
^{+}(\mathbb{R}^{N})$) satisfies $u\in C^{2,1}(Q_{\mathbb{R}^{N},T})\cap
L_{loc}^{\infty}(\left(  0,T\right)  ;C_{b}(\mathbb{R}^{N})).\medskip$

(ii) Let $(u_{0,n})$ be any bounded sequence in $L^{r}(\mathbb{R}^{N}),$
$r\geqq1$ (resp. in $\mathcal{M}_{b}^{+}(\mathbb{R}^{N})).$ For any
$n\in\mathbb{N},$ let $u_{n}$ be any nonnegative weak $L_{loc}^{r}$ solution
(resp. $\mathcal{M}_{loc}$ solution) of problem (\ref{cau}) with initial data
$u_{0,n}.$ Then one can extract a subsequence converging in $C_{loc}%
^{2,1}(Q_{\mathbb{R}^{N},T})$ to a weak solution $u$ of (\ref{un}) in
$Q_{\mathbb{R}^{N},T}$.
\end{corollary}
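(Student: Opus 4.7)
The plan is to leverage the $L^{\infty}$ regularizing estimates of Theorems \ref{bound} and \ref{bmea}, combined with the interior $C^{2,1}$ regularity available when $q\leq 2$, to upgrade any weak solution to a classical solution with locally uniform bounds, and then to obtain compactness for sequences of such solutions.

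For part (i), I would argue as follows. Fix any $t_0\in(0,T)$. Theorems \ref{bound} and \ref{bmea} give $u(\cdot,t_0)\in L^{\infty}(\mathbb{R}^N)$ with an explicit quantitative bound depending only on $t_0$ and the norm of the initial data ($\|u_0\|_{L^r(\mathbb{R}^N)}$ or $\int du_0$). By Remark \ref{q2}, which invokes the interior regularity result \cite[Theorem 2.12]{BiDao1}, every weak solution of (\ref{un}) with $q\leq 2$ belongs to $C^{2,1}(Q_{\mathbb{R}^N,T})$; in particular $u(\cdot,t)$ is continuous in $x$ for each fixed $t$. Together with the global $L^{\infty}(\mathbb{R}^N)$ bound this yields $u(\cdot,t)\in C_b(\mathbb{R}^N)$. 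The bounds from Theorems \ref{bound} and \ref{bmea} are uniform on every compact subinterval of $(0,T)$, producing the membership $u\in L^{\infty}_{loc}((0,T);C_b(\mathbb{R}^N))$.

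For part (ii), the same $L^{\infty}$ estimates applied individually to each $u_n$ produce bounds that are \emph{uniform in $n$} on every compact subset of $(0,T)$, because the sequence $(u_{0,n})$ is bounded in $L^r(\mathbb{R}^N)$ (resp.\ in $\mathcal{M}_b^+(\mathbb{R}^N)$) and the right-hand sides of Theorems \ref{bound} and \ref{bmea} depend only on that norm. I would next invoke the interior regularity theory for quasilinear parabolic equations: a Bernstein-type gradient estimate of the form (\ref{bela}) (which is valid precisely because $q\leq 2$) yields a uniform local bound on $|\nabla u_n|$, after which the classical Schauder-type theory of Ladyzhenskaja--Solonnikov--Ural'ceva (or its DiBenedetto version) provides uniform interior $C^{2+\alpha,1+\alpha/2}$ estimates on every compact subset of $Q_{\mathbb{R}^N,T}$. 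By Arzel\`a--Ascoli, some subsequence of $(u_n)$ converges in $C^{2,1}_{loc}(Q_{\mathbb{R}^N,T})$ to a nonnegative function $u\in C^{2,1}$. Because each $u_n$ satisfies (\ref{un}) pointwise and all derivatives appearing in the equation converge locally uniformly, the limit $u$ is a classical, hence weak, solution of (\ref{un}), with $|\nabla u|\in L^q_{loc}$ automatically from the $C^{2,1}$ regularity.

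The step I expect to be the main obstacle is the \emph{uniform} interior $C^{2+\alpha,1+\alpha/2}$ estimate along the sequence. The qualitative $C^{2,1}$ regularity of any single weak solution is already guaranteed by \cite[Theorem 2.12]{BiDao1}, but here one needs the quantitative version depending only on the local $L^{\infty}$ bound. This is where the restriction $q\leq 2$ enters essentially: the gradient term $|\nabla u|^q$ has at most quadratic growth, so that the equation fits the framework of classical quasilinear parabolic theory and the Bernstein argument (as used already in \cite{Li}, \cite{BeLa99}, \cite{BeBALa}) can be localised to yield $n$-independent gradient bounds. Once these are in hand, Schauder estimates and compactness complete the proof straightforwardly.
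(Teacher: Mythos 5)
Your proof is correct and follows essentially the same route as the paper: uniform local $L^{\infty}$ bounds from Theorems \ref{bound} and \ref{bmea} (depending only on $\Vert u_{0}\Vert_{L^{r}}$ or $\int du_{0}$), then a quantitative interior $C^{2+\gamma,1+\gamma/2}$ estimate controlled by the local sup norm, then Arzel\`{a}--Ascoli and passage to the limit in the equation. The only difference is that the step you flag as the main obstacle --- the uniform interior Schauder-type bound $\left\Vert u\right\Vert _{C^{2+\gamma,1+\gamma/2}(Q_{B_{R},s,\tau})}\leqq C\,\Phi(\left\Vert u\right\Vert _{L^{\infty}(Q_{B_{2R},s/2,\tau})})$ --- is not re-derived in the paper via a localized Bernstein argument but is quoted directly from \cite[Theorem 2.16]{BiDao1}, which is exactly where the restriction $q\leqq2$ enters.
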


\begin{proof}
From \cite[Theorem 2.9]{BiDao1} there there exists $\gamma\in\left(
0,1\right)  $ such that for any nonnegative weak solution of equation
(\ref{un}) $u$ in $Q_{\mathbb{R}^{N},T}$ and any ball $B_{R}\subset
\mathbb{R}^{N},$ and $0<s<\tau<T,$%
\[
\left\Vert u\right\Vert _{C^{2+\gamma,1+\frac{\gamma}{2}}(Q_{B_{R},s,\tau}%
)}\leqq C\Phi(\left\Vert u\right\Vert _{L^{\infty}(Q_{B_{2R},\frac{s}{2},\tau
})}).
\]
where $C=C(N,q,R,s,\tau)$ and $\Phi$ is a continuous increasing function. From
estimates (\ref{uinf}), we deduce that $u\in L_{loc}^{\infty
}(\left(  0,T\right)  ;C_{b}(\mathbb{R}^{N}))$ and
\begin{equation}
\left\Vert u\right\Vert _{C^{2+\gamma,1+\frac{\gamma}{2}}(Q_{B_{R},s,\tau}%
)}\leqq C\Phi(\left\Vert u_{0}\right\Vert _{L^{r}(\mathbb{R}^{N})}%
),\qquad\text{(resp. }\left\Vert u\right\Vert _{C^{2+\gamma,1+\frac{\gamma}%
{2}}(Q_{B_{R},s,\tau})}\leqq C\Phi(\int_{\mathbb{R}^{N}}du_{0}) \label{gff}%
\end{equation}
and the conclusions follow.$\medskip$
\end{proof}

We also deduce global gradient estimates in $\mathbb{R}^{N}:$

\begin{corollary}
\label{grad}Assume $\nu>0,$ $1<q\leqq2.$ (i) Let $u_{0}\in L^{r}%
(\mathbb{R}^{N}),$ $r\geqq1.$ Then any weak $L_{loc}^{r}$ solution $u$ of
problem (\ref{cau}) satisfies for $q\neq N$
\begin{equation}
\Vert\nabla u(.,t)\Vert_{L^{\infty}(\mathbb{R}^{N})}\leqq Ct^{-\vartheta
_{r,q,N}}\Vert u_{0}\Vert_{L^{r}(\mathbb{R}^{N})}^{\varkappa_{_{r,q,N}}%
},\label{sic}%
\end{equation}%
\[
\vartheta_{r,q,N}=\frac{N+r}{rq+N(q-1)},\qquad\varkappa_{_{r,q,N}}=\frac
{r}{rq+N(q-1)};
\]
and $\left\vert \nabla u\right\vert ^{q}\in L_{loc}^{\infty}((0,T);L^{r}%
(\mathbb{R}^{N})),$ and
\begin{equation}
\int_{\mathbb{R}^{N}}\left\vert \nabla u(.,t)\right\vert ^{qr}dx\leqq
Ct^{-r(\frac{q}{2}+\sigma_{r,q,N}(q-1))}\Vert u_{0}\Vert_{L^{r}(\mathbb{R}%
^{N})}^{(1+\varpi_{r,q,N}(q-1))r}\label{vou}%
\end{equation}
where $\sigma_{r,q,N},$ $\varpi_{r,q,N}$ are defined at (\ref{sigr}), and
$C=C(N,q,r,\nu).$ For $N\neq2,$ then
\begin{equation}
\Vert\nabla u(.,t)\Vert_{L^{\infty}(\mathbb{R}^{N})}\leqq Ct^{-\frac{1}%
{q}(\frac{N}{2r}+1)}\Vert u_{0}\Vert_{L^{r}(\mathbb{R}^{N})}^{\frac{1}{q}%
};\label{sac}%
\end{equation}%
\begin{equation}
\int_{\mathbb{R}^{N}}\left\vert \nabla u(.,t)\right\vert ^{qr}dx\leqq
Ct^{-r(\frac{q}{2}+\frac{N}{2r}(q-1))}\Vert u_{0}\Vert_{L^{r}(\mathbb{R}^{N}%
)}^{qr}.\label{voi}%
\end{equation}
If $N=2$,  estimates hold up to an $\varepsilon>0.$ Moreover if $q<2,$ $u$ is
a pointwise mild solution.\medskip

(ii) Let $u_{0}\in\mathcal{M}_{b}^{+}(\mathbb{R}^{N}).$ Then any weak
$\mathcal{M}_{loc}$ solution of (\ref{cau}) satisfies the same estimates as in
case $r=1,$ with $\Vert u_{0}\Vert_{L^{r}(\mathbb{R}^{N})}$ replaced by $%
{\displaystyle\int_{\mathbb{R}^{N}}}
du_{0}$.
\end{corollary}

\begin{proof}
(i) Let $u_{0}\in L^{r}(\mathbb{R}^{N}),$ $r\geqq1.$ Then for any
$\epsilon>0,$ $u(.,\epsilon)\in C_{b}(\mathbb{R}^{N}),$ from Corollary
\ref{pass}. From \cite{GiGuKe}, $u$ is the unique solution $v$ such that $v\in
C^{2,1}\left(  \mathbb{R}^{N}\times\left(  \epsilon,T\right)  \right)  \cap
C_{b}\left(  \mathbb{R}^{N}\times\left[  \epsilon,T\right)  \right)  $, and
$v(.,\epsilon)=u(.,\epsilon)$; since $v\in C_{b}^{2}\left(  \mathbb{R}%
^{N}\times(\epsilon,T\right)  ),$ we deduce that $u\in C_{b}^{2}\left(
\mathbb{R}^{N}\times(0,T\right)  );$ and for any $\epsilon\leqq t<T,$
\[
\Vert u(.,t)\Vert_{L^{\infty}(\mathbb{R}^{N})}\leqq\Vert u(.,\epsilon
)\Vert_{L^{\infty}(\mathbb{R}^{N})},\qquad\Vert\nabla u(.,t)\Vert_{L^{\infty
}(\mathbb{R}^{N})}\leqq\Vert\nabla u(.,\epsilon)\Vert_{L^{\infty}%
(\mathbb{R}^{N})},
\]
and from (\ref{bela}),
\begin{equation}
\left\vert \nabla u(.,t)\right\vert ^{q}\leqq C(q)(t-\epsilon)^{-1}%
u(.,t),\qquad\text{a.e. in }\mathbb{R}^{N}.\label{lis}%
\end{equation}
From the decay estimates, we also have $\Vert u(.,\epsilon)\Vert
_{L^{r}(\mathbb{R}^{N})}\leqq\Vert u_{0}\Vert_{L^{r}(\mathbb{R}^{N})}.$ And
$u(.,\epsilon)\in L^{\tilde{r}}(\mathbb{R}^{N})$ for any $\tilde{r}\in\left[
r,\infty\right]  ,$ and $u\in C(\left[  \epsilon,T\right)  ;L^{\tilde{r}%
}(\mathbb{R}^{N})).$ Going to the limit in (\ref{lis}) as $\epsilon
\rightarrow0,$ we deduce (\ref{sic}) from (\ref{uinf}), and (\ref{sac}) from
(\ref{use}), if $q\neq N$ or $N\neq2$. Moreover $\left\vert \nabla
u\right\vert ^{q}\in L_{loc}^{\infty}((0,T);L^{r}(\mathbb{R}^{N})),$ since
\[
\Vert\nabla u(.,t)\Vert_{L^{qr}(\mathbb{R}^{N})}\leqq C(q)t^{-\frac{1}{q}%
}\Vert u_{0}\Vert_{L^{r}(\mathbb{R}^{N})}^{\frac{1}{q}}.
\]
More precisely we get from estimate (\ref{cor}),
\[
\Vert\nabla(u^{\frac{1}{q^{\prime}}}(.,t)\Vert_{L^{\infty}(\mathbb{R}^{N}%
)}\leqq C(t-\epsilon)^{-\frac{1}{2}}\Vert u(.,\epsilon)\Vert_{L^{\infty
}(\mathbb{R}^{N})}^{\frac{1}{q^{\prime}}}%
\]
with $C=C(q,\nu);$ then from estimate (\ref{use}), for any $t\in\left(
0,T\right)  ,$ with other constants $C=C(q,\nu),$%
\[
\Vert\nabla(u^{\frac{1}{q^{\prime}}}(.,t)\Vert_{L^{\infty}(\mathbb{R}^{N}%
)}\leqq Ct^{-\frac{1}{2}}\Vert u(.,\frac{t}{2})\Vert_{L^{\infty}%
(\mathbb{R}^{N})}^{\frac{1}{q^{\prime}}}%
\]%
\[
\left\vert \nabla u(.,t)\right\vert ^{q}\leqq Ct^{-\frac{q}{2}}\Vert
u(.,\frac{t}{2})\Vert_{L^{\infty}(\mathbb{R}^{N})}^{q-1}u(.,t),
\]
then from estimate (\ref{uinf}) we get
\[
\int_{\mathbb{R}^{N}}\left\vert \nabla u(.,t)\right\vert ^{qr}dx\leqq C\Vert
u_{0}\Vert_{L^{r}(\mathbb{R}^{N})}^{\varpi_{r,q,N}(q-1)r}t^{-r(\frac{q}%
{2}+\sigma_{r,q,N}(q-1))}\int_{\mathbb{R}^{N}}u(.,t)^{r}dx;
\]
then (\ref{vou}) follows. And (\ref{voi}) follows from (\ref{use}). If  $N=2$,
in particular if $q=N,$ the same estimates hold  up to an $\varepsilon>0,$
from (\ref{uinf}) and  (\ref{use}).\medskip\ 

Next we prove that $u$ is a pointwise mild solution as $q<2.$ From
\cite[Theorem 6]{GiGuKe}, $u(.,t)\in C_{b}^{2}(\mathbb{R}^{N})$ for any
$t\in\left(  \epsilon,T\right)  ,$ in particular $u(.,2\epsilon)\in C_{b}%
^{2}(\mathbb{R}^{N}),$ then for any $t\geqq\epsilon,$ and any $x\in
\mathbb{R}^{N},$
\begin{equation}
u(x,t)=e^{(t-2\epsilon)\Delta}u(x,2\epsilon)-\int_{2\epsilon}^{t}%
\int_{\mathbb{R}^{N}}g(x-y,t-s)|\nabla u(y,s)|^{q}dyds, \label{brou}%
\end{equation}
see for example \cite[Proposition 4.2 ]{BeBALa}. But $u(x,2\epsilon)$
converges to $u_{0}$ in $L^{r}(\mathbb{R}^{N}),$ and then $e^{(t-2\epsilon
)\Delta}u(.,\epsilon)$ converges to $e^{t\Delta}u_{0}$ in $L^{r}%
(\mathbb{R}^{N}).$ Then we can go to the limit as $\epsilon\rightarrow0$ in
(\ref{brou}), for a.e. $x\in\mathbb{R}^{N}:$ the integral is convergent, then
the conclusion follows.\medskip

(ii) For Theorem \ref{inmea}, we have $u(.,t)\in L^{1}(\mathbb{R}^{N})$ for
$t\geqq\epsilon>0,$ which gives from (i)
\[
\Vert u(.,t)\Vert_{L^{\infty}(\mathbb{R}^{N})}\leqq C(t-\varepsilon
)^{-\sigma_{1,q,N}}\Vert u(.,\epsilon)\Vert_{L^{1}(\mathbb{R}^{N})}%
^{\varpi_{1,q,N}}\leqq C(t-\varepsilon)^{-\sigma_{1,q,N}}(%
{\displaystyle\int_{\mathbb{R}^{N}}}
du_{0})^{\varpi_{1,q}}.
\]
As $\epsilon\rightarrow0,$ we obtain (\ref{sic}), (\ref{sac}), (\ref{vou}) and
(\ref{voi}) hold with $r=1$ and $\Vert u_{0}\Vert_{L^{1}(\mathbb{R}^{N})}$
replaced by $%
{\displaystyle\int_{\mathbb{R}^{N}}}
du_{0}.$ And
\[
\Vert\nabla u(.,t)\Vert_{L^{q}(\mathbb{R}^{N})}\leqq Ct^{-\frac{1}{q}}(%
{\displaystyle\int_{\mathbb{R}^{N}}}
du_{0})^{\frac{1}{q}},
\]
thus $\left\vert \nabla u\right\vert ^{q}\in L_{loc}^{\infty}((0,T);L^{1}%
(\mathbb{R}^{N})).$
\end{proof}

\begin{remark}
As a consequence, under the assumptions of Corollary \ref{grad}, there holds
$u(.,t)\in C_{b}^{1}(\mathbb{R}^{N}),$ for any $t\in\left(  0,T\right)  ,$
then $u$ can be extended to a global solution of problem (\ref{cau}) on
$Q_{\mathbb{R}^{N},\infty},$ see for example \cite{SoZh}. 
\end{remark}

\subsection{Existence and uniqueness results for $q\leq2$}

Let $u_{0}\in L^{r}(\mathbb{R}^{N}),$ $r\geqq1.$ We first consider the
"subcritical" case
\begin{equation}
1<q<\frac{N+2r}{N+r},\qquad\text{equivalently \qquad}q<2\text{ and }%
r>\frac{N(q-1)}{2-q}. \label{subi}%
\end{equation}

\begin{theorem}
\label{val}Let $u_{0}\in L^{r}(\mathbb{R}^{N}),$ $r\geqq1.$ Suppose
(\ref{subi}), and $\nu>0$. Then any weak $L_{loc}^{r}$ solution $u$ of problem
(\ref{cau}) satisfies
\begin{equation}
\left\vert \nabla u\right\vert ^{q}\in L_{loc}^{1}(\left[  0,T\right)
;L^{r}(\mathbb{R}^{N})). \label{dim}%
\end{equation}
And
\[
u\text{ is a weak }L_{loc}^{r}\text{ solution}\Longleftrightarrow u\text{ is a
mild }L^{r}\text{ solution.}%
\]

\end{theorem}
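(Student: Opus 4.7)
The plan is to derive both assertions by combining the gradient estimate (\ref{vou}) of Corollary \ref{grad} with Theorem \ref{decay}(iv). The mild $\Rightarrow$ weak $L^{r}_{loc}$ direction of the equivalence is already recorded in Section \ref{diff}, so only the converse remains to be handled after (\ref{dim}) is proved.

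For (\ref{dim}), Corollary \ref{grad}(i) gives
$$\bigl\||\nabla u(.,t)|^{q}\bigr\|_{L^{r}(\mathbb{R}^{N})}\leq C\,t^{-(q/2+\sigma_{r,q}(q-1))}\|u_{0}\|_{L^{r}(\mathbb{R}^{N})}^{1+\varpi_{r,q}(q-1)}.$$
Local integrability of this bound in $t$ up to $t=0$ amounts to showing $q/2+\sigma_{r,q}(q-1)<1$. Using $\sigma_{r,q}=N/(rq+N(q-1))$, a direct algebraic manipulation reduces this to $N(q-1)<r(2-q)$, which is exactly the subcritical assumption $q<(N+2r)/(N+r)$. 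This gives (\ref{dim}).

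For the converse of the equivalence, let $u$ be a weak $L^{r}_{loc}$ solution. Since $1<q\leq 2$, Corollary \ref{pass}(i) supplies $u\in C^{2,1}(Q_{\mathbb{R}^{N},T})\cap L^{\infty}_{loc}((0,T);C_{b}(\mathbb{R}^{N}))$. Hence for any $\epsilon>0$ and $t>2\epsilon$, the pointwise representation (\ref{brou}) of Corollary \ref{grad} applies:
$$u(x,t)=\bigl(e^{(t-2\epsilon)\Delta}u(.,2\epsilon)\bigr)(x)-\int_{2\epsilon}^{t}\!\!\int_{\mathbb{R}^{N}}g(x-y,t-s)|\nabla u(y,s)|^{q}\,dy\,ds.$$
By Theorem \ref{decay}(iv), $u\in C([0,T);L^{r}(\mathbb{R}^{N}))$, so $u(.,2\epsilon)\to u_{0}$ in $L^{r}$, and the $L^{r}$-contractivity of the heat semigroup yields $e^{(t-2\epsilon)\Delta}u(.,2\epsilon)\to e^{t\Delta}u_{0}$ in $L^{r}(\mathbb{R}^{N})$. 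For the integral, (\ref{dim}) together with heat semigroup contractivity implies
$$\int_{2\epsilon}^{t}e^{(t-s)\Delta}|\nabla u(.,s)|^{q}ds\longrightarrow\int_{0}^{t}e^{(t-s)\Delta}|\nabla u(.,s)|^{q}ds\qquad\text{in }L^{r}(\mathbb{R}^{N}).$$
Passing to the limit as $\epsilon\to 0$ therefore gives the Duhamel identity in $L^{r}$. Combined with $u\in C([0,T);L^{r}(\mathbb{R}^{N}))$ from Theorem \ref{decay}(iv) and the membership $|\nabla u|^{q}\in L^{1}_{loc}([0,T);L^{r}(\mathbb{R}^{N}))$ from (\ref{dim}), this identifies $u$ as a mild $L^{r}$ solution.

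The only nonroutine point is the algebraic verification that the subcritical condition is exactly what is needed to make the time exponent $-(q/2+\sigma_{r,q}(q-1))$ integrable near $0$; after that, everything else is a standard passage to the limit in the Duhamel formula produced by Corollary \ref{grad}.
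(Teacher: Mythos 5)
Your proposal is correct and follows essentially the same route as the paper: establish (\ref{dim}) from the gradient bound (\ref{vou}) by checking that the time exponent $q/2+\sigma_{r,q}(q-1)$ is less than $1$ exactly under the subcritical assumption, then upgrade the pointwise Duhamel representation furnished by Corollary \ref{grad} to an identity in $L^{r}(\mathbb{R}^{N})$ using $u\in C([0,T);L^{r})$ from Theorem \ref{decay} and the newly proved integrability of $|\nabla u|^{q}$. The paper compresses the limit passage by simply invoking that $u$ is already a pointwise mild solution, but your explicit $\epsilon\to0$ argument in $L^{r}$ is the same mechanism.
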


\begin{proof}
Let $u$ be any weak $L_{loc}^{r}$ solution. Then from (\ref{vou}),
\[
\int_{0}^{\tau}\left\Vert \nabla u(.,t)\right\Vert _{L^{qr}(\mathbb{R}^{N}%
)}^{q}dt=\int_{0}^{\tau}(\int_{\mathbb{R}^{N}}\left\vert \nabla
u(.,t)\right\vert ^{qr}dx)^{\frac{1}{r}}dt\leqq C\int_{0}^{\tau}t^{-(\frac
{q}{2}+\sigma_{r,q,N}(q-1))}dt
\]
with $C=C_{q}\Vert u_{0}\Vert_{L^{r}(\mathbb{R}^{N})}^{(1+\varpi
_{r,q,N}(q-1))r},$ and (\ref{subi}) is equivalent to $q/2+\sigma
_{r,q,N}(q-1)<1$. Since $\nu>0$, the estimate (\ref{voi}) leads to the same
conclusion, because (\ref{subi}) is also equivalent to $q/2+(q-1)N/2r<1$. Then
(\ref{dim}) holds. Moreover from Corollary \ref{grad}, $u$ is a mild pointwise
solution:
\begin{equation}
u(.,t)=e^{t\Delta}u_{0}(.)-\nu\int_{0}^{t}\int_{\mathbb{R}^{N}}%
g(x-y,t-s)|\nabla u(y,s)|^{q}dyds. \label{frou}%
\end{equation}
Otherwise $u\in C(\left[  0,T\right)  ;L^{r}\left(  \mathbb{R}^{N}\right)  )$
from Theorem \ref{decay}, and $f=\left\vert \nabla u\right\vert ^{q}\in
L_{loc}^{1}(\left[  0,T\right)  ;L^{r}(\mathbb{R}^{N})),$ thus the relation
(\ref{frou}) holds in $L^{r}(\mathbb{R}^{N}),$
\begin{equation}
u(.,t)=(e^{t\Delta}u_{0})-\nu\int_{0}^{t}e^{(t-s)\Delta}\left\vert \nabla
u(.,s)\right\vert ^{q}(s)ds\qquad\text{in }L^{r}(\mathbb{R}^{N}), \label{truc}%
\end{equation}
that means $u$ is a mild $L^{r}$solution. The converse is clear.$\medskip$
\end{proof}

Next we deduce the uniqueness results of Theorem \ref{uni1}.

\begin{theorem}
\label{unir} Let $u_{0}\in L^{r}(\mathbb{R}^{N}).$ Assume (\ref{subi}) or
$q=2,$ and $\nu>0.$ Then there exists a unique weak $L_{loc}^{r}$ solution $u$
of problem (\ref{cau}). In the first case, $u\in C((0,T);W^{1,qr}%
(\mathbb{R}^{N})).$
\end{theorem}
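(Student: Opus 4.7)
The existence of a weak $L^r_{loc}$ solution will be supplied later by Proposition \ref{pou} (existence for any $u_0 \in L^r(\mathbb{R}^N)$ when $q \leq 2$), so the substance of the theorem is uniqueness, and the argument splits naturally into the two regimes.

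In the subcritical range $1 < q < (N+2r)/(N+r)$ the plan is to reduce to the uniqueness result for mild $L^r$ solutions proved in \cite[Theorem 2.1]{BASoWe}. By Theorem \ref{val}, any weak $L^r_{loc}$ solution $u$ is already a mild $L^r$ solution, and by Corollary \ref{pass} it belongs to $C^{2,1}(Q_{\mathbb{R}^N,T})$. The gradient estimates of Corollary \ref{grad} together with the computation leading to (\ref{vou}) yield the integrability $\nabla u \in L^q_{loc}([0,T);L^{qr}(\mathbb{R}^N))$, and the $L^\infty_{loc}((0,T);L^{qr})$ control of $\nabla u$. The remaining point, which is what \cite[Theorem 2.1]{BASoWe} requires, is the continuity $u \in C((0,T);W^{1,qr}(\mathbb{R}^N))$: for any $\varepsilon > 0$, by Corollary \ref{pass} we have $u(.,\varepsilon) \in C_b^2(\mathbb{R}^N) \cap L^r(\mathbb{R}^N)$, and on $[\varepsilon,T)$ the function $u$ coincides with the unique classical semigroup solution from \cite{GiGuKe} starting at $u(.,\varepsilon)$. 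Standard interior parabolic regularity applied to the mild representation (\ref{frou}), combined with the uniform $L^{qr}$ bound on $\nabla u$, then gives the desired continuity into $W^{1,qr}(\mathbb{R}^N)$ on compact subintervals of $(0,T)$. With this in hand the quoted uniqueness result applies directly.

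For $q = 2$ the subcritical argument collapses, because $(N+2r)/(N+r) < 2$ for every finite $r$, so I would invoke the Hopf--Cole transformation. Setting $v = 1 - e^{-u}$ and using the smoothness of $u$ in $Q_{\mathbb{R}^N,T}$ guaranteed by Corollary \ref{pass}, a direct computation shows that $v$ satisfies the heat equation $v_t - \Delta v = 0$ in the classical sense in $Q_{\mathbb{R}^N,T}$, with $0 \leq v < 1$. Moreover, $v_0 := 1 - e^{-u_0}$ lies in $L^r(\mathbb{R}^N) \cap L^\infty(\mathbb{R}^N)$, and the convergence $v(.,t) \to v_0$ in $L^r_{loc}(\mathbb{R}^N)$ as $t \to 0$ follows from that of $u(.,t)$ via the Lipschitz bound on $s \mapsto 1 - e^{-s}$ for $s \geq 0$. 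Uniqueness of the heat equation in the class of bounded solutions (and indeed in $C([0,T);L^r)$) then pins down $v$, and inverting the transformation via $u = -\ln(1-v)$ yields uniqueness of $u$.

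The main obstacle will be carrying out cleanly the continuity step $u \in C((0,T);W^{1,qr}(\mathbb{R}^N))$ in the subcritical case: Corollary \ref{grad} only provides $L^\infty_{loc}((0,T))$ control of the $L^{qr}$-norm of $\nabla u$, and promoting this to norm-continuity in $t$ must be done by differentiating the mild representation on $[\varepsilon,T)$ and exploiting the smoothing properties of the heat kernel together with the integrability (\ref{dim}) of $|\nabla u|^q$ in $L^1_{loc}([0,T);L^r)$. Once this is secured, both the subcritical and $q=2$ uniqueness statements reduce to classical tools.
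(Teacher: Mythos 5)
Your overall strategy coincides with the paper's: in the subcritical range, upgrade any weak $L_{loc}^{r}$ solution to a mild $L^{r}$ solution via Theorem \ref{val} and then place it in the uniqueness class of \cite{BASoWe}; for $q=2$, use the transformation $v=1-e^{-u}$ to reduce to the heat equation. Two points of divergence are worth noting. First, the step you single out as the main obstacle --- proving $u\in C((0,T);W^{1,qr}(\mathbb{R}^{N}))$ \emph{before} invoking uniqueness --- is not actually required: the uniqueness class of \cite[Lemma 2.2 and Remark 2.5]{BASoWe} is $L_{loc}^{\infty}((0,T);W^{1,qr}(\mathbb{R}^{N}))$, which follows immediately from the gradient bound $\left\vert \nabla u\right\vert \in L_{loc}^{\infty}((0,T);L^{qr}(\mathbb{R}^{N}))$ of Corollary \ref{grad} together with $u\in L_{loc}^{\infty}((0,T);L^{qr}(\mathbb{R}^{N}))$ obtained by interpolating between the $L^{r}$ decay and the $L^{\infty}$ regularizing estimate. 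The continuity in $W^{1,qr}$ asserted in the statement is then inherited \emph{a posteriori} from the constructed solution of \cite[Theorem 2.1]{BASoWe} once uniqueness identifies $u$ with it; your plan of deriving it directly from the mild representation is both unnecessary and, as sketched, incomplete. Second, Proposition \ref{pou} covers only $1<q<2$, so it does not furnish existence at $q=2$; there existence and uniqueness both come from the Hopf--Cole reduction (the paper cites \cite[Theorem 4.2]{BASoWe} for this), and your direct computation with $v=1-e^{-u}$, legitimized by the $C^{2,1}$ regularity from Corollary \ref{pass} and the boundedness $0\leqq v<1$, is essentially that argument made explicit. With the uniqueness class corrected as above, your proof closes along the same lines as the paper's.
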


\begin{proof}
(i) Case $1<q<(N+2r)/(N+r).$ From \cite[Theorem 2.1]{BASoWe}, there exists a
mild $L^{r}$ solution $u$, and it is unique in the class of mild $L^{r}$
solutions such that $u\in L_{loc}^{\infty}((0,T);W^{1,qr}\left(
\mathbb{R}^{N}\right)  ),$ see \cite[Lemma 2.2 and Remark 2.5]{BASoWe}. Then u
is a $L_{loc}^{r}$ solution. Let $v$ be any weak $L_{loc}^{r}$ solution, thus
$u$ is a mild $L^{r}$ solution, from Theorem \ref{val}. From Theorem
\ref{decay}, Corollary \ref{pass}, and Theorem \ref{val}, $v\in L^{\infty
}((0,T);L^{r}(\mathbb{R}^{N}))\cap$ $L_{loc}^{\infty}(\left(  0,T\right)
;C_{b}(\mathbb{R}^{N})),$ and $\left\vert \nabla v\right\vert \in
L_{loc}^{\infty}((0,T);L^{qr}\left(  \mathbb{R}^{N}\right)  )$ from Theorem
\ref{val} . Then $v\in L_{loc}^{\infty}((0,T);W^{1,qr}\left(  \mathbb{R}%
^{N}\right)  ),$ then $v=u$, and we reach the conclusion. Moreover $u\in
C((0,T);W^{1,qr}(\mathbb{R}^{N})),$ from \cite[Theorem 2.1]{BASoWe}.\medskip

(ii) Case $q=2.$ From \cite[Theorem 4.2]{BASoWe} there exists a unique
solution $u$ such that $u\in C(\left[  0,T\right)  ;L^{r}\left(
\mathbb{R}^{N}\right)  )\cap u\in C^{2,1}(\left(  Q_{\mathbb{R}^{N},\infty
}\right)  $ solution of (\ref{un}) at each point. Then it is a weak
$L_{loc}^{r}$ solution. Reciprocally any weak $L_{loc}^{r}$ solution $u$
satisfies the conditions above, from Theorem \ref{decay} and \cite{BiDao1}.
\end{proof}

\begin{theorem}
\label{souc} Assume $1<q<(N+2)/(N+1),$ $\nu>0.$ Let $u_{0}\in\mathcal{M}%
_{b}^{+}(\mathbb{R}^{N}).$ Then there exists a unique weak $\mathcal{M}_{loc}$
solution of problem (\ref{cau}).
\end{theorem}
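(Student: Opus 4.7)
The plan is to reduce to the known uniqueness result for mild $\mathcal{M}$ solutions in the subcritical range, by showing that every weak $\mathcal{M}_{loc}$ solution is automatically a mild $\mathcal{M}$ solution. Existence then follows from \cite{BeLa99} or \cite{BASoWe}, which provides a mild $\mathcal{M}$ solution; by Lemma \ref{lem} this is a weak semi-group solution, and by the remark following Lemma \ref{lem} it is in particular a weak $\mathcal{M}_{loc}$ solution.

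For uniqueness, let $u$ be any weak $\mathcal{M}_{loc}$ solution. First I would establish the crucial integrability $|\nabla u|^{q}\in L_{loc}^{1}([0,T);L^{1}(\mathbb{R}^{N}))$ at the origin. From Corollary \ref{grad}(ii), $u$ satisfies the Bernstein-type pointwise bound $|\nabla u(.,t)|^{q}\leqq C_{q}\,u(.,t)\|u(.,t/2)\|_{L^{\infty}(\mathbb{R}^{N})}^{q-1} t^{-q/2}$, and combined with the $L^{\infty}$ decay of Theorem \ref{bmea} and the conservation $\int u(.,t)\,dx \leq \int du_{0}$ of Theorem \ref{inmea}, this yields
\[
\int_{\mathbb{R}^{N}}|\nabla u(.,t)|^{q}\,dx \leqq C\, t^{-(q/2+\sigma_{1,q}(q-1))}\Bigl(\int_{\mathbb{R}^{N}}du_{0}\Bigr)^{1+\varpi_{1,q}(q-1)}.
\]
The condition $q<(N+2)/(N+1)$ is precisely equivalent to $q/2+\sigma_{1,q}(q-1)<1$, so $|\nabla u|^{q}$ is integrable at $t=0$.

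Next I would promote $u$ to a pointwise mild solution, following the same scheme used in the last step of the proof of Corollary \ref{grad}(i). For $\epsilon>0$, Corollary \ref{pass} gives $u(.,\epsilon)\in C_{b}^{2}(\mathbb{R}^{N})$, hence by the classical Duhamel formula
\[
u(x,t)=e^{(t-2\epsilon)\Delta}u(x,2\epsilon)-\int_{2\epsilon}^{t}\!\int_{\mathbb{R}^{N}}g(x-y,t-s)|\nabla u(y,s)|^{q}\,dyds,
\]
valid for $t\geqq 2\epsilon$. The weak-$\ast$ convergence $u(.,\epsilon)\rightharpoonup u_{0}$ from (\ref{mea}), together with the definition of $e^{t\Delta}$ on $\mathcal{M}_{b}^{+}(\mathbb{R}^{N})$ as the adjoint on $C_{0}(\mathbb{R}^{N})$, gives $e^{(t-2\epsilon)\Delta}u(.,2\epsilon)\to e^{t\Delta}u_{0}$ pointwise; the integral term converges by dominated convergence using the integrability just established. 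Thus $u$ is a pointwise mild solution, and by the remark after that definition $u$ is a mild $\mathcal{M}$ solution.

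Uniqueness then reduces to uniqueness of mild $\mathcal{M}$ solutions in the subcritical regime $1<q<(N+2)/(N+1)$. Writing
\[
u_{1}(.,t)-u_{2}(.,t)=-\int_{0}^{t}e^{(t-s)\Delta}\bigl(|\nabla u_{1}|^{q}-|\nabla u_{2}|^{q}\bigr)(.,s)\,ds,
\]
using $\bigl||a|^{q}-|b|^{q}\bigr|\leqq q(|a|^{q-1}+|b|^{q-1})|a-b|$ together with the gradient estimates of Corollary \ref{grad}(ii) for $u_{1},u_{2}$, one runs a Banach contraction in a suitable weighted space $C((0,T_{0});L^{1}(\mathbb{R}^{N}))$ with a $t^{\alpha}$-weight chosen so that the exponent integrates at $0$ precisely when $q<(N+2)/(N+1)$. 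This is exactly the uniqueness framework of \cite[Lemma 2.2, Remark 2.5]{BASoWe} adapted to measure data, which I would invoke rather than reprove. The main obstacle is the first reduction step: passing from a purely local weak $\mathcal{M}_{loc}$ solution to a global mild $\mathcal{M}$ solution requires both the quantitative gradient estimates at $t=0$ (which need the sharp subcritical exponent) and the careful passage to the limit $\epsilon\to 0$ in the Duhamel representation, against only weak-$\ast$ convergence of the initial trace.
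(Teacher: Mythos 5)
Your proposal is correct and follows essentially the same route as the paper: both reduce the problem to showing that any weak $\mathcal{M}_{loc}$ solution is in fact a mild $\mathcal{M}$ solution (via the $L^{\infty}$ regularizing effect, the Bernstein gradient bound of Corollary \ref{grad}(ii), and the integrability of $|\nabla u|^{q}$ up to $t=0$, which is exactly where $q<(N+2)/(N+1)$ enters), and then invoke the semigroup uniqueness of \cite{BASoWe} together with a singular Gronwall argument. The only cosmetic difference is that you pass through the notion of pointwise mild solution while the paper verifies the weak semi-group identity and applies Lemma \ref{lem}; these intermediate notions are shown in the paper to coincide with mild $\mathcal{M}$ solutions, so the substance is the same.
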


\begin{proof}
The existence of a weak semi-group solution was obtained in \cite{BeLa99} by
approximation. The existence of a mild $\mathcal{M}$ solution was proved in
\cite[Theorem 2.2]{BASoWe}, and the two notions are equivalent from Lemma
\ref{lem}. In any case the solution is a weak $\mathcal{M}_{loc}$ solution.
Next consider any solution $\mathcal{M}_{loc}$ solution $u$. Then $u(.,t)\in
L^{\infty}(\mathbb{R}^{N})$ for any $t\in\left(  \epsilon,T\right)  $ by
applying Theorem \ref{bound} on $\left(  \epsilon/2,T\right)  $. Then again we
deduce $u(.,\epsilon)\in C_{b}(\mathbb{R}^{N}),$ and then (\ref{lis}) holds.
From Theorem \ref{decay} we still obtain that $u\in L_{loc}^{\infty
}((0,T);W^{1,q}\left(  \mathbb{R}^{N}\right)  ).$ And from the uniquenes on
$(\epsilon,T),$ we have $u\in C((\epsilon,T);W^{1,q}\left(  \mathbb{R}%
^{N}\right)  )$ from Theorem \ref{unir}. Then $u\in C((0,T);W^{1,q}\left(
\mathbb{R}^{N}\right)  ).$ And $u$ satisfies (\ref{ris}), from Theorem
\ref{inmea}. Then $u$ is a weak semi-group solution, thus a mild $\mathcal{M}$
solution from Lemma \ref{lem}. Therefore $u$ belongs to the class of
uniqueness of \cite[Theorem 2.2]{BASoWe}. We can also prove the uniqueness
directly: if $u_{1},u_{2}$ are two solutions, they are mild $\mathcal{M}$
solutions, thus
\[
(u_{1}-u_{2})(.,t)=\nu\int_{0}^{t}e^{(t-s)\Delta}(\left\vert \nabla
u_{1}(.,s)\right\vert ^{q}-\left\vert \nabla u_{2}(.,s)\right\vert ^{q})ds
\]
and we know that $\left\vert \nabla u_{j}\right\vert ^{q}\in C((0,T);L^{1}%
(\mathbb{R}^{N})),$ hence%
\begin{align*}
\left\Vert \nabla(u_{1}-u_{2})(.,t)\right\Vert _{L^{q}\left(  \mathbb{R}%
^{N}\right)  }  &  \leqq\nu\int_{0}^{t}\left\Vert \nabla(e^{(t-s)\Delta
})\right\Vert _{L^{1}\left(  \mathbb{R}^{N}\right)  }\left\Vert \left\vert
\nabla u_{1}(.,s)\right\vert ^{q}-\left\vert \nabla u_{2}(.,s)\right\vert
^{q}\right\Vert _{L^{qr}\left(  \mathbb{R}^{N}\right)  }ds\\
&  \leqq C\int_{0}^{t}(t-s)^{-\frac{1}{2}}\max_{j=1,2}\left\Vert \nabla
u_{j}(.,s)\right\Vert _{L^{\infty}\left(  \mathbb{R}^{N}\right)  }%
^{q-1}\left\Vert \nabla(u_{1}-u_{2})(.,s)\right\Vert _{L^{q}\left(
\mathbb{R}^{N}\right)  }ds\\
&  \leqq C\int_{0}^{t}(t-s)^{-\frac{1}{2}}s^{-(q-1)\vartheta_{1,q,N}%
}\left\Vert \nabla(u_{1}-u_{2})(.,s)\right\Vert _{L^{qr}\left(  \mathbb{R}%
^{N}\right)  }ds.
\end{align*}
thus we can apply the singular Gronwall Lemma when $(q-1)\vartheta
_{1,q,N}<1/2,$ which means precisely $q<(N+2)/(N+1).$ Then $\nabla(u_{1}%
-u_{2})(.,t)=0$ in $L^{q}\left(  \mathbb{R}^{N}\right)  ,$ hence $u_{1}%
=u_{2}.\medskip$
\end{proof}

Finally we give a short proof of the existence result of \cite[Theorem
4.1]{BASoWe}.

\begin{proposition}
\label{pou}Let $\nu>0,$ $1<q<2$. For any nonnegative $u_{0}\in L^{r}%
(\mathbb{R}^{N}),r\geq1,$ there exists a mild pointwise solution $u$ of
problem (\ref{cau}), and $u\in C(\left[  0,T\right)  ;L^{r}\left(
\mathbb{R}^{N}\right)  )$.
\end{proposition}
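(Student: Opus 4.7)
The strategy is to approximate $u_{0}$ by smooth compactly supported data, solve classically, and pass to the limit using the a priori estimates already established. Specifically, I would pick a sequence $(u_{0,n})\subset C_{b}^{2}(\mathbb{R}^{N})\cap L^{r}(\mathbb{R}^{N})$ of nonnegative functions with $u_{0,n}\to u_{0}$ in $L^{r}(\mathbb{R}^{N})$ (e.g.\ truncate $u_{0}$ to a ball of radius $n$ and mollify). For each $n$, the result of \cite{AmBA} recalled at Section \ref{diff} provides a unique bounded classical solution $u_{n}\in C^{2,1}(Q_{\mathbb{R}^{N},\infty})\cap C_{b}(\mathbb{R}^{N}\times [0,\infty))$ with $u_{n}(\cdot,0)=u_{0,n}$, and $u_{n}$ is automatically a mild pointwise solution with initial data $u_{0,n}$, so it satisfies the integral identity
\[
u_{n}(x,t)=(e^{t\Delta}u_{0,n})(x)-\int_{0}^{t}\!\!\int_{\mathbb{R}^{N}} g(x-y,t-s)|\nabla u_{n}(y,s)|^{q}\,dy\,ds.
\]

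Next I would collect uniform estimates in $n$. From Theorem \ref{decay} applied to each $u_{n}$, $\|u_{n}(\cdot,t)\|_{L^{r}(\mathbb{R}^{N})}\leq \|u_{0,n}\|_{L^{r}(\mathbb{R}^{N})}\leq M$ for some $M$ independent of $n$. Theorem \ref{bound} provides the regularizing $L^{\infty}$ bound $\|u_{n}(\cdot,t)\|_{L^{\infty}(\mathbb{R}^{N})}\leq Ct^{-\sigma_{r,q,N}}M^{\varpi_{r,q,N}}$; and Corollary \ref{grad}, together with the Bernstein estimate (\ref{bela}), yields the pointwise bound $|\nabla u_{n}(\cdot,t)|^{q}\leq u_{n}(\cdot,t)/t$ as well as the $L^{qr}$ bound (\ref{vou}) on $\nabla u_{n}$, all with constants independent of $n$. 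Corollary \ref{pass}(ii) applies and, after extraction of a subsequence, $u_{n}\to u$ in $C_{loc}^{2,1}(Q_{\mathbb{R}^{N},T})$, where $u$ is a nonnegative weak solution of (\ref{un}) which inherits all the above $L^{r}$, $L^{\infty}$, and gradient estimates by Fatou / locally uniform convergence.

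I would then pass to the limit in the mild identity for almost every $(x,t)\in Q_{\mathbb{R}^{N},T}$. For the linear part, $u_{0,n}\to u_{0}$ in $L^{r}(\mathbb{R}^{N})$ forces $e^{t\Delta}u_{0,n}\to e^{t\Delta}u_{0}$ in $L^{r}(\mathbb{R}^{N})$, hence (along a subsequence) a.e. For the nonlinear part, $|\nabla u_{n}|^{q}\to|\nabla u|^{q}$ pointwise on $Q_{\mathbb{R}^{N},T}$ by $C^{2,1}_{loc}$ convergence, and the uniform Bernstein bound combined with the uniform $L^{\infty}$ regularization gives
\[
|\nabla u_{n}(y,s)|^{q}\leq \frac{u_{n}(y,s)}{s}\leq C\,s^{-\sigma_{r,q,N}-1}M^{\varpi_{r,q,N}}\wedge \|g(\cdot,t-s)\|_{L^{r'}}^{-1}\cdot s^{-1}M,
\]
so I would split the integral in $s$ near $0$ and near $t$, and in $y$ near $x$ and far, and use the Gaussian decay of $g(x-y,t-s)$ together with the two forms of the bound ($L^{\infty}$ near $s=t$, $L^{r}$ near $s=0$) to construct an $n$-independent dominating function; dominated convergence then yields the mild pointwise identity for $u$.

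The last step is to check $u\in C([0,T);L^{r}(\mathbb{R}^{N}))$ with $u(\cdot,0)=u_{0}$. From the mild identity just obtained, $\|u(\cdot,t)-e^{t\Delta}u_{0}\|_{L^{r}(\mathbb{R}^{N})}$ is controlled by the $L^{r}$ norm of the gradient-integral; estimate (\ref{vou}) (valid for $u$ by the uniform passage to the limit) together with Young's inequality for the heat semigroup gives the required vanishing as $t\to 0$, and continuity of $e^{t\Delta}u_{0}$ in $L^{r}$ finishes the argument. The main obstacle in the plan is the uniform dominating function in the third paragraph: it is precisely at this step that one must combine the Bernstein inequality with the two-parameter scale of $L^{r}$-$L^{\infty}$ smoothing effects, since for general $r\geq 1$ and supercritical $q\in[(N+2r)/(N+r),2)$ the quantity $|\nabla u_{n}|^{q}$ is \emph{not} in $L^{1}_{loc}([0,T);L^{r})$ uniformly, so the direct estimate from Theorem \ref{val} is unavailable and the heat-kernel smoothing of $g(x-y,t-s)$ must be used in an essential way.
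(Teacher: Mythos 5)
Your overall architecture (approximate the data, solve classically, extract a $C_{loc}^{2,1}$ limit via Corollary \ref{pass}) matches the paper's, but the step you yourself flag as the main obstacle --- an $n$-uniform dominating function for $g(x-y,t-s)|\nabla u_{n}(y,s)|^{q}$ near $s=0$ --- is a genuine gap, not a technical chore. Near $s=0$ the kernel has $t-s$ bounded away from zero, so it offers nothing beyond the H\"older bound $\Vert g(\cdot,t-s)\Vert_{L^{\rho^{\prime}}}\leqq C(t)$, and the time singularity must be absorbed by $\Vert\,|\nabla u_{n}(\cdot,s)|^{q}\Vert_{L^{\rho}}$ for some $\rho\in\left[  r,\infty\right]  $. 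Interpolating between (\ref{vou}) (endpoint $\rho=r$, time exponent $q/2+\sigma_{r,q}(q-1)$) and (\ref{sic}) (endpoint $\rho=\infty$, time exponent $q\vartheta_{r,q}$, which is always $>1$ since $q(N+r)-(rq+N(q-1))=N>0$), the resulting exponent is a convex combination that is $<1$ only when the $\rho=r$ endpoint already is, i.e. only for $q<(N+2r)/(N+r)$. So in the supercritical range $q\in\left[  (N+2r)/(N+r),2\right)  $ no dominating function of the kind you describe exists; with it falls the passage to the limit in the Duhamel term, and also your final step, which again needs $|\nabla u|^{q}\in L_{loc}^{1}(\left[  0,T\right)  ;L^{r}(\mathbb{R}^{N}))$. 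Since the mild identity is your only mechanism for showing that the limit $u$ actually takes the datum $u_{0}$ at $t=0$, the argument collapses precisely in that range.

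The paper avoids this by two structural choices. First, it approximates monotonically, $u_{0,n}=\min(u_{0},n)$, so that $(u_{n})$ is nondecreasing by comparison and $u_{n}\leqq e^{t\Delta}u_{0,n}\leqq e^{t\Delta}u_{0}$; after a direct test-function computation showing $(|\nabla u_{n}|^{q})$ is bounded in $L_{loc}^{1}(\left[  0,T\right)  ;L_{loc}^{1}(\mathbb{R}^{N}))$, it invokes the initial-trace result of \cite[Proposition 2.11]{BiDao1} to get $u(\cdot,t)\rightarrow\mu_{0}$ weakly*, squeezes $u_{0,n}\leqq\mu_{0}\leqq u_{0}$ to identify $\mu_{0}=u_{0}$, and obtains $u\in C(\left[  0,T\right)  ;L^{r}(\mathbb{R}^{N}))$ from the domination $u(\cdot,t)\leqq e^{t\Delta}u_{0}$ and the Lebesgue theorem. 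Second, the pointwise mild identity is derived only afterwards, from Corollary \ref{grad}: for the fixed limit solution one writes the identity on $\left[  2\epsilon,t\right]  $ and lets $\epsilon\rightarrow0$, where the singular integral converges by monotone convergence because the other two terms of the identity converge --- no dominating function uniform in an approximating sequence is ever required. You should adopt the monotone approximation and the trace argument; as written, your scheme works only under the extra restriction $q<(N+2r)/(N+r)$.
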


\begin{proof}
Let $u_{0,n}=\min(u_{0},n).$ Then $u_{0,n}\in L^{\rho}(\mathbb{R}^{N})$ for
any $\rho\geq r.\ $We choose $\rho>N(q-1)/(2-q),$ that means $q<(N+2\rho
)/(N+\rho).$ From \cite[Theorem 2.1]{BASoWe}, there exists a mild $L^{\rho}$
solution $u_{n}$ with initial data $u_{0,n},$ and $u_{n}\in C((0,T);C_{b}%
^{2}(\mathbb{R}^{N}))\cap C^{2,1}(Q_{\mathbb{R}^{N},T}).$ The sequence
$\left(  u_{n}\right)  $ is nondecreasing from the comparison principle, and
$u_{n}(.,t)\leq e^{t\Delta}u_{0}\leq Ct^{-N/2r}\left\Vert u_{0}\right\Vert
_{L^{r}(\mathbb{R}^{N})}.$ From Corollary \ref{pass}, $\left(  u_{n}\right)  $
converges in $C_{loc}^{2,1}(Q_{\mathbb{R}^{N},T})$ to a weak solution $u$ of
(\ref{un}) in $Q_{\mathbb{R}^{N},T}$, and $u(.,t)\leq e^{t\Delta}u_{0}.$
Moreover $(\left\vert \nabla u_{n}\right\vert ^{q})$ is bounded in
$L_{loc}^{1}\left(  \left[  0,T\right)  ;L_{loc}^{1}(\mathbb{R}^{N})\right)
:$ indeed for any $\xi\in\mathcal{D}(\mathbb{R}^{N}),$ with values in $\left[
0,1\right]  ,$ and any $0<s<t<T,$%
\begin{align*}
&  \int_{\mathbb{R}^{N}}u_{n}(t,.)\xi^{q^{\prime}}dx+\nu\int_{s}^{t}%
\int_{\mathbb{R}^{N}}|\nabla u_{n}|^{q}\xi^{q^{\prime}}dx\leqq-q^{\prime}%
\nu\int_{s}^{t}\int_{\mathbb{R}^{N}}\xi^{\frac{1}{q-1}}\nabla u_{n}.\nabla\xi
dx+\int_{\mathbb{R}^{N}}u_{n}(s,.)\xi^{q^{\prime}}dx\\
&  \leqq\frac{\nu}{2}\int_{s}^{t}\int_{\mathbb{R}^{N}}|\nabla u_{n}|^{q}%
\xi^{q^{\prime}}dx+Ct\int_{\mathbb{R}^{N}}|\nabla\xi|^{q^{\prime}}%
dx+\int_{\mathbb{R}^{N}}u_{n}(s,.)\xi^{q^{\prime}}dx,
\end{align*}
and $u_{n}\in C(\left[  0,T\right)  ;L^{\rho}\left(  \mathbb{R}^{N}\right)
);$ thus we can go to the limit as $s\rightarrow0:$
\[
\int_{\mathbb{R}^{N}}u_{n}(t,.)\xi^{q^{\prime}}dx+\frac{1}{2}\int_{s}^{t}%
\int_{\mathbb{R}^{N}}|\nabla u_{n}|^{q}\xi^{q^{\prime}}dx\leqq Ct\int%
_{\mathbb{R}^{N}}|\nabla\xi|^{q^{\prime}}dx+\int_{\mathbb{R}^{N}}u_{0}%
\xi^{q^{\prime}}dx.
\]
Thus \textbf{ }$\left\vert \nabla u\right\vert ^{q}\in L_{loc}^{1}\left(
\left[  0,T\right)  ;L_{loc}^{1}(\mathbb{R}^{N})\right)  ,$ hence, from
\cite[Proposition 2.15]{BiDao1}, $u$ admits a trace as $t\rightarrow0:$ there
exists a Radon measure $\mu_{0}$ in $\mathbb{R}^{N},$ such that $u(.,t)$
converges weakly* to $\mu_{0}.$ Otherwise $e^{t\Delta}u_{0}$ converges to
$u_{0}$ in $L^{r}(\mathbb{R}^{N}),$ thus $\mu_{0}\in L_{loc}^{1}%
(\mathbb{R}^{N})$ and $0\leq\mu_{0}\leq u_{0}$; and $u_{n}\leq u,$ thus
$u_{0,n}\leq\mu_{0},$ hence $\mu_{0}=u_{0}.$ Moreover there exists a function
$g\in L^{r}(\mathbb{R}^{N})$ such that $u(.,t)\leqq g$ for small $t.$ Then the
nonnegative function $e^{t\Delta}u_{0}-u(.,t)$ converges weakly* to $0,$ and
then in $L_{loc}^{1}(\mathbb{R}^{N}).$ Hence $u(.,t)$ converges to $u_{0}$ in
$L_{loc}^{1}(\mathbb{R}^{N}),$ then in $L^{r}(\Omega)$ from the dominated
convergence theorem. Thus $u\in C(\left[  0,T\right)  ;L^{r}\left(
\mathbb{R}^{N}\right)  )$. In particular $u$ is a weak $L_{loc}^{r}$ solution,
then a pointwise mild solution, from Corollary \ref{grad}.
\end{proof}

\begin{remark}
The uniqueness of the solution is still an open problem when $u_{0}\in
L^{r}(\mathbb{R}^{N})$ and $q\geqq(N+2r)/(N+r).$
\end{remark}

\subsection{More decay estimates for $q<(N+2r)/(N+r)$}

Here, we exploit theorem \ref{decay} to obtain a better decay estimate of the
$L^{r}$ norm when $u_{0}\in L^{r}(\mathbb{R}^{N})$ in the subcritical case
(\ref{subi}), which appears to be new for $r>1.$ In case $r=1$ we find again
the result of \cite{AnTeU}, proved \textit{under the assumption that the
energy relation (\ref{his}) holds.}

\begin{theorem}
\label{esti}Let $r\geqq1$ and assume (\ref{subi}), $\nu>0$. Let $u$ be any
non-negative weak $r$ solution of problem (\ref{cau}) in $Q_{\mathbb{R}%
^{N},\infty},$ with initial data $u_{0}\in L^{r}(\mathbb{R}^{N})$. Then there
exists $C=C(N,q,r)$ such that, for any $t>0,$
\begin{equation}
\int_{\mathbb{R}^{N}}u^{r}(.,t)dx\leq C(\int_{\{|x|>\sqrt{t}\}}u_{0}%
^{r}(x)dx+t^{-\frac{ar-N}{2}}),\qquad a=\frac{2-q}{q-1}. \label{alb}%
\end{equation}
As a consequence, $\lim_{t\rightarrow\infty}\Vert u(t)\Vert_{L^{r}%
(\mathbb{R}^{N})}=0$ and
\[
r\int_{0}^{\infty}\int_{\mathbb{R}^{N}}u^{r-1}|\nabla u|^{q}dxdt+r(r-1)\nu
\int_{0}^{\infty}\int_{\mathbb{R}^{N}}u^{r-2}|\nabla u|^{2}dxdt=\int%
_{\mathbb{R}^{N}}u_{0}^{r}dx.
\]

\end{theorem}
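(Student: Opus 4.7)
The plan is to decompose $\int_{\mathbb{R}^N} u^r(.,t)dx$ into an \emph{outer} part on $\{|x|\geq \sqrt{t}\}$ and an \emph{inner} part on $\{|x|\leq \sqrt{t}\}$, and to bound each by the right-hand side of (\ref{alb}). Since $1<q<(N+2r)/(N+r)$, Theorem \ref{val} gives that $u$ is a mild $L^r$ solution lying in $C([0,T);L^r(\mathbb{R}^N))$; in particular the energy identity of Theorem \ref{decay} and the Bernstein bound $|\nabla u|^q\leq Cu/t$ from Corollary \ref{grad} are both at our disposal.

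For the outer estimate I would re-run the test-function computation from the first step of the proof of Theorem \ref{decay}(i), but with the cutoff $\xi(x)=\psi(|x|/R)$ now taken so that $\psi\in[0,1]$ vanishes on $[0,1]$ and equals $1$ on $[2,\infty)$. The estimate $\int|\nabla\xi|^{rq'}dx\leq CR^{N-rq'}$ and the Young/H\"{o}lder chain with $\lambda=rq'$ carry over verbatim, yielding
\[
\Bigl(\int_{|x|\geq 2R}u^r(.,t)dx\Bigr)^{1/r}\leq \Bigl(\int_{|x|\geq R}u_0^r dx\Bigr)^{1/r}+CtR^{N/r-q'}.
\]
A direct algebraic check (using $a=(2-q)/(q-1)$ and $q'=q/(q-1)$) shows that $1+(N/r-q')/2=-(ar-N)/(2r)$. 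Choosing $R=\sqrt{t}/2$ turns the error into $Ct^{-(ar-N)/(2r)}$, and raising both sides to the $r$-th power via $(a+b)^r\leq 2^{r-1}(a^r+b^r)$ gives
\[
\int_{|x|\geq\sqrt{t}}u^r(.,t)dx\leq C\Bigl(\int_{|x|\geq\sqrt{t}/2}u_0^r dx+t^{-(ar-N)/2}\Bigr).
\]

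For the inner bound $\int_{|x|\leq \sqrt{t}}u^r(.,t)dx\leq Ct^{-(ar-N)/2}$, the same test-function argument is not enough, because in the proof of Theorem \ref{decay} the dissipation term $\int u^{r-1}|\nabla u|^q\xi^\lambda dx$ was simply discarded. Here one must retain it and combine it with Sobolev's inequality. Setting $v=u^{(r+q-1)/q}$ so that $u^{r-1}|\nabla u|^q=c_{r,q}|\nabla v|^q$, and applying Sobolev to $v\xi^{\lambda/q}$ (using $|\nabla(v\xi^{\lambda/q})|^q\leq C(|\nabla v|^q\xi^\lambda+v^q\xi^{\lambda-q}|\nabla\xi|^q)$), one obtains a lower bound of the form $y(t)^{1+\theta}\leq C\int u^{r-1}|\nabla u|^q\xi^\lambda dx+(\text{cutoff error})$, where $y(t)=\int u^r\xi^\lambda dx$ and $\theta>0$ is determined so that $1/\theta=(ar-N)/2$; this is the precise algebraic manifestation of the subcritical condition. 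The resulting differential inequality $y'\leq -C_1 y^{1+\theta}+C_2 R^{N-rq'}$ integrates (with $R=\sqrt{t}/2$) to $y(t)\leq Ct^{-(ar-N)/2}$, and summing with the outer estimate proves (\ref{alb}).

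The main obstacle is the Sobolev/interpolation step in the inner estimate: one has to verify that the Sobolev and scaling exponents fit together precisely, and this algebra is equivalent to the subcritical inequality $q<(N+2r)/(N+r)$. Once (\ref{alb}) is established, $\|u(.,t)\|_{L^r(\mathbb{R}^N)}\to 0$ follows as $t\to\infty$ because $\int_{|x|>\sqrt{t}}u_0^r dx\to 0$ by dominated convergence (using $u_0\in L^r$) and $t^{-(ar-N)/2}\to 0$ since $ar>N$ in the subcritical range. Passing to the limit $t\to\infty$ in the energy identities (\ref{egge}) or (\ref{agga}) of Theorem \ref{decay} then yields the energy identity on $(0,\infty)$.
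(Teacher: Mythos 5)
Your outer estimate is exactly the paper's treatment of $E_2(s)=\int_{\{|x|\geq 2R\}}u^r(.,s)dx$ (cutoff vanishing on a ball, the H\"older/Young chain with $\lambda=rq'$, then $R\sim\sqrt t$), and your exponent algebra $1+(N/r-q')/2=-(ar-N)/(2r)$ checks out. The gap is in the inner estimate. Your claimed differential inequality $y'\leq -C_1y^{1+\theta}+C_2R^{N-rq'}$ for the localized energy $y(t)=\int u^r\xi^\lambda dx$ does not hold: when you apply Sobolev to $v\xi^{\lambda/q}$, the commutator term $\int v^q\xi^{\lambda-q}|\nabla\xi|^q\sim R^{-q}\int_{\{R\leq|x|\leq 2R\}}u^{r+q-1}dxdt$ is not of size $R^{N-rq'}$ --- it carries the solution itself at the power $r+q-1>r$ on the annulus, and (for $r>1$) the flux term from the cutoff is $\sim y^{1/r'}R^{N/r-q'}$, not a constant. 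In fact the conclusion you want from this ODE, namely $\int_{\{|x|\leq\sqrt t\}}u^r(.,t)dx\leq Ct^{-(ar-N)/2}$ with $C$ independent of $u_0$, is \emph{false}: take $u_0=M\mathbf{1}_{B_\rho}$ with $\rho$ large; since constants kill both $\Delta u$ and $|\nabla u|^q$, the plateau persists for $t\ll\rho^2$ and the inner integral is of order $M^rt^{N/2}$, unbounded in $M$ while your right-hand side is fixed. (Consistently, in that example $\int_{\{|x|>\sqrt t\}}u_0^r\approx M^r\rho^N$, so the full estimate (\ref{alb}) survives --- but only because the initial-data term also controls the inner region.) A second, separate error: the exponent is not ``$1/\theta=(ar-N)/2$''; the Gagliardo--Nirenberg/H\"older chain produces the power $mq/r=1+(q-1)/r$ on the energy, and the rate $\rho=(ar-N)/2$ only emerges after combining that exponent with the $R$-dependent prefactor and $R=\sqrt t$.

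The paper avoids all of this by \emph{not} localizing the dissipation. It works with the global quantity $E(s)=\int_{\mathbb{R}^N}u^r(.,s)dx$, which is monotone with $-E'(s)\geq\int_{\mathbb{R}^N}|\nabla v|^qdx$ by the energy identity (\ref{egge}) (no cutoff, hence no commutator). The inner piece $E_1(s)=\int_{\{|x|<2R\}}u^rdx$ is bounded by H\"older on $B_{2R}$ plus the \emph{global} Gagliardo--Nirenberg inequality (\ref{gal}), absorbing $\frac12\Vert v\Vert_{L^{r/m}}^{r/m}=\frac12E(s)$ by Young, which yields $E(s)\leq C(\Vert\nabla v(s)\Vert_{L^q}^{r/m}R^{\frac Nk(1-\frac r{mq})}+2E_2(s))$. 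Setting $F(s)=E(s)-2A(t)$ with $A(t)$ the outer bound, one then runs a dichotomy: either $F$ vanishes somewhere in $(0,t)$, and monotonicity of $E$ gives $E(t)\leq 2A(t)$; or $F>0$ throughout, and the resulting inequality $F(s)^{mq/r}\leq C(-F'(s))t^{\frac N{2k}(1-\frac r{mq})}$ integrates from $s\to0$ to $F(t)\leq Ct^{-\rho}$ with no boundary term at $s=0$. If you want to keep a genuinely local argument you would have to feed the outer estimate back into the annulus terms, which is considerably more delicate; as written, your inner step does not close.
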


\begin{proof}
We still consider $v=u^{b}$ with $b=(q-1+r)/q<$ $r,$ and set $E(s)=\int%
_{\mathbb{R}^{N}}u^{r}(.,s)dx.$ Then $E\in W^{1,1}((0,T)),$ from the energy
relation (\ref{egge}), and for almost any $s\in(0,T),$%
\[
E^{\prime}(s)=-r(r-1)\nu\int_{\mathbb{R}^{N}}|\nabla u|^{2}u^{r-2}%
(.,s)dx-\int_{\mathbb{R}^{N}}|\nabla u|^{q}u^{r-1}(.,s)dx\leq0.
\]
Next, we set $E=E_{1}+E_{2}$ with
\[
E_{1}(s)=\int_{\{|x|<2R\}}u^{r}(x,s)dx,\qquad E_{2}(s)=\int_{\{|x|\geq
2R\}}u^{r}(x,s)dx.
\]
From the Gagliardo-Nirenberg inequality (\ref{gal}), we obtain successively,
with $C=C(N,q,r),$
\begin{align*}
E_{1}(s)  &  =\int_{\{|x|<2R\}}v^{\frac{r}{b}}(x,s)dx\leq\left(
\int_{\{|x|<2R\}}v^{q}(x,s)dx\right)  ^{\frac{r}{bq}}(2R)^{1-\frac{r}{bq}}\\
&  \leqq C\Vert\nabla v(s)\Vert_{L^{q}(\mathbb{R}^{N})}^{\frac{kr}{b}}\Vert
v(s)\Vert_{L^{r/b}(\mathbb{R}^{N})}^{\frac{(1-k)r}{b}}R^{N(1-\frac{r}{bq})}\\
&  \leqq\frac{1}{2}\Vert v(s)\Vert_{L^{r/b}(\mathbb{R}^{N})}^{\frac{r}{b}%
}+C\Vert\nabla v(s)\Vert_{L^{q}(\mathbb{R}^{N})}^{\frac{kr}{b}}R^{\frac{N}%
{k}(1-\frac{r}{bq})},
\end{align*}
thus
\begin{equation}
E(s)\leqq C(\Vert\nabla v(s)\Vert_{L^{q}(\mathbb{R}^{N})}^{\frac{r}{b}%
}R^{\frac{N}{k}(1-\frac{r}{bq})}+2E_{2}(s)). \label{3.35a}%
\end{equation}
Let $\eta\in\mathcal{D}(\mathbb{R}^{N})$ with values in $\left[  0,1\right]
,$ such that $\varphi=1$ in $B_{1},$ with support in $\overline{B_{2}}$, and
set $\eta=1-\varphi$, and $\varphi_{l}(x)=\varphi(\frac{x}{l}),\hspace
{0.05in}\eta_{R}(x)=\eta(\frac{x}{R})$. Observe that our assumption on $q$
implies $q^{\prime}>N/r$. As in the first step of theorem \ref{decay}, we
obtain for any $0<\sigma<s<t<T$, and $l>2R$,
\begin{equation}
\left(  \int_{\mathbb{R}^{N}}u^{r}(.,s)\varphi_{l}^{\lambda}\eta_{R}^{\lambda
}dx\right)  ^{\frac{1}{r}}\leq\left(  \int_{\mathbb{R}^{N}}u^{r}%
(.,\sigma)\varphi_{l}^{\lambda}\eta_{R}^{\lambda}dx\right)  ^{\frac{1}{r}%
}+C(s-\sigma)(R^{\frac{N}{r}-q^{\prime}}+l^{\frac{N}{r}-q^{\prime}}),
\label{3.36a}%
\end{equation}
with $\lambda=rq^{\prime},$ and $C=C(N,q,r,\eta).$ As $\sigma\rightarrow0$ and
$l\rightarrow\infty$. we deduce
\[
\left(  \int_{\mathbb{R}^{N}}u^{r}(x,s)\eta_{R}dx\right)  ^{\frac{1}{r}}%
\leq\left(  \int_{\mathbb{R}^{N}}u_{0}^{r}(x)\eta_{R}dx\right)  ^{\frac{1}{r}%
}+CsR^{\frac{N}{r}-q^{\prime}}.
\]
Taking $R=\sqrt{t}$, and setting
\[
\rho=r+\frac{N-rq^{\prime}}{2}=\frac{(N+2r)-q(N+r)}{2(q-1)}=\frac{ar-N}{2},
\]
we find, with a constant $C$ as above,
\[
E_{2}(s)\leq A(t)=C\left(  \int_{\{|x|>\sqrt{t}\}}u_{0}^{r}(x)dx+t^{-\rho
}\right)  ,
\]
Next, we consider $F(s)=E(s)-2A(t).$ $\hspace{0.05in}$If there exists
$t_{0}\in(0,t)$ such that $F(t_{0})\leq0$, then $F(s)\leq0,\hspace
{0.05in}\forall s\in(t_{0},t);$ thus $E(t)\leq2A(t),$ by continuity, hence
(\ref{alb}) holds. Next assume that $F(s)>0$, \hspace{0.05in}for any
$s\in(0,t).$ Since
\begin{equation}
-F^{\prime}(s)\geq\nu\int_{\mathbb{R}^{N}}|\nabla u|^{q}u^{r-1}(x,s)dx=\nu
\int_{\mathbb{R}^{N}}|\nabla v(x,s)|^{q}dx, \label{3.39a}%
\end{equation}
we find $F(s)\leqq C(-F^{\prime}(s))^{r/bq}t^{(1-r/bq)N/2k}$ from
(\ref{3.35a}). By integration we get
\[
C(t-s)t^{-\frac{N}{2k}(1-\frac{r}{bq})}\leqq F(t)^{-\frac{q-1}{r}}%
-F(s)^{\frac{q-1}{r}}.
\]
As $s\longrightarrow0$ we deduce that $F(t)\leqq Ct^{-\rho},$ since
$\rho=r/(q-1)-N/2k$, and (\ref{alb}) still holds.
\end{proof}

\begin{remark}
The case $r=1$ has been the object of many works, assuming that $u_{0}\in
L^{1}(\mathbb{R}^{N})\cap W^{1,\infty}(\mathbb{R}^{N}).$ There holds
\[
\lim_{t\rightarrow\infty}\Vert u(t)\Vert_{L^{1}(\mathbb{R}^{N})}%
=0\Longleftrightarrow q\leq(N+2)/(N+1),
\]
see \cite{AmBa}, \cite{BeLa99}, \cite{BAKo}, \cite{GaLa}. When $q<(N+2)/(N+1)$%
, the absorption plays a role in the asymptotics. From \cite{BeKaLa}, if
lim$_{\left\vert x\right\vert \rightarrow\infty}\left\vert x\right\vert ^{a}$
$u_{0}(x)=0,$ where $a=(2-q)/(q-1),$ then $u(.,t)$ converges as $t\rightarrow
\infty$ to the very singular solution constructed in \cite{QW}, \cite{BeLa01};
then $\int_{\mathbb{R}^{N}}u(.,t)dx$ behaves like $t^{-(a-N)/2}$ for large
$t,$ and estimate (\ref{alb}) is sharp. When $q>(N+2)/(N+1),$ and $u_{0}\in
L^{1}(\mathbb{R}^{N}),$ then $u(.,t)$ behaves as the fundamental solution of
heat equation, see \cite{BeKaLa}.

Our result is new when $u_{0}\in L^{r}(\mathbb{R}^{N}),$ $r>1.$ When
$q>(N+2)/(N+1),$ and $u_{0}$ is bounded and behaves like $\left\vert
x\right\vert ^{-b}$ as $\left\vert x\right\vert \rightarrow\infty$ with
$b\in(a,N),$ it has been shown that $u(.,t)$ behaves as the selfsimilar
solution of the heat equation with initial data $\left\vert x\right\vert
^{-b},$ see \cite{BiGuKa}. In that case $u_{0}\in L^{r}(\mathbb{R}^{N})$ for
any $r>N/b$ and $\int_{\mathbb{R}^{N}}u^{r}(.,t)dx$ behaves like
$t^{-(br-N)/2}$. Thus (\ref{alb}) is sharp as $b\rightarrow a.$
\end{remark}

\section{The Dirichlet problem in $Q_{\Omega,T}$\label{omega}}

Here we study equation (\ref{un}) in case of a regular bounded domain
$\Omega,$ with Dirichlet conditions on $\partial\Omega\times(0,T),$ with
$\nu>0;$ by homothety we can assume $\nu=1$:
\begin{equation}
(D_{\Omega,T})\left\{
\begin{array}
[c]{l}%
u_{t}-\Delta u+|\nabla u|^{q}=0,\quad\text{in}\hspace{0.05in}Q_{\Omega,T},\\
u=0\quad\text{on}\hspace{0.05in}\partial\Omega\times(0,T),
\end{array}
\right.  \label{ome}%
\end{equation}

As in section \ref{Rn}, we study the problem with rough initial data, and
introduce different notions of solutions.\medskip

\subsection{Solutions of the heat equation with $L^{1}$ data}

The regularization method used at Section \ref{Rn} does not provide estimates
up to the boundary. In this section we use another argument: the notion of
\textit{ entropy solution, }introduced in \cite{Pr}, for the problem\textit{
}
\begin{equation}
\left\{
\begin{array}
[c]{l}%
u_{t}-\Delta u=f,\quad\text{in}\hspace{0.05in}Q_{\Omega,s,\tau},\\
u=0\quad\text{on}\hspace{0.05in}\partial\Omega\times(s,\tau),\\
u(.,s)=u_{s}\geqq0
\end{array}
\right.  \label{ggs}%
\end{equation}
when $f$ and $u_{s}$ are integrable, that we recall now. For any $k>0$ and
$\theta\in\mathbb{R},$ we define as usual the truncation function $T_{k}$ and
a primitive $\Theta_{k}$ by
\begin{equation}
T_{k}(\theta)=\max(-k,\min(k,\theta)),\qquad\Theta_{k}(s)=\int_{0}^{r}%
T_{k}(\theta)d\theta. \label{tro}%
\end{equation}

\begin{definition}
Let $s,\tau\in\mathbb{R}$ with $s<\tau,$ and $f\in L^{1}(Q_{\Omega,s,\tau})$
and $u_{s}\in L^{1}(\Omega).$ A function $u\in C(\left[  s,\tau\right]
;L^{1}(\Omega))$ is an entropy solution of the problem (\ref{ggs}) if
$T_{k}(u)\in L^{2}(\left(  s,\tau\right)  ;W_{0}^{1,2}(\Omega))$ for any
$k>0,$ and
\begin{equation}%
\begin{array}
[c]{c}%
\int_{\Omega}\Theta_{k}(u-\varphi)(.,\tau)dx-\int_{\Omega}\Theta_{k}%
(u_{s}-\varphi(.,s)dx+\int_{s}^{\tau}\langle\varphi_{t},T_{k}(u-\varphi
)\rangle dt\\
+\int_{s}^{\tau}\int_{\Omega}(\nabla u.\nabla T_{k}(u-\varphi)-fT_{k}%
(u-\varphi)dxdt\leq0
\end{array}
\label{nnn}%
\end{equation}
for any $\varphi\in L^{2}((s,\tau);W^{1,2}(\Omega))\cap L^{\infty}\left(
Q_{\Omega,\tau}\right)  $ such that $\varphi_{t}\in L^{2}((s,\tau
);W^{-1,2}(\Omega))$.
\end{definition}

Other notions of solutions have been used for this problem, see \cite{BeDa},
recalled below. In fact they are equivalent: here $e^{t\Delta}$ denotes the
semi-group of the heat equation with Dirichlet conditions acting on
$L^{1}\left(  \Omega\right)  ,$

\begin{lemma}
\label{pare} Let -$\infty<s<\tau<\infty,$ $f\in L^{1}(Q_{\Omega,s,\tau})$,
$u_{s}\in L^{1}(\Omega)$ and $u\in C(\left[  s,\tau\right]  ;L^{1}(\Omega)),$
$u(.,s)=u_{s}.$ Then the three properties are equivalent:\medskip

(i) $u\in$ $L^{1}((s,\tau);W_{0}^{1,1}\left(  \Omega\right)  )$, such that
\begin{equation}
u_{t}-\Delta u=f,\quad\text{in}\hspace{0.05in}\mathcal{D}^{\prime}%
(Q_{\Omega,s,\tau}); \label{fil}%
\end{equation}

(ii) $u$ is a mild solution of (\ref{ggs}), that means, for any $t\in\left[
s,\tau\right]  ,$
\begin{equation}
u(.,t)=e^{(t-s)\Delta}u_{s}+\int_{s}^{t}e^{(t-\sigma)\Delta}f(\sigma
)d\sigma\qquad\text{in }L^{1}\left(  \Omega\right)  ; \label{seg}%
\end{equation}

(iii) $u$ is an entropy solution of (\ref{ggs}).\medskip

\noindent Such a solution exists, is unique, and will be called weak solution
of (\ref{ggs}).
\end{lemma}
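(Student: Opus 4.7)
The plan is to take the mild formulation (ii) as the pivot and prove (ii) $\Rightarrow$ (i), (ii) $\Rightarrow$ (iii), and then close the loop with uniqueness arguments. First, I would observe that the right-hand side of (\ref{seg}) defines, by standard $L^{1}$ semigroup theory for $e^{t\Delta}$ with Dirichlet condition, a function in $C([s,\tau];L^{1}(\Omega))$, so a mild solution always exists. Approximating $u_s$ and $f$ by sequences $u_{s,n}\in C_c^\infty(\Omega)$ and $f_n\in C_c^\infty(Q_{\Omega,s,\tau})$ converging in $L^{1}$, the classical solutions $u_n$ coincide with the mild solutions associated to $(u_{s,n},f_n)$; by contractivity of the semigroup one has $u_n\to u$ in $C([s,\tau];L^{1}(\Omega))$. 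Each $u_n$ is simultaneously a classical, weak, mild and entropy solution.

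Next I would apply the Boccardo--Gallou\"et type a priori estimates: testing the equation for $u_n$ by $T_k(u_n)$ gives $(T_k(u_n))$ bounded in $L^2((s,\tau);W_0^{1,2}(\Omega))$ for every $k>0$; testing by $u_n/(1+|u_n|)^{1+\varepsilon}$ yields $(u_n)$ bounded in $L^q((s,\tau);W_0^{1,q}(\Omega))$ for each $q<(N+2)/(N+1)$. A Boccardo--Murat compactness argument, specific to parabolic $L^{1}$ problems, then gives $\nabla u_n\to\nabla u$ almost everywhere in $Q_{\Omega,s,\tau}$. Passing to the limit in the weak formulation (\ref{fil}) and in the entropy formulation (\ref{nnn}) for $u_n$ shows that $u$ satisfies both (i) and (iii), so $u\in L^1_{loc}((s,\tau);W_0^{1,1}(\Omega))$ and $T_k(u)\in L^2((s,\tau);W_0^{1,2}(\Omega))$ for every $k$.

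For the reverse implications I would use uniqueness. Suppose $u_1,u_2$ are two weak solutions of (\ref{ggs}) with the same data; their difference $w=u_1-u_2$ lies in $L^1_{loc}((s,\tau);W_0^{1,1}(\Omega))\cap C([s,\tau];L^1(\Omega))$, with $w(.,s)=0$ and $w_t-\Delta w=0$ in $\mathcal{D}'(Q_{\Omega,s,\tau})$. Given arbitrary $\varphi\in C_c^\infty(Q_{\Omega,s,\tau})$, I would solve the backward adjoint problem $-\psi_t-\Delta\psi=\varphi$, $\psi(.,\tau)=0$, $\psi=0$ on $\partial\Omega\times(s,\tau)$; parabolic regularity produces $\psi\in L^\infty\cap L^1((s,\tau);W^{1,\infty}(\Omega))$, an admissible test function against $w$. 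Integration by parts yields $\iint w\varphi\,dxdt=0$, hence $w=0$, so any weak solution coincides with the mild one built above. For (iii), I would invoke Prignet's theorem on uniqueness of entropy solutions, which together with the existence of an entropy solution matching $u$ gives (iii) $\Rightarrow$ (ii).

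The main obstacle is the compactness step: upgrading the weak $L^q$ bound on $\nabla u_n$ to almost-everywhere convergence, because without it one cannot take the limit of $\int\nabla u_n\cdot\nabla T_k(u_n-\varphi)\,dxdt$ in (\ref{nnn}). This is handled by the now-standard Boccardo--Murat argument exploiting the equation itself and the strong convergence of $T_k(u_n)$ in $L^2((s,\tau);W_0^{1,2}(\Omega))$. Once this is in hand, the remaining limits are routine applications of Fatou and Lebesgue, and the duality uniqueness is classical.
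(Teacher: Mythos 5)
Your argument is correct and follows essentially the same route as the paper, which compresses the whole proof into citations: existence and uniqueness of weak solutions of the linear problem is \cite[Lemma 3.4]{BaPi} (proved there by precisely the duality argument with the backward adjoint problem that you sketch), and existence and uniqueness of entropy solutions is \cite{BlMu}, \cite{Pr} (proved by the smooth approximation, Boccardo--Gallou\"{e}t estimates and Boccardo--Murat compactness you describe). Since each of the three notions admits a unique solution and your approximating construction produces a single object satisfying all three formulations, the equivalence follows exactly as in the paper.
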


\begin{proof}
It follows from the existence and uniqueness of the solutions of (i) from
\cite[Lemma 3.4]{BaPi}, as noticed in \cite{BeDa}, and of the entropy
solutions, see \cite{BlMu}.$\medskip$
\end{proof}

As a consequence, when $u$ is bounded, we can admit test functions of the form
$u^{\alpha}:$

\begin{lemma}
\label{dil}Let $s,\tau\in\mathbb{R}$ with $s<\tau,$ and $f\in L^{1}%
(Q_{\Omega,s,\tau})$ and $u$ be any nonnegative \textbf{bounded }weak solution
in $Q_{\Omega,s,\tau}$ of (\ref{ggs}). \medskip

Then, for any $\alpha>0$, there holds $u^{\alpha-1}\left\vert \nabla
u\right\vert ^{2}\in L^{1}(Q_{\Omega,s,\tau})$ and
\begin{equation}
\frac{1}{\alpha+1}\int_{\Omega}u^{\alpha+1}(.,\tau))dx+\alpha%
{\displaystyle\int}
\int_{Q_{\Omega,s,\tau}}u^{\alpha-1}\left\vert \nabla u\right\vert
^{2}dxdt=\frac{1}{\alpha+1}\int_{\Omega}u^{\alpha+1}(.,s))dx+\int_{s}^{\tau
}\int_{\Omega}fu^{\alpha}dxdt. \label{hol}%
\end{equation}

\end{lemma}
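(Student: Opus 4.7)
The plan is to combine a Steklov time-regularization of the equation with a shifted power function that keeps the test function Lipschitz near $\{u=0\}$. First I would use the hypothesis that $u$ is bounded: setting $M=\Vert u\Vert_{L^\infty(Q_{\Omega,s,\tau})}$, Lemma \ref{pare} gives that $u$ is an entropy solution with $T_k(u)\in L^2((s,\tau);W_0^{1,2}(\Omega))$ for every $k>0$, so taking $k\geq M$ yields $u\in L^2((s,\tau);W_0^{1,2}(\Omega))$. Let $u_h(x,t)=h^{-1}\int_t^{t+h}u(x,\sigma)\,d\sigma$ and let $f_h$ be the corresponding Steklov average, both for $t\in(s,\tau-h)$; then $u_h$ inherits the vanishing trace, $(u_h)_t$ and $\nabla u_h$ lie in $L^2((s,\tau-h)\times\Omega)$, and the weak formulation turns into the pointwise-in-$t$ identity
\[
\int_\Omega (u_h)_t\,\psi\,dx+\int_\Omega\nabla u_h\cdot\nabla\psi\,dx=\int_\Omega f_h\,\psi\,dx
\]
for every $\psi\in W_0^{1,2}(\Omega)\cap L^\infty(\Omega)$ and a.e.\ $t$.

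Next I would plug in $\psi=\phi_\varepsilon(u_h)-\varepsilon^\alpha$ with $\phi_\varepsilon(r)=(r+\varepsilon)^\alpha$, which is admissible since $u_h$ has zero trace and $\phi_\varepsilon$ is Lipschitz on $[0,M+1]$. Writing $\Phi_\varepsilon(r)=\bigl((r+\varepsilon)^{\alpha+1}-\varepsilon^{\alpha+1}\bigr)/(\alpha+1)$, the time term becomes $\partial_t\Phi_\varepsilon(u_h)-\varepsilon^\alpha\partial_t u_h$, so integrating in time over $(s,t)$ with $t\in(s,\tau-h)$ yields
\begin{align*}
&\int_\Omega\Phi_\varepsilon(u_h(\cdot,t))\,dx-\int_\Omega\Phi_\varepsilon(u_h(\cdot,s))\,dx-\varepsilon^\alpha\!\int_\Omega\bigl(u_h(\cdot,t)-u_h(\cdot,s)\bigr)\,dx\\
&\quad+\alpha\int_s^t\!\int_\Omega (u_h+\varepsilon)^{\alpha-1}|\nabla u_h|^2\,dxd\sigma=\int_s^t\!\int_\Omega f_h\bigl(\phi_\varepsilon(u_h)-\varepsilon^\alpha\bigr)\,dxd\sigma.
\end{align*}

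Then I would pass successively to the limits $h\to 0$ and $\varepsilon\to 0$. As $h\to 0$, standard Steklov properties give $u_h\to u$ and $\nabla u_h\to\nabla u$ in $L^2$, $f_h\to f$ in $L^1$, while the continuity $u\in C([s,\tau];L^1(\Omega))$ forces $u_h(\cdot,s)\to u_s$ and $u_h(\cdot,t)\to u(\cdot,t)$ in $L^1(\Omega)$. The uniform bound $\Vert u_h\Vert_\infty\leq M$ permits bounded convergence in the $\Phi_\varepsilon$ and right-hand terms, and the dissipation passes to the limit by writing it as the product of the uniformly bounded factor $(u_h+\varepsilon)^{\alpha-1}$ with the $L^1$-convergent quantity $|\nabla u_h|^2$. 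Letting $\varepsilon\to 0$ afterwards, dominated convergence handles every term except the dissipation, for which I would use monotone convergence when $0<\alpha<1$ (since $(u+\varepsilon)^{\alpha-1}\nearrow u^{\alpha-1}$) and dominated convergence when $\alpha\geq 1$.

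The main obstacle is the degeneracy of $r\mapsto r^{\alpha-1}$ at $0$ when $0<\alpha<1$: one cannot use $u^\alpha$ directly as a test function because $\nabla(u^\alpha)=\alpha u^{\alpha-1}\nabla u$ is not a priori in $L^2$, and the integrability $u^{\alpha-1}|\nabla u|^2\in L^1$ is itself part of the conclusion. The $\varepsilon$-shift $(u+\varepsilon)^\alpha-\varepsilon^\alpha$ is the standard remedy that keeps the test function Lipschitz and admissible, while the monotone convergence step as $\varepsilon\to 0$ simultaneously establishes the claimed integrability and the identity.
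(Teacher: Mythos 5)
Your proof is correct and follows essentially the same route as the paper: both use the $\delta$-shifted power $(u+\delta)^{\alpha}-\delta^{\alpha}$ (your $\varepsilon$) as an admissible test function, exploiting the boundedness of $u$ to get $u\in L^{2}((s,\tau);W_{0}^{1,2}(\Omega))$, and then pass to the limit $\delta\to0$ by Fatou/monotone convergence for the dissipation term and dominated convergence elsewhere. The only difference is technical bookkeeping for the time term: you justify the chain rule via Steklov averages, while the paper invokes the integration-by-parts formula of \cite[Lemma 7.1]{DrPr} within the entropy/renormalized framework; both are standard and equivalent here.
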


\begin{proof}
We have $u\in L^{2}((s,\tau);W_{0}^{1,2}(\Omega))\cap L^{\infty}\left(
Q_{\Omega,s,\tau}\right)  ,$ and $u_{t}\in L^{2}((s,\tau);W^{-1,2}%
(\Omega))+L^{1}\left(  Q_{\Omega,s,\tau}\right)  .$ Then any function
$\varphi\in L^{2}((s,\tau);W_{0}^{1,2}(\Omega))\cap L^{\infty}\left(
Q_{\Omega,s,\tau}\right)  $ is admissible in equation (\ref{fil}). In
particular for any $\alpha>0$, we can take $\varphi=M_{\alpha,\delta
}(u)=(u+\delta)^{\alpha}-\delta^{\alpha},$ with $\delta>0.$ Integrating on
$\left[  s,\tau\right]  $ we deduce that
\[
\int_{s}^{\tau}<u_{t},\varphi>+\alpha%
{\displaystyle\int}
\int_{Q_{\Omega,s,\tau}}(u+\delta)^{\alpha-1}\left\vert \nabla u\right\vert
^{2}dxdt=\int_{s}^{\tau}\int_{\Omega}fM_{\alpha,\delta}(u)dxdt.
\]
Let $k>0$ such that $\sup_{Q_{\Omega,s,\tau}}u\leqq k,$ thus $u=T_{k}(u).$ The
function $\theta\mapsto M(\theta)=(T_{k}(\theta)+\delta)^{\alpha}%
-\delta^{\alpha}$ is continuous on $\mathbb{R}^{+}$and piecewise $C^{1}$ such
that $M(0)=0$ and $M^{\prime}$ has a compact support. Denoting $\mathcal{M}%
_{\alpha,\delta}(r)=(u+\delta)^{\alpha+1}/(\alpha+1)-\delta^{\alpha}u,$ we can
integrate by parts from \cite[Lemma 7.1]{DrPr}, and deduce that
\[
\int_{\Omega}\mathcal{M}_{\alpha,\delta}(u)(.,\tau))dx-\int_{\Omega
}\mathcal{M}_{\alpha,\delta}(u)(.,s))dx+\alpha%
{\displaystyle\int}
\int_{Q_{\Omega,s,\tau}}(u+\delta)^{\alpha-1}\left\vert \nabla u\right\vert
^{2}dxdt=\int_{s}^{\tau}\int_{\Omega}fM_{\alpha,\delta}(u)dxdt;
\]
We can go to the limit as $\delta\rightarrow0$ from the Fatou Lemma, and then
from the dominated convergence theorem. Thus (\ref{hol}) holds for $\alpha>0.$
\end{proof}

\begin{remark}
From \cite{DrPr}, the notion of entropy solution of (\ref{ggs}) is also
equivalent to the notion of renormalized solution, that we develop in Section
\ref{quas}. Lemma \ref{dil} is a special case of a much more general property
of the truncates when $u$ is not necessarily bounded, see Lemma \ref{admi}.
\end{remark}

\subsection{Different notions of solutions of problem $(D_{\Omega,T})$}

\begin{definition}
We say that $u$ is a \textbf{weak } \textbf{solution} of the problem
$(D_{\Omega,T})$ if $u\in C((0,T);L^{1}\left(  \Omega\right)  )\cap
L_{loc}^{1}((0,T);W_{0}^{1,1}\left(  \Omega\right)  ),$ such that $|\nabla
u|^{q}\in L_{loc}^{1}((0,T);L^{1}\left(  \Omega\right)  )$ and $u$ satisfies
\begin{equation}
u_{t}-\Delta u+|\nabla u|^{q}=0,\quad\text{in}\hspace{0.05in}\mathcal{D}%
^{\prime}(Q_{\Omega,T}). \label{dis}%
\end{equation}

\end{definition}

Next we study the Cauchy-Dirichlet problem%
\begin{equation}
\left\{
\begin{array}
[c]{l}%
u_{t}-\Delta u+|\nabla u|^{q}=0,\quad\text{in}\hspace{0.05in}Q_{\Omega,T},\\
u=0\quad\text{on}\hspace{0.05in}\partial\Omega\times(0,T),\\
u(x,0)=u_{0}\geqq0
\end{array}
\right.  \label{def}%
\end{equation}
with $u_{0}\in L^{r}\left(  \Omega\right)  ,$ $r\geqq1,$ or only $u_{0}%
\in\mathcal{M}_{b}^{+}(\Omega).$ Here in any case $u_{0}\in\mathcal{M}_{b}%
^{+}(\Omega).$

\begin{definition}
If $u_{0}\in L^{r}(\Omega),r\geqq1,$ we say that $u$ is a \textbf{weak }%
$L^{r}$\textbf{ solution } of problem (\ref{def}) if it is a weak solution of
$(D_{\Omega,T})$, such that the extension of $u$ by $u_{0}$ at $t=0$ satisfies
$u\in C\left(  \left[  0,T\right)  ;L^{r}(\Omega\right)  ).$
\end{definition}

\begin{definition}
For any $u_{0}\in\mathcal{M}_{b}^{+}(\Omega),$ we say that $u$ is a weak
$\mathcal{M}$ solution of problem (\ref{def}) if it is a weak solution of
$(D_{\Omega,T})$, such that
\begin{equation}
\lim_{t\rightarrow0}\int_{\Omega}u(.,t)\psi dx=\int_{\Omega}\psi du_{0}%
,\qquad\forall\psi\in C_{b}(\Omega). \label{jkl}%
\end{equation}

\end{definition}

Semi-group type solutions have been introduced in \cite{BeDa}, see also
\cite{Al}. For $u_{0}\in\mathcal{M}_{b}^{+}(\Omega),$ we set $e^{t\Delta}%
u_{0}=\int_{\Omega}g_{\Omega}(.,y,t)du_{0}(y),$ where $g_{\Omega}$ is the heat
kernel with Dirichlet conditions on $\partial\Omega.$

\begin{definition}
\label{mildom}For any $u_{0}\in\mathcal{M}_{b}^{+}(\Omega),$ a function $u$ is
a \textbf{mild } \textbf{solution }of problem (\ref{def}) if $u\in
C((0,T);L^{1}\left(  \Omega\right)  )$, and $|\nabla u|^{q}\in L_{loc}%
^{1}(\left[  0,T\right)  ;L^{1}\left(  \Omega\right)  )$ and
\begin{equation}
u(.,t)=e^{t\Delta}u_{0}(.)-\int_{0}^{t}e^{(t-s)\Delta}|\nabla u(.,s)|^{q}%
ds\qquad\text{in }L^{1}\left(  \Omega\right)  , \label{abc}%
\end{equation}

\end{definition}

\begin{remark}
As it was shown in \cite[p.1420]{BeDa}, from Lemma \ref{pare},
\[
u\text{ is a mild\textbf{ }solution}\Longleftrightarrow u\text{ is a weak
}\mathcal{M}\text{ solution such that }|\nabla u|^{q}\in L_{loc}^{1}(\left[
0,T\right)  ;L^{1}\left(  \Omega\right)  );
\]
and then $u\in L_{loc}^{1}(\left[  0,T\right)  ;W_{0}^{1,1}\left(
\Omega\right)  ).$
\end{remark}

\begin{remark}
\label{few}As in Remark \ref{all}, the definition of mild solution requires an
integrability property of the gradient up to $t=0$, namely $|\nabla u|^{q}\in
L_{loc}^{1}(\left[  0,T\right)  ;L^{1}\left(  \Omega\right)  ).$ The
definition of weak solution only assumes that $|\nabla u|^{q}\in L_{loc}%
^{1}((0,T);L^{1}\left(  \Omega\right)  ).$
\end{remark}

\subsection{Decay and regularizing effect}

Here $\Omega$ is bounded, then the situation is simpler than in $\mathbb{R}%
^{N}$: indeed we take benefit of the regularizing effect of the semi-group
$e^{t\Delta}$ associated with the first eigenvalue $\lambda_{1}$ of the
Laplacian, and also of the inclusion $L^{r}(\Omega)\subset$ $L^{1}(\Omega).$

\begin{lemma}
\label{sim}Let $q>1,$ and $u_{0}\in L^{r}(\Omega),$ $r\geqq1.$ 1) Let $u$ be
any non-negative weak $L^{r}$\textbf{ }solution of problem (\ref{def}%
).$\medskip$

\noindent(i) Then $u(.,t)\in L^{\infty}(\Omega)$ for any $t\in\left(
0,T\right)  ,$ and
\begin{equation}
\Vert u(.,t)\Vert_{L^{r}(\Omega)}\leqq Ce^{-\lambda_{1}t}\Vert u_{0}%
\Vert_{L^{r}(\Omega)},\qquad\Vert u(.,t)\Vert_{L^{\infty}(\Omega)}\leqq
Ct^{-\frac{N}{2r}}e^{-\lambda_{1}t}\Vert u_{0}\Vert_{L^{r}(\Omega)}.
\label{clo}%
\end{equation}
(ii) Moreover $|\nabla u|^{q}\in L_{loc}^{1}(\left[  0,T\right)  ;L^{1}\left(
\Omega\right)  ),$ and
\begin{equation}
\int_{\Omega}u(.,t)dx+\int_{0}^{t}\int_{\Omega}|\nabla u|^{q}dxdt\leqq
\int_{\Omega}u_{0}dx. \label{ple}%
\end{equation}
If $r>1,$ then \textbf{ }$u^{r-1}|\nabla u|^{q}\in L_{loc}^{1}(\left[
0,T\right)  ;L^{1}\left(  \Omega\right)  )$ and $u^{r-2}|\nabla u|^{2}\in
L_{loc}^{1}(\left[  0,T\right)  ;L^{1}\left(  \Omega\right)  ),$ and
\begin{equation}
\frac{1}{r}\int_{\Omega}u^{r}(.,t)dx+\int_{0}^{t}\int_{\Omega}u^{r-1}|\nabla
u|^{q}dxdt+(r-1)\int_{0}^{t}\int_{\Omega}u^{r-2}|\nabla u|^{2}dxdt=\frac{1}%
{r}\int_{\Omega}u_{0}^{r}dx, \label{plo}%
\end{equation}
As a consequence, $u^{q-1+r}\in L_{loc}^{1}((\left[  0,T\right)  ;W_{0}%
^{1,1}\left(  \Omega\right)  ).\medskip$

\noindent2) Let $u_{0}\in\mathcal{M}_{b}^{+}(\Omega)$ and $u$ be any
non-negative weak $\mathcal{M}$ solution of problem (\ref{def}). Then
(\ref{clo}) and (\ref{ple}) still hold as in case $u_{0}\in L^{1}(\Omega),$
where the norm $\Vert u_{0}\Vert_{L^{1}(\Omega)}$ is replaced by $%
{\displaystyle\int_{\Omega}}
du_{0}.$ In particular $u$ is a mild solution.
\end{lemma}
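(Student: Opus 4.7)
The plan is to exploit comparison with the Dirichlet heat semigroup. For part 1)(i), I would observe that $f:=|\nabla u|^{q}\geq 0$ lies in $L^{1}_{\mathrm{loc}}((0,T);L^{1}(\Omega))$ by definition of weak solution, so for any $0<s<\tau<T$, $u$ is a weak solution of $u_{t}-\Delta u=-f$ on $Q_{\Omega,s,\tau}$ in the sense of Lemma \ref{pare}. That lemma identifies $u$ with the mild solution, so
\[
u(\cdot,t)=e^{(t-s)\Delta}u(\cdot,s)-\int_{s}^{t}e^{(t-\sigma)\Delta}|\nabla u(\cdot,\sigma)|^{q}\,d\sigma\;\leq\;e^{(t-s)\Delta}u(\cdot,s)
\]
in $L^{1}(\Omega)$, the Dirichlet semigroup being positivity-preserving. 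Letting $s\to 0$ and using $u\in C([0,T);L^{r}(\Omega))$ together with strong continuity of $e^{t\Delta}$ yields $u(\cdot,t)\leq e^{t\Delta}u_{0}$. The two estimates (\ref{clo}) then follow from the classical bound $\|e^{t\Delta}\|_{L^{r}\to L^{r}}\leq Ce^{-\lambda_{1}t}$ and the ultracontractive smoothing $\|e^{t\Delta}\|_{L^{r}\to L^{\infty}}\leq Ct^{-N/(2r)}e^{-\lambda_{1}t}$ for the Dirichlet heat semigroup on a bounded domain.

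For part 1)(ii), knowing from (i) that $u$ is bounded on $[s,\tau]\times\Omega$ for each $0<s<\tau<T$, I would apply Lemma \ref{dil} on $Q_{\Omega,s,\tau}$ with source $-|\nabla u|^{q}\in L^{1}$ and exponent $\alpha=r-1>0$ (for $r>1$), obtaining
\[
\tfrac{1}{r}\!\int_{\Omega}\! u^{r}(\cdot,t)\,dx+(r-1)\!\int_{s}^{t}\!\!\int_{\Omega}\! u^{r-2}|\nabla u|^{2}+\!\int_{s}^{t}\!\!\int_{\Omega}\! u^{r-1}|\nabla u|^{q}=\tfrac{1}{r}\!\int_{\Omega}\! u^{r}(\cdot,s)\,dx.
\]
Sending $s\to 0$, the right-hand side tends to $r^{-1}\|u_{0}\|_{L^{r}}^{r}$ by continuity in $L^{r}$; Fatou's lemma then forces both space-time integrals on the left to be finite, and a second application of the same identity combined with monotone convergence gives (\ref{plo}). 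The case $r=1$ is handled analogously by testing the equation against $\varphi\equiv 1$. The last statement $u^{q-1+r}\in L^{1}_{\mathrm{loc}}([0,T);W_{0}^{1,1}(\Omega))$ follows at once because $u$ is locally bounded on $(0,T)\times\Omega$, so $u^{q+r-2}$ is locally bounded and $|\nabla u^{q-1+r}|=(q-1+r)\,u^{q+r-2}|\nabla u|$ is controlled in $L^{1}$ by H\"older and the integrability of $|\nabla u|^{q}$.

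For part 2), given $u_{0}\in\mathcal{M}_{b}^{+}(\Omega)$ and a weak $\mathcal{M}$-solution $u$, I would fix $\epsilon>0$ and apply part 1) on $(\epsilon,T)$ starting from $u(\cdot,\epsilon)\in L^{1}(\Omega)$. This yields $u(\cdot,t)\leq e^{(t-\epsilon)\Delta}u(\cdot,\epsilon)$ for $t>\epsilon$. To send $\epsilon\to 0$, I would write, for fixed $x\in\Omega$,
\[
(e^{(t-\epsilon)\Delta}u(\cdot,\epsilon))(x)=\int_{\Omega}g_{\Omega}(x,y,t-\epsilon)\,u(y,\epsilon)\,dy,
\]
and observe that $y\mapsto g_{\Omega}(x,y,t-\epsilon)$ converges uniformly on $\overline{\Omega}$ to $g_{\Omega}(x,\cdot,t)\in C_{b}(\Omega)$; combined with the weak-$*$ convergence $u(\cdot,\epsilon)\to u_{0}$ from (\ref{jkl}), the right-hand side tends to $(e^{t\Delta}u_{0})(x)$. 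Hence $u(\cdot,t)\leq e^{t\Delta}u_{0}$ and (\ref{clo}) follows with $\int\! du_{0}$ replacing $\|u_{0}\|_{L^{1}}$. Applying the $r=1$ energy identity on $(\epsilon,t)$ and passing $\epsilon\to 0$ (using $\int\! u(\cdot,\epsilon)\,dx\to\int\! du_{0}$ together with Fatou on the gradient integral) gives (\ref{ple}); this in turn certifies that $|\nabla u|^{q}\in L^{1}_{\mathrm{loc}}([0,T);L^{1}(\Omega))$, so letting $\epsilon\to 0$ in the Duhamel identity on $(\epsilon,t)$ identifies $u$ as a mild solution.

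The main obstacle is the rigorous passage to the limit $\epsilon\to 0$ in the measure-data case: one must combine the weak-$*$ convergence of $u(\cdot,\epsilon)$ to $u_{0}$ with the smoothing of $e^{(t-\epsilon)\Delta}$ to produce a genuine $L^{\infty}$ bound by $e^{t\Delta}u_{0}$. Boundedness of $\Omega$ and continuity of the Dirichlet heat kernel up to $\overline{\Omega}$ in the spatial variable make this possible without tail assumptions, in contrast to the $\mathbb{R}^{N}$ case.
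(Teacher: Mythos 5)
Your overall strategy coincides with the paper's: Lemma \ref{pare} identifies $u$ with the mild solution on $Q_{\Omega,\epsilon,\tau}$, positivity of the Dirichlet semigroup gives $u(\cdot,t)\leqq e^{(t-\epsilon)\Delta}u(\cdot,\epsilon)$ and hence (\ref{clo}) as $\epsilon\rightarrow0$; Lemma \ref{dil} with $\alpha=r-1$ gives (\ref{plo}); and part 2) follows by restarting at $t=\epsilon$ and using $\lim_{\epsilon\rightarrow0}\Vert u(\cdot,\epsilon)\Vert_{L^{1}(\Omega)}=\int_{\Omega}du_{0}$ (your kernel/weak-$*$ argument is a more explicit variant of the paper's one-line limit, and it is fine). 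However, two steps are not justified as written. First, estimate (\ref{ple}) --- asserted for every $r\geqq1$, and exactly what certifies $|\nabla u|^{q}\in L_{loc}^{1}(\left[0,T\right);L^{1}(\Omega))$ and hence mildness in part 2) --- cannot be obtained by ``testing the equation against $\varphi\equiv1$'': the constant does not belong to $W_{0}^{1,2}(\Omega)$, and at this level of regularity the sign of the boundary flux is not available. The Duhamel route fares no better: integrating (\ref{seg}) over $\Omega$ produces the term $\int_{\epsilon}^{t}\int_{\Omega}e^{(t-\sigma)\Delta}|\nabla u(\cdot,\sigma)|^{q}\,dx\,d\sigma$, which is \emph{smaller} than $\int_{\epsilon}^{t}\int_{\Omega}|\nabla u|^{q}\,dx\,d\sigma$ for the mass-decreasing Dirichlet semigroup, so the inequality comes out in the wrong direction. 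The paper's fix is to keep $\rho>1$ in the identity of Lemma \ref{dil}, let $\rho\rightarrow1^{+}$, and use Fatou on the gradient term (with dominated convergence on the right-hand side, $u$ being bounded on $Q_{\Omega,\epsilon,\tau}$); you need this step.

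Second, the claim $u^{q-1+r}\in L_{loc}^{1}(\left[0,T\right);W_{0}^{1,1}(\Omega))$ does not ``follow at once'' from local boundedness: membership in $L_{loc}^{1}(\left[0,T\right);\cdot)$ includes integrability up to $t=0$, where $u$ is not bounded. Your H\"older bound reads $\iint u^{q+r-2}|\nabla u|\leqq\bigl(\iint u^{r-1}|\nabla u|^{q}\bigr)^{1/q}\bigl(\iint u^{q+r-1}\bigr)^{1/q^{\prime}}$, so it presupposes $u^{q+r-1}\in L^{1}(Q_{\Omega,\tau})$, which is precisely the nontrivial point. The paper obtains this (as in Theorem \ref{decay}(iii)) from the Gagliardo--Nirenberg inequality applied to $v=u^{(q-1+r)/q}$, using that $\iint|\nabla v|^{q}$ is finite up to $t=0$ by (\ref{plo}) and that $v\in L^{\infty}((0,T);L^{rq/(q-1+r)}(\Omega))$ by (\ref{des})-type decay. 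With these two repairs your argument matches the paper's proof.
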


\begin{proof}
1) (i) Let $0<\epsilon<\tau<T.$ Since $u$ is a weak solution of $(D_{\Omega
,T})$, we can apply Lemma \ref{pare} with $f=-|\nabla u|^{q}$ in
$Q_{\Omega,\epsilon,\tau}.$ Thus $u$ is a mild solution of the problem in
$Q_{\Omega,\epsilon,\tau}:$ for any $t\in\left[  \epsilon,\tau\right]  ,$
\[
u(.,t)=e^{(t-\epsilon)\Delta}u(.,\epsilon)-\int_{\epsilon}^{t}e^{(t-\sigma
)\Delta}|\nabla u|^{q}d\sigma\qquad\text{in }L^{1}\left(  \Omega\right)  .
\]
therefore $u(.,t)\leqq e^{(t-\epsilon)\Delta}u(.,\epsilon).$ From our
assumptions $u\in C\left(  \left[  0,T\right)  ;L^{r}(\Omega\right)  ),$ we
deduce that $u(.,t)\leqq e^{t\Delta}u_{0}$ as $\epsilon\rightarrow0.$ Then
(\ref{clo}) follows from the properties of the semi-group $e^{t\Delta}%
$.$\medskip$

(ii) The function $u$ is bounded in $Q_{\Omega,s,\tau},$ thus from Lemma
\ref{dil}, for any $\rho>1,$
\begin{equation}
\frac{1}{\rho}\int_{\Omega}u^{\rho}(.,t)dx+\int_{\epsilon}^{t}\int_{\Omega
}u^{\rho-1}|\nabla u|^{q}dxdt+(\rho-1)\int_{\epsilon}^{t}\int_{\Omega}%
u^{\rho-2}|\nabla u|^{2}dxdt=\frac{1}{\rho}\int_{\Omega}u^{\rho}%
(.,\epsilon)dx. \label{plm}%
\end{equation}
As $\rho\rightarrow1,$ we deduce that $|\nabla u|^{q}\in L^{1}\left(
Q_{\Omega,\epsilon,\tau}\right)  $ from the Fatou Lemma, and
\[
\int_{\Omega}u(.,t)dx+\int_{\epsilon}^{t}\int_{\Omega}|\nabla u|^{q}%
dxdt\leqq\int_{\Omega}u(.,\epsilon)dx.
\]
As $\epsilon\rightarrow0$ we deduce that $|\nabla u|^{q}\in L^{1}\left(
Q_{\Omega,\tau}\right)  $ and (\ref{ple}) holds. If $r>1,$ we can take
$\rho=r$ in (\ref{plm}) and obtain (\ref{plo}) as $\epsilon\rightarrow0$. Then
$u^{q-1+r}\in L_{loc}^{1}((\left[  0,T\right)  ;W_{0}^{1,1}\left(
\Omega\right)  )$ as in the case of $\mathbb{R}^{N}.$

\noindent2) The same estimates hold because $\lim_{\epsilon\rightarrow0}\Vert
u(.,\epsilon)\Vert_{L^{1}(\Omega)}=%
{\displaystyle\int_{\Omega}}
du_{0}.$\medskip
\end{proof}

\begin{theorem}
\label{deco}Let $q>1$ and $u_{0}\in L^{r}(\Omega),$ $r\geqq1.$ 1) Let $u$ be
any non-negative weak $L^{r}$\textbf{ }solution of problem (\ref{def}). Then%
\begin{equation}
\Vert u(.,t)\Vert_{L^{\infty}(\mathbb{R}^{N})}\leqq\left\{
\begin{array}
[c]{ccc}%
Ct^{-\sigma_{r,q,N}}\Vert u_{0}\Vert_{L^{r}(\Omega)}^{\varpi_{r,q,N}}, &
C=C(N,q,r), & \text{if }q\neq N,\\
C_{\varepsilon}t^{-(1+\varepsilon)\sigma_{r,N,N}}\Vert u_{0}\Vert
_{L^{r}(\Omega)}^{(1+\varepsilon)\varpi_{r,q,N}}, & \forall\varepsilon>0,\quad
C_{\varepsilon}=C(N,q,r,\varepsilon), & \text{if }q=N,
\end{array}
\right.  \label{yuc}%
\end{equation}
where $\sigma_{r,q,N},\varpi_{r,q,N}$ are given at (\ref{sigr}).$\medskip$

\noindent2) Any non-negative weak solution $u$ of $(D_{\Omega,T})$ satisfies
the universal estimate, where $C=C(N,q,\left\vert \Omega\right\vert ),$%
\begin{equation}
\Vert u(.,t)\Vert_{L^{\infty}(\Omega)}\leqq Ct^{-\frac{1}{q-1}}. \label{wad}%
\end{equation}

\end{theorem}

\begin{proof}
1) First assume $q<N.$ For any $\alpha>0,$ setting $\rho=1+\alpha,$ and
$0<\epsilon\leqq s<t<T,$ setting $\beta=1+\alpha/q,$ we obtain, from
(\ref{plm}),
\[
\frac{1}{\alpha+1}\int_{\Omega}u^{\alpha+1}(.,t)dx+\frac{1}{\beta^{q}}\int%
_{s}^{t}\int_{\Omega}\left\vert \nabla(u^{\beta})\right\vert ^{q}%
dxdt\leqq\frac{1}{\alpha+1}\int_{\Omega}u^{\alpha+1}(.,s)dx.
\]
Then $u^{\beta}(.,t)\in W_{0}^{1,q}\left(  \Omega\right)  ,$ since $u(.,t)\in
L^{\infty}(Q_{\Omega,s,\tau})\cap W_{0}^{1,1}\left(  \Omega\right)  ).$ From
the Sobolev injection of $W_{0}^{1,q}\left(  \Omega\right)  $ into
$L^{Nq/(N-q)}\left(  \Omega\right)  $,
\[
\frac{1}{\alpha+1}\int_{\Omega}u^{\alpha+1}(.,t)dx+\frac{C(N,q)}{\beta^{q}%
}\int_{s}^{t}(\int_{\Omega}u^{\beta\frac{Nq}{N-q}}(.,\sigma)dx)^{\frac{N-q}%
{N}}dt\leqq\frac{1}{\alpha+1}\int_{\Omega}u^{\alpha+1}(.,s)dx.
\]
From Lemma \ref{prod} on $\left[  \epsilon,T\right)  $ with $m=q$ and
$\theta=N/(N-q)$, we obtain estimates for $\epsilon<t<T:$
\[
\Vert u(.,t)\Vert_{L^{\infty}(\Omega)}\leqq C(t-\epsilon)^{-\sigma_{r,q}%
,N}\Vert u(.,\epsilon)\Vert_{L^{r}(\Omega)}^{\varpi_{r,q,N}},\qquad\Vert
u(.,t)\Vert_{L^{\infty}(\Omega)}\leqq C(t-\epsilon)^{-\frac{1}{q-1}}.
\]
and we deduce (\ref{yuc}) and (\ref{wad}) as $\epsilon\rightarrow0$. In the
case $q=N$ the same conclusion follows from Lemma \ref{prod} with any
$\theta>1.$ If $q>N$ we proceed as in Theorem \ref{bound} by applying Lemma
\ref{gani}.\medskip

\noindent2) Let $u$ be any weak solution of $(D_{\Omega,T}).$ Since $u\in
C(\left[  \epsilon,T\right)  ;L^{1}(\Omega))$ for $\epsilon>0,$ we find, for
any $t\in\left[  \epsilon,T\right)  ,$
\[
\Vert u(.,t)\Vert_{L^{\infty}(\Omega)}\leqq C(t-\epsilon)^{-\frac{1}{q-1}}%
\]
with $C=C(N,q),$ and deduce (\ref{wad}) for any $t\in\left(  0,T\right)  $ as
$\epsilon\rightarrow0.$
\end{proof}

\begin{remark}
In particular we find again estimate (\ref{wad}) obtained in \cite{Por} in
case $q<2$, for solutions $u$ such that $u\in C((0,T);L^{2}\left(
\Omega\right)  )\cap L^{2}((0,T);W_{0}^{1,2}\left(  \Omega\right)  ),$ and
$(u-k)^{+}$ is admissible as a test function in the equation; those conditions
imply integrability properties of $u|\nabla u|^{q}.$ Our result is valid
without any of these conditions.
\end{remark}

\subsection{Existence and uniqueness results for $q\leqq2$}

From estimate (\ref{wad}), we deduce new convergence results when $q\leqq2$:

\begin{corollary}
\label{patt}Assume $1<q\leqq2.$ Then \medskip

\noindent(i) any weak solution $u$ of problem $(D_{\Omega,T})$ satisfies $u\in
C^{2,1}\left(  Q_{\Omega,T}\right)  \cap C^{1,0}\left(  \overline{\Omega
}\times\left(  0,T\right)  \right)  ;$\medskip

\noindent(ii) for any sequence of weak solutions $\left(  u_{n}\right)  $ of
$(D_{\Omega,T}),$ one can extract a subsequence converging in $C_{{}}%
^{2,1}(Q_{\Omega,T})\cap C^{1,0}\left(  \overline{\Omega}\times\left(
0,T\right)  \right)  $ to a weak solution $u$ of $(D_{\Omega,T})$.
\end{corollary}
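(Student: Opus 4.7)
The plan is to combine the universal $L^{\infty}$ estimate \eqref{wad} with interior Schauder-type regularity in the spirit of \cite[Theorem 2.16]{BiDao1} and with the universal boundary gradient estimates \eqref{gd} from \cite{CLS}, and then to pass to the limit in (ii) by a standard Ascoli--Arzelà compactness argument.

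For (i), let $u$ be any weak solution of $(D_{\Omega,T})$. By \eqref{wad}, for every $\epsilon>0$ the function $u$ is bounded on $\overline{\Omega}\times[\epsilon,T)$, so $u(\cdot,\epsilon)\in L^{\infty}(\Omega)$. The assumption $1<q\leqq 2$ allows us to apply the interior regularity result \cite[Theorem 2.16]{BiDao1}: for any $B_R\Subset\Omega$ and $0<s<\tau<T$,
\[
\|u\|_{C^{2+\gamma,1+\gamma/2}(Q_{B_R,s,\tau})}\leqq C\,\Phi(\|u\|_{L^{\infty}(Q_{B_{2R},s/2,\tau})}),
\]
and the right-hand side is finite by \eqref{wad}. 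Hence $u\in C^{2,1}(Q_{\Omega,T})$. For the regularity up to $\partial\Omega$, I would invoke the universal a priori estimates of \cite{CLS} recalled at \eqref{gd}: once $u$ is known to be bounded on $\overline{\Omega}\times[\epsilon,T)$ and vanishes on $\partial\Omega\times[\epsilon,T)$, the uniqueness of bounded solutions with bounded initial data forces $u$ to coincide on $[\epsilon,T)$ with the classical solution of the Dirichlet problem starting from $u(\cdot,\epsilon)$, which satisfies \eqref{gd}. In particular $|\nabla u|$ is bounded on every $\overline{\Omega}\times[\epsilon,\tau]$, and then parabolic boundary Hölder/Schauder regularity for the equation $u_t-\Delta u=-|\nabla u|^q$ with Dirichlet data yields $u\in C^{1}(\overline{\Omega}\times(0,T))$.

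For (ii), let $(u_n)$ be a sequence of weak solutions of $(D_{\Omega,T})$. Estimate \eqref{wad} gives a bound on $\|u_n\|_{L^{\infty}(\overline{\Omega}\times[s,\tau])}$ independent of $n$ for any $0<s<\tau<T$, and \eqref{gd} gives a bound on $\|\nabla u_n\|_{L^{\infty}(\overline{\Omega}\times[s,\tau])}$ independent of $n$. Interior parabolic Schauder estimates then produce a uniform bound for $(u_n)$ in $C^{2+\gamma,1+\gamma/2}(Q_{K,s,\tau})$ on every compact $K\Subset\Omega$, while parabolic boundary Schauder estimates (equation with a bounded inhomogeneity $-|\nabla u_n|^q$ and zero Dirichlet data, applied on $Q_{\Omega,s,\tau}$) yield a uniform bound in $C^{1+\gamma,(1+\gamma)/2}(\overline{\Omega}\times[s,\tau])$. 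By Ascoli--Arzelà and a diagonal extraction over exhausting compact sets and shrinking $s\downarrow 0$, $\tau\uparrow T$, one obtains a subsequence converging in $C^{2,1}_{loc}(Q_{\Omega,T})\cap C^{1}(\overline{\Omega}\times(0,T))$ to some nonnegative $u$. The limit satisfies \eqref{dis} pointwise, vanishes on $\partial\Omega\times(0,T)$, and inherits from the uniform bounds the integrability $u\in C((0,T);L^{1}(\Omega))\cap L^{1}_{loc}((0,T);W^{1,1}_0(\Omega))$ with $|\nabla u|^q\in L^{1}_{loc}((0,T);L^{1}(\Omega))$, hence is a weak solution.

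The main obstacle is the regularity up to $\partial\Omega$: weak solutions are a priori only in $L^{1}_{loc}((0,T);W^{1,1}_0(\Omega))$, so $C^{1}(\overline{\Omega}\times(0,T))$-regularity cannot be read off directly from the definition. The crucial input is that, thanks to \eqref{wad}, after any positive time the data is bounded, and then the universal boundary estimate \eqref{gd} of \cite{CLS} (together with the uniqueness of bounded solutions with $L^{\infty}$ initial data) forces the weak solution to coincide with the classical one, bringing the full boundary regularity with it.
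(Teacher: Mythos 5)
There is a genuine gap in your treatment of the regularity up to $\partial\Omega$, which is in fact the only nontrivial point of the corollary. The interior part of your argument is fine and matches the paper: the universal estimate (\ref{wad}) gives $u\in L_{loc}^{\infty}((0,T);L^{\infty}(\Omega))$, and the interior Schauder-type estimate of \cite{BiDao1} then gives $u\in C^{2,1}(Q_{\Omega,T})$. But for the boundary the paper does not argue as you do: it simply invokes \cite[Theorem 2.17]{BiDao1}, a regularity theorem for \emph{bounded weak solutions of the Dirichlet problem} which directly yields $u\in C^{1}(\overline{\Omega}\times(0,T))$, and for part (ii) it uses the quantitative version \cite[Theorem 2.13]{BiDao1}, which bounds $\Vert u_n\Vert_{C(\overline{\Omega}\times[s,\tau])}+\Vert\nabla u_n\Vert_{C^{\upsilon,\upsilon/2}(\overline{\Omega}\times[s,\tau])}$ by an increasing function of $\Vert u_n\Vert_{L^{\infty}(Q_{\Omega,s/2,\tau})}$ alone; Ascoli--Arzel\`a then finishes (ii). No uniqueness statement is needed anywhere.

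Your route to the boundary regularity is circular. To identify $u$ on $[\epsilon,T)$ with ``the classical solution starting from $u(\cdot,\epsilon)$'' and thereby import (\ref{gd}), you need either (a) the classical well-posedness recalled at the beginning of Section \ref{omega}, which requires $u_0\in C_0^{1}(\overline{\Omega})$ --- but $u(\cdot,\epsilon)\in C^{1}$ up to $\partial\Omega$ is precisely what you are trying to prove --- or (b) a uniqueness theorem in a weak class containing $u$ with merely bounded initial data. No such theorem is available at this stage of the paper: uniqueness for the Dirichlet problem is established only later (Theorems \ref{unic} and \ref{nouveau}), under restrictions on $q$, and the proof of Theorem \ref{nouveau} itself \emph{uses} the regularity of this corollary together with (\ref{gd}). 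The same circularity reappears in your part (ii): to run boundary Schauder theory for $u_t-\Delta u=-|\nabla u_n|^{q}$ with a ``bounded inhomogeneity'' you need a uniform bound on $|\nabla u_n|$ up to $\partial\Omega$, which is exactly the conclusion you are after. The missing ingredient is a boundary regularity/gradient estimate for bounded weak solutions that does not pass through uniqueness; this is what \cite[Theorems 2.13 and 2.17]{BiDao1} supply, and without citing them (or reproving such an estimate, e.g.\ by barriers and a Bernstein argument up to the boundary) the proof is incomplete.
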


\begin{proof}
(i) From \cite[Theorem 2.9]{BiDao1}, any weak solution $u$ of $(D_{\Omega,T})$
such that $u\in L_{loc}^{\infty}(\left(  0,T\right)  ;L^{\infty}(\Omega))$
satisfies $u\in C^{2,1}\left(  Q_{\Omega,T}\right)  \cap C^{1,0}\left(
\overline{\Omega}\times\left(  0,T\right)  \right)  .$ And we obtain precisely
$u\in L_{loc}^{\infty}(\left(  0,T\right)  ;L^{\infty}(\Omega)),$ at Theorem
\ref{deco},3.\medskip

(ii) Moreover $(u_{n})$ is uniformly bounded in $L_{loc}^{\infty}\left(
0,T);L^{\infty}\left(  \Omega\right)  \right)  $. From \cite{BiDao1}, there
exists $\upsilon\in\left(  0,1\right)  $ such that, for any $0<s<\tau<T,$
\begin{equation}
\left\Vert u_{n}\right\Vert _{C(\overline{\Omega}\times\left[  s,\tau\right]
)}+\left\Vert \nabla u_{n}\right\Vert _{C^{\upsilon,\upsilon/2}(\overline
{\Omega}\times\left[  s,\tau\right]  )}\leqq C\Phi(\left\Vert u_{n}\right\Vert
_{L^{\infty}(Q_{\Omega,s/2,\tau})}) \label{gou}%
\end{equation}
where $C=C((N,q,\Omega,s,\tau,\upsilon),$ and $\Phi$ is an increasing
function. The conclusion follows.
\end{proof}

\begin{theorem}
\label{unic} Suppose $1<q<(N+2)/(N+1).$ For any $u_{0}\in\mathcal{M}_{b}%
^{+}(\Omega),$ problem (\ref{def}) admits a unique weak $\mathcal{M}$ solution.
\end{theorem}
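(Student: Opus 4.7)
The plan is to mirror the argument used for Theorem \ref{souc} in $\mathbb{R}^{N}$, replacing the standard heat kernel by the Dirichlet heat kernel on $\Omega$. First I would establish existence: a mild solution of (\ref{def}) with $u_{0}\in\mathcal{M}_{b}^{+}(\Omega)$ can be produced as in \cite{BeDa}, by approximating $u_{0}$ by a nondecreasing sequence $u_{0,n}\in C_{0}^{1}(\overline{\Omega})$, solving the classical Dirichlet problem, and passing to the limit using the decay estimate (\ref{ple}), the regularizing estimate of Theorem \ref{deco}, and the local compactness given by Corollary \ref{patt}. Since every mild solution is a weak $\mathcal{M}$ solution by the remark after the mild-solution definition, this furnishes existence.

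Next, for uniqueness let $u$ be any weak $\mathcal{M}$ solution. From part 2 of Lemma \ref{sim}, one has $|\nabla u|^{q}\in L_{loc}^{1}([0,T);L^{1}(\Omega))$ and $u$ is already a mild solution. Theorem \ref{deco} yields
\[
\Vert u(.,t)\Vert_{L^{\infty}(\Omega)}\leqq Ct^{-\sigma_{1,q}}\bigl(\int_{\Omega}du_{0}\bigr)^{\varpi_{1,q}},
\]
so $u(.,\epsilon)\in L^{\infty}(\Omega)$ for each $\epsilon>0$, and Corollary \ref{patt} promotes $u$ to $C^{2,1}(Q_{\Omega,T})\cap C^{1}(\overline{\Omega}\times(0,T))$. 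A Bernstein-type computation on $(\epsilon,T)$ with bounded initial value $u(.,\epsilon)$, in the spirit of (\ref{bela}) and \cite{CLS}, delivers the pointwise inequality $|\nabla u(.,t)|^{q}\leqq C u(.,t/2)/t$ in $\Omega$; combining this with the $L^{\infty}$ bound above and letting $\epsilon\rightarrow 0$ yields the global gradient estimate
\[
\Vert\nabla u(.,t)\Vert_{L^{\infty}(\Omega)}\leqq Ct^{-\vartheta_{1,q}}\bigl(\int_{\Omega}du_{0}\bigr)^{\varkappa_{1,q}},
\]
analogous to Corollary \ref{grad}.

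Finally, if $u_{1},u_{2}$ are two weak $\mathcal{M}$ solutions with the same initial datum $u_{0}$, subtracting their mild representations gives
\[
(u_{1}-u_{2})(.,t)=-\int_{0}^{t}e^{(t-s)\Delta}\bigl(|\nabla u_{1}(.,s)|^{q}-|\nabla u_{2}(.,s)|^{q}\bigr)\,ds.
\]
Fix $\rho\geqq 1$; using the pointwise inequality $\bigl||a|^{q}-|b|^{q}\bigr|\leqq q(|a|^{q-1}+|b|^{q-1})|a-b|$, together with the classical estimate $\Vert\nabla e^{(t-s)\Delta}\varphi\Vert_{L^{\rho}(\Omega)}\leqq C(t-s)^{-1/2}\Vert\varphi\Vert_{L^{\rho}(\Omega)}$ for the Dirichlet semigroup on the bounded domain $\Omega$, one gets
\[
\Vert\nabla(u_{1}-u_{2})(.,t)\Vert_{L^{\rho}(\Omega)}\leqq C\int_{0}^{t}(t-s)^{-1/2}s^{-(q-1)\vartheta_{1,q}}\Vert\nabla(u_{1}-u_{2})(.,s)\Vert_{L^{\rho}(\Omega)}\,ds.
\]
The hypothesis $q<(N+2)/(N+1)$ is precisely equivalent to $\tfrac{1}{2}+(q-1)\vartheta_{1,q}<1$, so the singular Gronwall lemma forces $\nabla(u_{1}-u_{2})\equiv 0$, whence $u_{1}=u_{2}$ since both vanish on $\partial\Omega$. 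The main technical obstacle will be the Bernstein gradient estimate up to the boundary, which must be taken from \cite{CLS} and carefully combined with the $L^{\infty}$ regularization at time $t/2$; once that is in place, the passage to the limit in the existence step and the singular Gronwall step are essentially identical to their $\mathbb{R}^{N}$ counterparts.
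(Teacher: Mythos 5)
Your first step---showing that any weak $\mathcal{M}$ solution is in fact a mild solution via Lemma \ref{sim}---is exactly what the paper does. But the paper then stops: it simply invokes the uniqueness theorem for mild $\mathcal{M}$ solutions of \cite[Theorem 3.2]{BeDa}, which already covers the range $1<q<(N+2)/(N+1)$. Your attempt to replace that citation by a direct singular-Gronwall argument, modelled on Theorem \ref{souc}, has a genuine gap at the point you yourself flag as ``the main technical obstacle.''

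The pointwise Bernstein inequality $|\nabla u(.,t)|^{q}\leqq Cu(.,t/2)/t$ cannot hold up to the boundary of $\Omega$: on $\partial\Omega$ one has $u=0$ while $|\nabla u|$ is in general nonzero (think of the normal derivative), so the inequality fails precisely where you need it, and \cite{CLS} does not supply a substitute---estimate (\ref{gd}) only gives $|\nabla u(.,t)|\leqq D(t)$ for an unspecified function $D$, with no rate $t^{-\vartheta_{1,q}}$ and no dependence on $\int_{\Omega}du_{0}$. Without the quantitative bound $\Vert\nabla u_{j}(.,s)\Vert_{L^{\infty}(\Omega)}^{q-1}\leqq Cs^{-(q-1)\vartheta_{1,q}}$ with $(q-1)\vartheta_{1,q}<\frac{1}{2}$, the kernel $(t-s)^{-1/2}s^{-(q-1)\vartheta_{1,q}}$ in your Gronwall step is not known to be integrable near $s=0$, and the argument collapses. (Gradient decay estimates for the Dirichlet problem do exist in the literature, e.g.\ \cite{BeDaLa}, but they require a separate boundary analysis that neither the paper nor your sketch provides.) Your exponent bookkeeping is correct---$\frac{1}{2}+(q-1)\vartheta_{1,q}<1$ is indeed equivalent to $q<(N+2)/(N+1)$---but the estimate it rests on is unproved here. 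The paper's route, reducing to mild solutions and citing \cite[Theorem 3.2]{BeDa}, avoids the issue entirely.
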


\begin{proof}
From \cite[Theorem 3.2]{BeDa}, \cite{Al}, for any (possibly signed) $u_{0}%
\in\mathcal{M}_{b}(\Omega),$ problem (\ref{def}) has a unique mild
$\mathcal{M}$ solution, and it is nonnegative when $u_{0}\in\mathcal{M}%
_{b}^{+}(\Omega)$. From Lemma \ref{sim}, any weak $\mathcal{M}$ solution is a
mild $\mathcal{M}$ solution, thus uniqueness holds in this class.$\medskip$
\end{proof}

Next assume that $u_{0}\in L^{r}(\Omega)$ and consider the subcritical case
(\ref{subi}). In \cite[Theorem 3.3]{BeDa}, it is proved that there exists a
weak $L^{r}$ solution such that $u\in L_{loc}^{q}(\left[  0,T\right)
;W_{0}^{1,qr}\left(  \Omega\right)  ),$ and \textit{it is unique in this
space.} The local existence and uniqueness in an interval $\left(
0,T_{1}\right)  $ is obtained by the Banach fixed point theorem in a ball of
radius $K_{1}$ of the space
\[
X_{K_{1}}(T_{1})=\left\{  u\in C(\left(  0,T_{1}\right]  ,W_{0}^{1,qr}\left(
\Omega\right)  ):\sup_{\left(  0,t_{1}\right]  }t^{\theta}(\left\Vert
u(.,t)\right\Vert _{L^{qr}\left(  \Omega\right)  }+t^{\frac{1}{2}}\left\Vert
\nabla u(.,t)\right\Vert _{L^{qr}\left(  \Omega\right)  })<\infty\right\}
\]
where $\theta=N/2rq^{\prime},$ under the condition
\begin{equation}
\left\Vert u_{0}\right\Vert _{L^{r}\left(  \Omega\right)  }+K_{1}^{q}%
T_{1}^{\gamma}\leqq CK_{1},\qquad\text{where }\gamma=1-q(\theta+1/2)\quad
\text{and }C=C(N,q,r,\Omega). \label{fac}%
\end{equation}
We prove the uniqueness \textit{with no condition of integrability}:

\begin{theorem}
\label{nouveau}Assume that $u_{0}\in L^{r}(\Omega)$ and $1<q<(N+2r)/(N+r).$
Then problem (\ref{def}) admits a unique weak $L^{r}$ solution.
\end{theorem}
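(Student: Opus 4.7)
The plan is to reduce the problem to \cite[Theorem 3.3]{BeDa}, where existence and uniqueness of a weak $L^{r}$ solution are proved in the restricted class $u\in L_{loc}^{q}([0,T);W_{0}^{1,qr}(\Omega))$. Thus it suffices to show that every weak $L^{r}$ solution automatically belongs to this class, so that the a priori integrability assumption of \cite{BeDa} can be dropped.

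First, I would observe that $q<(N+2r)/(N+r)<2$, so Theorem \ref{deco} gives the decay $\Vert u(.,t)\Vert_{L^{\infty}(\Omega)}\leqq Ct^{-\sigma_{r,q}}\Vert u_{0}\Vert_{L^{r}(\Omega)}^{\varpi_{r,q}}$, and Corollary \ref{patt} gives $u\in C^{2,1}(Q_{\Omega,T})\cap C^{1}(\overline{\Omega}\times(0,T))$. Hence for any $\epsilon>0$, $u(.,\epsilon)\in C_{0}^{1}(\overline{\Omega})$, and the classical Bernstein-type estimates of \cite{CLS} and \cite{BeLa99} apply on $[\epsilon,T)$ with initial time $\epsilon$, yielding $|\nabla u(.,t)|^{q}\leqq C\,u(.,t)/(t-\epsilon)$. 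Mimicking the proof of Corollary \ref{grad}, combining this pointwise bound with the $L^{\infty}$ decay above and letting $\epsilon\to0$, I would obtain in parallel with (\ref{vou})
$$\int_{\Omega}|\nabla u(.,t)|^{qr}dx\leqq C\,t^{-r(\frac{q}{2}+\sigma_{r,q}(q-1))}\,\Vert u_{0}\Vert_{L^{r}(\Omega)}^{(1+\varpi_{r,q}(q-1))r}.$$

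The condition $q<(N+2r)/(N+r)$ is exactly the condition $q/2+\sigma_{r,q}(q-1)<1$ already exploited in the proof of Theorem \ref{val}, and it ensures that $t\mapsto\Vert\nabla u(.,t)\Vert_{L^{qr}(\Omega)}^{q}$ is integrable near $t=0$. Together with the trace property $u(.,t)\in W_{0}^{1,1}(\Omega)$ built into the very definition of weak solution, and the continuity $u\in C([0,T);L^{r}(\Omega))$, this places $u$ in $L_{loc}^{q}([0,T);W_{0}^{1,qr}(\Omega))$, the uniqueness class of \cite[Theorem 3.3]{BeDa}. Two weak $L^{r}$ solutions sharing the same initial datum must therefore coincide.

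The main obstacle is step 2, namely transporting the global-in-space Bernstein pointwise bound from $\mathbb{R}^{N}$, where it is classical, to the bounded-domain setting up to $\partial\Omega$. The delicate point is to control $|\nabla u|^{2}$ near the boundary before applying the parabolic maximum principle; once the $C_{0}^{1}(\overline{\Omega})$ regularity at positive time is secured by Corollary \ref{patt}, the argument closes using the universal boundary gradient estimate of \cite{CLS} recalled in (\ref{gd}), which is why those estimates were placed at the beginning of this section.
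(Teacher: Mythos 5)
Your overall strategy coincides with the paper's: show that every weak $L^{r}$ solution automatically lies in the uniqueness class of \cite[Theorem 3.3]{BeDa}, so that the a priori integrability hypothesis there can be dropped. The divergence, and the gap, is in how you produce the integrability of $\Vert\nabla u(.,t)\Vert_{L^{qr}(\Omega)}^{q}$ near $t=0$. Your argument rests on the pointwise Bernstein bound $|\nabla u(.,t)|^{q}\leqq C\,u(.,t)/(t-\epsilon)$, i.e.\ the Dirichlet analogue of (\ref{bela}) and (\ref{lis}). That bound cannot hold up to $\partial\Omega$: on the boundary $u$ vanishes while its normal derivative is in general nonzero, so no inequality of the form $|\nabla u|^{q}\leqq Cu/t$ survives near $\partial\Omega$. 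The estimate (\ref{gd}) of \cite{CLS}, which you invoke to close this step, only asserts $|\nabla u(.,t)|\leqq D(t)$ for an unspecified continuous $D$; being universal (independent of $u_{0}$), its blow-up as $t\to0$ is at best of order $t^{-1/(q-1)}$, which gives $\Vert\nabla u(.,t)\Vert_{L^{qr}}^{q}\lesssim t^{-q/(q-1)}$ and is never integrable at $0$. So the claimed analogue of (\ref{vou}) in $\Omega$, and hence the membership $u\in L_{loc}^{q}([0,T);W_{0}^{1,qr}(\Omega))$, is not justified by what you have written.

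The paper circumvents the boundary issue entirely by a different mechanism: it restarts the Banach fixed-point construction of \cite{BeDa} from time $\epsilon$. Since $|\nabla u|$ is bounded on $Q_{\Omega,\epsilon,T}$ by (\ref{gd}), the shifted function $v_{\epsilon}(.,t)=u(.,t+\epsilon)$ lies in $C((0,T-\epsilon);W_{0}^{1,qr}(\Omega))$ and is therefore \emph{the} solution produced by the fixed point with data $u(.,\epsilon)$; it consequently inherits the ball bound $t^{\theta}\bigl(\Vert v_{\epsilon}(.,t)\Vert_{L^{qr}(\Omega)}+t^{1/2}\Vert\nabla v_{\epsilon}(.,t)\Vert_{L^{qr}(\Omega)}\bigr)\leqq K_{1}$, where $K_{1}$ and $T_{1}$ depend only on $\Vert u_{0}\Vert_{L^{r}(\Omega)}\geqq\Vert u(.,\epsilon)\Vert_{L^{r}(\Omega)}$ through (\ref{fac}), hence are uniform in $\epsilon$. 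Letting $\epsilon\to0$ by Fatou transfers the bound to $u$ itself, and since $\gamma=1-q(\theta+1/2)>0$ this weighted estimate yields exactly the $L_{loc}^{q}$ integrability of $\Vert\nabla u(.,t)\Vert_{L^{qr}(\Omega)}$ at $t=0$. If you want to repair your proof, you should replace the Bernstein step by this propagation of the fixed-point bound (or supply a genuine boundary gradient estimate with the correct $u_{0}$-dependent rate, which (\ref{gd}) does not provide).
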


\begin{proof}
Let $\epsilon>0.$ From Theorem \ref{deco}, $u$ is bounded on $(\epsilon,T)$
for any $\epsilon\in\left(  0,T\right)  $. Then $u\in C^{2,1}(Q_{\Omega
,T})\cap C^{1,0}(\overline{\Omega}\times\left(  0,T\right)  )$ because $q<2,$
from \cite[Theorem 2.10]{BiDao1}. From (\ref{gd}), there exists a function
$D\in C((0,\infty)$ such that for any $\epsilon>0$ and for $t\geqq\epsilon$
\[
\Vert\nabla u(.,t)\Vert_{L^{\infty}(\Omega)}\leqq D(t-\epsilon).
\]
Then $\left\vert \nabla u\right\vert $ is bounded in $Q_{\epsilon,T,\Omega}$
for any $\epsilon>0.$ Thus $u\in C((0,T),W_{0}^{1,qr}\left(  \Omega\right)
).$ The problem with initial data $u(.,\epsilon)$ at time $0$ has a unique
solution $v_{\epsilon}$ such that $v_{\epsilon}\in C((0,T-\epsilon
),W_{0}^{1,qr}\left(  \Omega\right)  ),$ then $v_{\epsilon}%
(.,t)=u(.,t+\epsilon).$ Let $K_{1}$ and $T_{1}$ such that (\ref{fac}) holds.
Since $\left\Vert u(.,\epsilon)\right\Vert _{L^{r}\left(  \Omega\right)
}\leqq$ $\left\Vert u_{0}\right\Vert _{L^{r}\left(  \Omega\right)  }$, we also
have $\left\Vert v_{\epsilon}(0)\right\Vert _{L^{r}\left(  \Omega\right)
}+K_{1}^{q}T_{1}^{\gamma}\leqq CK_{1},$ thus for any $t\in(0,T_{1})$%
\[
t^{\theta}(\left\Vert v_{\epsilon}(.,t)\right\Vert _{L^{qr}\left(
\Omega\right)  }+t^{\frac{1}{2}}\left\Vert \nabla v_{\epsilon}(.,t)\right\Vert
_{L^{qr}\left(  \Omega\right)  })\leqq K_{1}.
\]
Going to the limit as $\epsilon\rightarrow0$ from the Fatou Lemma, we obtain
\[
t^{\theta}(\left\Vert u(.,t)\right\Vert _{L^{qr}\left(  \Omega\right)
}+t^{\frac{1}{2}}\left\Vert \nabla u(.,t)\right\Vert _{L^{qr}\left(
\Omega\right)  })\leqq K_{1}.
\]
Uniqueness follows in $(0,T_{1})$, and by induction on $(0,T)$.\medskip
\end{proof}

Finally we give existence results for any $u_{0}\in L^{r}(\Omega),r\geq1$,
extending the results of \cite[Theorem 3.4]{BeDa} for $u_{0}\in L^{1}%
(\Omega),$ see also \cite{Po} for more general operators. We proceed as in
Proposition \ref{pou}.

\begin{proposition}
\label{plu} Let $1<q\leq2$. For any nonnegative $u_{0}\in L^{r}(\Omega
),r\geq1,$ there exists a weak $L^{r}$ solution of problem (\ref{def}). And it
is unique if $q=2.$
\end{proposition}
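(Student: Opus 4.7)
The plan is to adapt the monotone approximation of Proposition \ref{pou} to the Dirichlet setting by replacing the heat kernel on $\mathbb{R}^{N}$ by the semigroup $e^{t\Delta}$ on $\Omega$ and Corollary \ref{pass} by Corollary \ref{patt}, and to derive the uniqueness when $q=2$ from the Cole--Hopf substitution $v=1-e^{-u}$, which linearises (\ref{un}) into the heat equation. Set $u_{0,n}=\min(u_{0},n)$, so $u_{0,n}\nearrow u_{0}$ in $L^{r}(\Omega)$ and $u_{0,n}\in L^{\infty}(\Omega)$. If $1<q<2$, fix $\rho\geqq r$ with $\rho>N(q-1)/(2-q)$, so that $q<(N+2\rho)/(N+\rho)$; Theorem \ref{nouveau} then supplies a unique weak $L^{\rho}$ solution $u_{n}$ of (\ref{def}) with datum $u_{0,n}$. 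If $q=2$, let $w_{n}$ be the classical solution of the heat equation with Dirichlet condition and datum $1-e^{-u_{0,n}}\in L^{\infty}(\Omega)$; the maximum principle yields $0\leqq w_{n}\leqq 1-e^{-n}<1$, and $u_{n}=-\ln(1-w_{n})$ is then a bounded classical solution of (\ref{def}) with datum $u_{0,n}$, hence a weak $L^{\rho}$ solution for any $\rho\geqq r$. In either case Corollary \ref{patt} gives $u_{n}\in C^{2,1}(Q_{\Omega,T})\cap C^{1}(\overline{\Omega}\times(0,T))$, and the comparison principle yields the monotonicity $u_{n}\leqq u_{n+1}$ and the domination $u_{n}(.,t)\leqq e^{t\Delta}u_{0,n}\leqq e^{t\Delta}u_{0}$.

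\textbf{Passage to the limit and initial trace.} The bound $u_{n}\leqq e^{t\Delta}u_{0}$ makes $(u_{n})$ uniformly bounded in $L_{loc}^{\infty}((0,T);L^{\infty}(\Omega))$, so Corollary \ref{patt}(ii) yields a subsequence converging in $C_{loc}^{2,1}(Q_{\Omega,T})\cap C^{1}(\overline{\Omega}\times(0,T))$ to a weak solution $u$ of $(D_{\Omega,T})$ with $0\leqq u\leqq e^{t\Delta}u_{0}$. Lemma \ref{sim}(ii) applied to each $u_{n}$ provides the uniform energy estimate $\int_{0}^{t}\!\int_{\Omega}|\nabla u_{n}|^{q}\,dxdt\leqq\int_{\Omega}u_{0}\,dx$, so by Fatou's lemma $|\nabla u|^{q}\in L_{loc}^{1}([0,T);L^{1}(\Omega))$ and $u\in L_{loc}^{1}([0,T);W_{0}^{1,1}(\Omega))$. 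To identify the initial datum in $L^{r}(\Omega)$, decompose $u(.,t)-u_{0}$ into positive and negative parts: the upper bound gives $\Vert(u(.,t)-u_{0})^{+}\Vert_{L^{r}(\Omega)}\leqq\Vert e^{t\Delta}u_{0}-u_{0}\Vert_{L^{r}(\Omega)}\rightarrow0$ by strong continuity of $e^{t\Delta}$ on $L^{r}(\Omega)$; the monotonicity $u\geqq u_{n}$ together with $u_{n}(.,t)\rightarrow u_{0,n}$ in $L^{\rho}(\Omega)\hookrightarrow L^{r}(\Omega)$ (using $\rho\geqq r$ and $|\Omega|<\infty$) yields
\[
\Vert(u_{0}-u(.,t))^{+}\Vert_{L^{r}(\Omega)}\leqq\Vert u_{0}-u_{0,n}\Vert_{L^{r}(\Omega)}+\Vert u_{n}(.,t)-u_{0,n}\Vert_{L^{r}(\Omega)},
\]
whose $\limsup$ as $t\rightarrow0$ and then $n\rightarrow\infty$ is zero. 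Hence $u(.,t)\rightarrow u_{0}$ in $L^{r}(\Omega)$, and $u\in C([0,T);L^{r}(\Omega))$ is a weak $L^{r}$ solution of (\ref{def}).

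\textbf{Uniqueness when $q=2$ and main difficulty.} Let $q=2$ and $u$ be any weak $L^{r}$ solution; Corollary \ref{patt} makes it classical in $Q_{\Omega,T}$, so $v=1-e^{-u}$ is a classical solution of $v_{t}-\Delta v=0$ in $Q_{\Omega,T}$ with homogeneous Dirichlet condition on $\partial\Omega\times(0,T)$ and $0\leqq v<1$; its initial trace in $L^{r}(\Omega)$ is $1-e^{-u_{0}}\in L^{\infty}(\Omega)$, inherited from the just-proved continuity $u(.,t)\rightarrow u_{0}$ in $L^{r}(\Omega)$. Uniqueness for the linear heat equation then pins $v$, and hence $u=-\ln(1-v)$, uniquely. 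The main delicate point in the whole argument is the recovery of the initial trace in $L^{r}(\Omega)$: the approximants $u_{n}$ only attain the truncations $u_{0,n}$, so the ``$+$-part'' of the trace must be controlled by the heat-semigroup majorant while the ``$-$-part'' is controlled by the monotone minorant, and only the combination of the two---with $L^{\rho}\hookrightarrow L^{r}$ providing the bridge---forces convergence to $u_{0}$ rather than to some smaller measure between $u_{0,n}$ and $u_{0}$.
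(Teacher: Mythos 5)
Your proof is correct and follows the same overall strategy as the paper's: truncate the data to $u_{0,n}=\min(u_0,n)$, solve with bounded data using the subcritical theory (resp.\ the transformation $v=1-e^{-u}$ when $q=2$), exploit monotonicity and the majorant $u_n(.,t)\leqq e^{t\Delta}u_0$, pass to the limit via the compactness of Corollary \ref{patt}, and then identify the initial trace. The one step you handle genuinely differently is the trace identification. The paper first invokes the initial-trace result of \cite[Proposition 2.11]{BiDao1} to obtain weak$^*$ convergence of $u(.,t)$ to a Radon measure $\mu_0$, sandwiches $u_{0,n}\leqq\mu_0\leqq u_0$ to conclude $\mu_0=u_0$, and only then upgrades to $L^r$ convergence by domination. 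You instead bypass the trace theory entirely with the two-sided decomposition $\Vert(u(.,t)-u_0)^{+}\Vert_{L^r}\leqq\Vert e^{t\Delta}u_0-u_0\Vert_{L^r}$ and $\Vert(u_0-u(.,t))^{+}\Vert_{L^r}\leqq\Vert u_0-u_{0,n}\Vert_{L^r}+\Vert u_n(.,t)-u_{0,n}\Vert_{L^r}$, which is self-contained (it uses only the continuity of $u_n$ at $t=0$ in $L^\rho\hookrightarrow L^r$ and the strong continuity of $e^{t\Delta}$) and arguably cleaner; what it costs is nothing beyond what the monotone construction already provides, whereas the paper's route has the advantage of reusing machinery it needs elsewhere. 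Your $q=2$ discussion is also more explicit than the paper's (which defers both existence and uniqueness to \cite[Theorem 4.2]{BASoWe} via the same substitution $v=1-e^{-u}$); the only point worth flagging there is that the convergence $v(.,t)\to 1-e^{-u_0}$ holds by the very definition of a weak $L^r$ solution (not only for the constructed one), and that the uniqueness of the bounded heat-equation solution $v$ with zero lateral data and $L^1$ initial trace should be quoted or briefly justified, as both proofs implicitly rely on it.
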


\begin{proof}
(i) Case $q<2.$ Let $u_{0,n}=\min(u_{0},n).$ Then for $\rho>N(q-1)/(2-q),$
from \cite[Theorem 3.3]{BeDa}, there exists a mild solution $u_{n}$ with
initial data $u_{0,n},$ and $u_{n}\in C(\left[  0,T\right)  ;L^{\rho}%
(\Omega))\cap L^{q}((0,T);W_{0}^{1,q\rho}(\Omega)\cap C^{2,1}\left(
Q_{\Omega,T}\right)  .$ Then $u_{n}(.,t)\leq e^{t\Delta}u_{0},$ and $\left(
u_{n}\right)  $ is nondecreasing and $\left\vert \nabla u_{n}\right\vert ^{q}$
is bounded in $L_{loc}^{1}\left(  \left[  0,T\right)  ;L^{1}(\Omega)\right)  $
from (\ref{ple}). From Corollary \ref{pass}, $\left(  u_{n}\right)  $
converges in $C_{loc}^{2,1}(Q_{\Omega,T})$ to a weak solution $u$ of
(\ref{un}) in $Q_{\Omega,T}$. As a consequence, $u(.,t)\leq e^{t\Delta}u_{0}$
and $\left\vert \nabla u\right\vert ^{q}\in$ $L_{loc}^{1}\left(  \left[
0,T\right)  ;L^{1}(\Omega)\right)  .$ From \cite[Proposition 2.11]{BiDao1},
$u(.,t)$ converges weakly* to some Radon measure $\mu_{0}$ on $\Omega$. And
$e^{t\Delta}u_{0}$ converges to $u_{0}$ in $L^{r}(\Omega),$ thus $\mu_{0}\in
L_{loc}^{1}(\Omega)$ and $0\leq\mu_{0}\leq u_{0}$. Since $u_{n}\leq u,$ there
holds $u_{0,n}\leq\mu_{0},$ hence $\mu_{0}=u_{0}\in L^{r}(\Omega).$ Also there
exists a function $g\in L^{r}(\Omega)$ such that $u(.,t)\leqq g$ for small
$t.$ Then $e^{t\Delta}u_{0}-u(.,t)$ converges weakly* to $0,$ and then in
$L_{loc}^{1}(\Omega).$ Hence $u(.,t)$ converges to $u_{0}$ in $L_{loc}%
^{1}(\Omega),$ then in $L^{r}(\Omega)$ from the dominated convergence theorem.
Thus $u\in C(\left[  0,T\right)  ;L^{r}\left(  \Omega\right)  ).\medskip$

(ii) Case $q=2.$ As in \cite[Theorem 4.2]{BASoWe}, using the classical
transformation $v=1-e^{-u},$ it can be shown that there exists a unique
solution $u$ such that $u\in C(\left[  0,T\right)  ;L^{r}\left(
\Omega\right)  )\cap C^{2,1}\left(  Q_{\Omega,T}\right)  \cap C^{1}\left(
\overline{\Omega}\times\left(  0,T\right)  \right)  $. Then it is a weak
$L^{r}$ solution. Reciprocally any weak $L^{r}$ solution $u$ satisfies the
conditions above, from Corollary \ref{patt} and \cite[Theorem 2.17]{BiDao1}.
\end{proof}

\section{Regularizing effects for quasilinear Dirichlet problems\label{quas}}

Here we extend some results of section \ref{omega} to a general quasilinear
problem, where $u$ may be a signed solution. In this section, we suppose
$\Omega$ is a smooth bounded domain in $\mathbb{R}^{N}.$ $\medskip$

Let $p>1$ and $\mathrm{A}$ be a Caratheodory function on $Q_{\Omega,\infty
}\times\mathbb{R}\times\mathbb{R}^{N}$ such that for any $(u,\eta
)\in\mathbb{R}\times\mathbb{R}^{N},$ and a.e. $(x,t)\in Q_{\Omega,\infty},$
\begin{equation}
\left\vert \mathrm{A}(x,t,u,\eta)\right\vert \leqq C(\left\vert \eta
\right\vert ^{p-1}+b(x,t)),\qquad C>0,\quad b\in L^{p^{\prime}}(Q_{\Omega
,\infty}), \label{hypa}%
\end{equation}
and $\mathrm{A}$ is nonnegative operator:
\begin{equation}
\mathrm{A}(x,t,u,\eta).\eta\geqq\nu\left\vert \eta\right\vert ^{p}\qquad
\nu\geqq0, \label{coe}%
\end{equation}
with no monotonicity assumption.\medskip

Let $q>1$ and $g$ be a Caratheodory function on $Q_{\Omega,\infty}%
\times\mathbb{R}^{+}\times\mathbb{R}^{N}$, such that%
\begin{equation}
g(x,t,u,\eta)u\geqq\gamma\left\vert u\right\vert ^{\lambda+1}\left\vert
\eta\right\vert ^{q},\text{ }\qquad\text{ }\lambda\geqq0,\quad\gamma\geqq0.
\label{mino}%
\end{equation}
We say that $\mathrm{A}$ is \textbf{coercive} if\textit{ }(\ref{coe}) holds
with $\nu>0,$ and $g$ is \textbf{coercive} if (\ref{mino}) holds with
$\gamma>0.\medskip$

We consider the solutions of the Dirichlet problem%
\begin{equation}
(P_{\Omega,T})\left\{
\begin{array}
[c]{l}%
u_{t}-\operatorname{div}(\mathrm{A}(x,t,u,\nabla u))+g(x,t,u,\nabla
u)=0,\quad\text{in}\hspace{0.05in}Q_{\Omega,T},\\
u=0,\quad\text{on}\hspace{0.05in}\partial\Omega\times(0,T),\\
u(x,0)=u_{0},
\end{array}
\right.  \label{gen}%
\end{equation}
where $u_{0}\in L^{r}\left(  \Omega\right)  ,$ $r\geqq1$ or only $u_{0}%
\in\mathcal{M}_{b}(\Omega).$

\subsection{Solutions of quasilinear heat equation with $L^{1}$ data}

First consider the problem in $Q_{\Omega,s,\tau}$
\begin{equation}
\left\{
\begin{array}
[c]{l}%
u_{t}-\operatorname{div}(\mathrm{A}(x,t,u,\nabla u))=f,\quad\text{in}%
\hspace{0.05in}Q_{\Omega,s,\tau},\\
u=0,\quad\text{on}\hspace{0.05in}\partial\Omega\times(s,\tau),\\
u(x,s)=u_{s}%
\end{array}
\right.  \label{cru}%
\end{equation}
Let us recall the notion of renormalized solution introduced in \cite{BlMu}
for this problem with $L^{1}$ data, where the truncations $T_{k}$ are defined
by (\ref{tro}):

\begin{definition}
\label{renor}Let $s,\tau\in\mathbb{R}$ with $s<\tau,$ and $f\in L^{1}%
(Q_{\Omega,s,\tau})$ and $u_{s}\in L^{1}(\Omega).$ A function $u\in L^{\infty
}((s,\tau);L^{1}(\Omega))$ is a renormalized solution in $Q_{\Omega,s,\tau}$
of (\ref{cru}) if $T_{k}(u)\in L^{p}((s,\tau);W_{0}^{1,p}(\Omega))$ for any
$k\geqq0,$ and for any $S\in W^{2,\infty}(\mathbb{R})$ such that $S^{\prime}$
has a compact support,
\begin{equation}
(S(u))_{t}-\operatorname{div}(\mathrm{A}(x,t,u,\nabla u)S^{\prime
}(u))+S^{\prime\prime}(u)(\mathrm{A}(x,t,u,\nabla u).\nabla u-S^{\prime
}(u)f=0\quad\text{in}\hspace{0.05in}\mathcal{D}^{\prime}(Q_{\Omega,s,\tau}),
\label{quo}%
\end{equation}
and $u(s)=u_{s},$ and
\begin{equation}
\lim_{n\rightarrow\infty}%
{\displaystyle\int}
\int_{Q_{\Omega,s,\tau}\cap\left\{  n\leqq u\leqq n+1\right\}  }|\nabla
u|^{p}dxdt=0, \label{ruo}%
\end{equation}

\end{definition}

\begin{remark}
\label{zzz} The initial condition takes sense from \cite{BlMu}, because $S(u)$
lies in the set
\begin{equation}
E=\left\{  \varphi\in L^{p}((0,T);W_{0}^{1,p}(\Omega)):\varphi_{t}\in
L^{p^{\prime}}((0,T);W^{-1,p^{\prime}}(\Omega))+L^{1}\left(  Q_{\Omega
,T}\right)  \right\}  \label{ens}%
\end{equation}
and $E\subset C(\left[  0,T\right]  ;L^{1}(\Omega)).$ Any function $\varphi\in
L^{p}((0,T);W_{0}^{1,p}(\Omega))\cap L^{\infty}\left(  Q_{\Omega,T}\right)  $
can be chosen as a test function in equation (\ref{quo}). Moreover, from
\cite[Lemma 7.1]{DrPr}, $v=S(u)$ satisfies for any $\psi\in C^{\infty}(\left[
s,\tau\right]  \times\bar{\Omega})$ the integration formula
\begin{equation}
\int_{s}^{\tau}<v_{t},M(v)\psi>=\int_{\Omega}\mathcal{M}(v(.,\tau))\psi
(.,\tau)dx-\int_{\Omega}\mathcal{M}(v(.,s))\psi(.,s)dx-\int_{s}^{\tau}%
\int_{\Omega}\psi_{t}\mathcal{M}(v)dxdt, \label{form}%
\end{equation}
for any function $M$ continuous and piecewise $C^{1}$ such that $M(0)=0$ and
$M^{\prime}$ has a compact support, where $\mathcal{M}(r)=\int_{0}^{r}%
M(\theta)d\theta.$
\end{remark}

A main point in the sequel is the choice of test functions: here we
approximate $\left\vert u\right\vert ^{\alpha-1}u$ for $\alpha>0$ by
truncation. In the following lemma, we solve some technical difficulties
arising because the truncates are not smooth enough to apply the integration
formula, and moreover we do not assume $\alpha\geqq1.$

\begin{lemma}
\label{admi}Let $s,\tau\in\mathbb{R}$ with $s<\tau,$ and $f\in L^{1}%
(Q_{\Omega,s,\tau})$. Let $u\in C(\left[  s,\tau\right]  ;L^{1}(\Omega))$ be
any nonnegative renormalized solution in $Q_{\Omega,s,\tau}$ of (\ref{cru}),
with $u_{s}=u(.,s)$. For any $\alpha>0$ and $k>0,$ we set
\[
\mathcal{T}_{k,\alpha}(r)=\int_{0}^{r}\left\vert T_{k}(\theta)\right\vert
^{\alpha-1}T_{k}(\theta)d\theta.
\]
Then $\left\vert T_{k}(u)\right\vert ^{\alpha-1}\mathrm{A}(x,t,u,\nabla
u).\nabla(T_{k}(u))\in L^{1}(Q_{\Omega,s,\tau})$ and
\begin{align}
&  \int_{\Omega}\mathcal{T}_{k,\alpha}(u)(.,\tau))dx+\alpha%
{\displaystyle\int}
\int_{Q_{\Omega,s,\tau}}\left\vert T_{k}(u)\right\vert ^{\alpha-1}%
\mathrm{A}(x,t,u,\nabla u).\nabla(T_{k}(u))dxdt\nonumber\\
&  =\int_{\Omega}\mathcal{T}_{k,\alpha}(u)(.,s))dx+\int_{s}^{\tau}\int%
_{\Omega}f\left\vert T_{k}(u)\right\vert ^{\alpha-1}T_{k}(u)dxdt. \label{rela}%
\end{align}

\end{lemma}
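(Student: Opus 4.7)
The plan is to insert a smooth approximation of $\theta\mapsto|T_k(\theta)|^{\alpha-1}T_k(\theta)$ as test function in the renormalized formulation (\ref{quo}), recover the time-integrated left-hand side of (\ref{rela}) through the integration formula of Remark \ref{zzz}, and then pass to the limit first in a truncation parameter $n$ (killing the spurious second-derivative term via (\ref{ruo})), then in a regularization parameter $\delta$ (exploiting the nonnegativity coming from (\ref{coe})).

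First I would fix $S_n\in W^{2,\infty}(\mathbb{R})$ with $S_n(r)=r$ for $|r|\leq n$, $|S_n'|\leq 1$ supported in $[-(n+1),n+1]$, and $|S_n''|\leq 2$ supported in $\{n\leq|r|\leq n+1\}$, with $|S_n(r)|\leq|r|$. Then $v_n:=S_n(u)\in E$ (see (\ref{ens})) and (\ref{quo}) gives
\[
(v_n)_t-\operatorname{div}(\mathrm{A}(x,t,u,\nabla u)S_n'(u))+S_n''(u)\mathrm{A}(x,t,u,\nabla u).\nabla u=S_n'(u)f
\]
in $\mathcal{D}'(Q_{\Omega,s,\tau})$. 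For the test profile I take $M(\theta)=|T_k(\theta)|^{\alpha-1}T_k(\theta)$ when $\alpha\geq 1$ (continuous, piecewise $C^1$, $M(0)=0$, $M'$ bounded and compactly supported) and, for $\alpha\in(0,1)$, the regularization
\[
M_\delta(\theta)=\operatorname{sign}(T_k(\theta))\bigl(|T_k(\theta)|^{\alpha}-\delta^{\alpha}\bigr)_+,
\]
which is Lipschitz with $M_\delta(0)=0$, derivative $M_\delta'(\theta)=\alpha|T_k(\theta)|^{\alpha-1}T_k'(\theta)\chi_{\{|T_k(\theta)|\geq\delta\}}$ bounded and compactly supported, $|M_\delta|\leq k^{\alpha}$, $M_\delta\to M$ pointwise, and $M_\delta'\nearrow\alpha|T_k(\theta)|^{\alpha-1}T_k'(\theta)$ as $\delta\downarrow 0$. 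Since $T_k(u)\in W_0^{1,p}$ and $M_\delta(0)=0$, $\varphi:=M_\delta(T_k(u))\in L^{\infty}\cap L^p((s,\tau);W_0^{1,p}(\Omega))$; for $n\geq k$, $T_k(v_n)=T_k(u)$, and $\varphi=\tilde M(v_n)$ with $\tilde M(\theta)=M_\delta(T_k(\theta))=M_\delta(\theta)$, so $\varphi$ is admissible against $(v_n)_t\in L^{p'}(W^{-1,p'})+L^1$.

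Testing the equation against $\varphi$, using (\ref{form}) with $\psi\equiv 1$ on the time-derivative, and integrating by parts the divergence term via $\nabla\varphi=M_\delta'(T_k(u))\nabla T_k(u)$ yields
\begin{align*}
&\int_\Omega\mathcal{M}_\delta(v_n(.,\tau))dx-\int_\Omega\mathcal{M}_\delta(v_n(.,s))dx+\int_s^\tau\!\!\int_\Omega S_n'(u)M_\delta'(T_k(u))\mathrm{A}.\nabla T_k(u)\,dxdt\\
&\qquad+\int_s^\tau\!\!\int_\Omega S_n''(u)M_\delta(T_k(u))\mathrm{A}.\nabla u\,dxdt=\int_s^\tau\!\!\int_\Omega S_n'(u)fM_\delta(T_k(u))\,dxdt.
\end{align*}
Sending $n\to\infty$ at $\delta>0$ fixed, the $S_n''$-term is bounded by $2k^{\alpha}\int\!\!\int_{\{n\leq|u|\leq n+1\}}|\mathrm{A}||\nabla u|\,dxdt$ and, by (\ref{hypa}) combined with Young, by $C\int\!\!\int_{\{n\leq|u|\leq n+1\}}(|\nabla u|^p+b^{p'})\,dxdt$, which tends to $0$ thanks to (\ref{ruo}) and absolute continuity of $\int b^{p'}$. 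The other terms pass by dominated convergence: $|\mathcal{M}_\delta(v_n)|\leq k^{\alpha}|v_n|\leq k^{\alpha}|u|\in L^1$, $|fM_\delta(T_k(u))|\leq k^{\alpha}|f|$, and for fixed $\delta$ the integrand of the divergence term is controlled by $\alpha\delta^{\alpha-1}(|\nabla T_k(u)|^p+b^{p'})\in L^1$ (using (\ref{hypa}) and $T_k(u)\in L^p((s,\tau);W_0^{1,p}(\Omega))$).

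Finally I would let $\delta\downarrow 0$. On $\{|u|<k\}$ one has $\nabla T_k(u)=\nabla u$, so (\ref{coe}) gives $\mathrm{A}(x,t,u,\nabla u).\nabla T_k(u)\geq\nu|\nabla u|^p\geq 0$, and on $\{|u|\geq k\}$ the integrand vanishes; since $M_\delta'(T_k(u))\geq 0$ and is monotone nondecreasing as $\delta\downarrow 0$, monotone convergence simultaneously establishes the $L^1$-integrability of $|T_k(u)|^{\alpha-1}\mathrm{A}.\nabla T_k(u)$ claimed by the lemma and identifies the divergence-term limit as $\alpha\int_s^\tau\!\!\int_\Omega|T_k(u)|^{\alpha-1}\mathrm{A}(x,t,u,\nabla u).\nabla T_k(u)\,dxdt$, while $|\mathcal{M}_\delta(r)|\leq k^{\alpha}|r|$ and $|M_\delta|\leq k^{\alpha}$ deliver dominated convergence on the boundary and source terms, producing (\ref{rela}). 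The principal obstacle is exactly the blow-up of the derivative of $|T_k(\cdot)|^{\alpha-1}T_k(\cdot)$ at the origin when $\alpha<1$, which forbids a direct substitution into the renormalization; the cutoff $M_\delta$ combined with the sign provided by (\ref{coe}) is what turns the $\delta\downarrow 0$ step into a monotone limit and thereby yields both the integrability and the equality in one stroke.
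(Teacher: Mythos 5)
Your proposal is correct and follows essentially the same route as the paper: test the $S_n$-renormalized equation with a regularization of $|T_k(\cdot)|^{\alpha-1}T_k(\cdot)$, integrate in time via the formula of Remark \ref{zzz}, kill the $S_n''$-term using (\ref{ruo}) together with (\ref{hypa}), and use the sign of $\mathrm{A}\cdot\nabla T_k(u)$ to obtain both the integrability and the identity in the limit $\delta\downarrow 0$. The only (harmless) differences are your choice of regularizer $(|T_k|^{\alpha}-\delta^{\alpha})_+$ in place of the paper's shifted power $(T_k(|\cdot|)+\delta)^{\alpha}-\delta^{\alpha}$, and the reversed order of the limits in $n$ and $\delta$.
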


\begin{proof}
Let $\alpha>0,k>0$ be fixed, and for any $n\geqq2,$ and $\theta\in\mathbb{R},$
\textbf{ }%
\[
S_{n}(\theta)=\int_{0}^{\theta}(1-\left\vert T_{1}(s-T_{n}(s)\right\vert
)ds,\qquad n\geqq2.
\]
This function, introduced in \cite{BlMu}, is still a truncation, smoother than
$T_{n+1},$ such that $0\leqq S_{n}(\theta)\theta\leqq T_{n+1}(\theta)\theta,$
supp $S_{n}^{\prime}\subset\left[  -(n+1),n+1\right]  $, $S_{n}^{\prime\prime
}=\chi_{(-n,-n-1)\cup(n,n+1)},$ and $S_{n}(T_{k}(\theta))=T_{k}(\theta)$ for
any $n>k.$ Let $\delta\in\left(  0,\min(1,k\right)  ),$ and $n>k.$ We set
\[
T_{\delta,k,\alpha}(\theta)=((T_{k}(\left\vert \theta\right\vert
)+\delta))^{\alpha}-\delta^{\alpha})\mathrm{sign}\theta,\qquad\mathcal{T}%
_{\delta,k,\alpha}(r)=\int_{0}^{r}T_{\delta,k,\alpha}(\theta)d\theta.
\]
We can take in (\ref{quo}) $S=S_{n}$ and $\varphi=T_{\delta,k,\alpha
}(u)=T_{\delta,k,\alpha}(S_{n}(u))$ as a test function. We obtain%
\begin{align*}
\int_{s}^{t}  &  <(S_{n}(u))_{t},\varphi>+\int_{s}^{t}\int_{\Omega}%
S_{n}^{\prime}(u)\mathrm{A}(x,t,u,\nabla u).\nabla\varphi dxdt\\
&  =\int_{s}^{t}\int_{\Omega}S_{n}^{\prime}(u)f\varphi dxdt-\int_{s}^{t}%
\int_{\Omega}S_{n}^{\prime\prime}(u)(\mathrm{A}(x,t,u,\nabla u).\nabla
u)\varphi dxdt.
\end{align*}
then from (\ref{form}), we deduce
\begin{align*}
&  \alpha%
{\displaystyle\int}
\int_{Q_{\Omega,s,\tau}}(T_{k}(\left\vert u\right\vert )+\delta)^{\alpha
-1}\mathrm{A}(x,t,u,\nabla u).\nabla(T_{k}(u))dxdt\\
&  =\int_{\Omega}\mathcal{T}_{\delta,k,\alpha}(S_{n}(u)(.,s))dx-\int_{\Omega
}\mathcal{T}_{\delta,k,\alpha}(S_{n}(u)(.,\tau))dx\\
&  +\int_{s}^{\tau}\int_{\Omega}S_{n}^{\prime}(u)f\varphi dxdt-\int_{s}%
^{t}\int_{\Omega}S_{n}^{\prime\prime}(u)(\mathrm{A}(x,t,u,\nabla u).\nabla
u)\varphi dxdt
\end{align*}
First we make $\delta\rightarrow0.$ Notice that $\left\vert \mathcal{T}%
_{\delta,k,\alpha}(\theta)\right\vert \leqq(k+1)^{\alpha}\left\vert
\theta\right\vert $ for any $\theta\in\mathbb{R},$ and $S_{n}(u)\in C(\left[
0,T\right]  ;L^{1}(\Omega)),$ and $S_{n}^{\prime}$ is bounded. Thus we can go
to the limit in the right hand side$.$ In the left hand side, from the
positivity of $A,$ and the Fatou Lemma we deduce that
\[
T_{k}(\left\vert u\right\vert )^{\alpha-1}\mathrm{A}(x,u,\nabla u).\nabla
T_{k}(u)\in L^{1}(Q_{\Omega,s,\tau}).
\]
Moreover we can apply dominated convergence theorem. Indeed $\mathrm{A}%
(x,u,\nabla u).\nabla T_{k}(u)$ $\in L^{1}(Q_{\Omega,s,\tau})$ from
(\ref{hypa}), since $T_{k}(u)\in L^{p}((s,\tau);W_{0}^{1,p}(\Omega)),$ and%
\[
(T_{k}(\left\vert u\right\vert )+\delta)^{\alpha-1}\mathrm{A}(x,t,u,\nabla
u).\nabla(T_{k}(u))\leqq\max(T_{k}^{\alpha-1}(\left\vert u\right\vert
),(k+1)^{\alpha-1})\mathrm{A}(x,u,\nabla u).\nabla(T_{k}(u)).
\]
Hence the same relation holds with $\delta=0$, with $T_{0,k,\alpha}%
(r)=T_{k}^{\alpha-1}(\left\vert u\right\vert )T_{k}(u):$
\begin{align*}
&  \int_{\Omega}\mathcal{T}_{k,\alpha}(S_{n}(u)(.,\tau))dx-\int_{\Omega
}\mathcal{T}_{k,\alpha}(S_{n}(u)(.,s))dx+\alpha\int_{s}^{t}\int_{\Omega}%
T_{k}^{\alpha-1}(\left\vert u\right\vert )\mathrm{A}(x,u,\nabla u).\nabla
(T_{k}(u))dxdt\\
&  =\int_{s}^{\tau}\int_{\Omega}S_{n}^{\prime}(u)fT_{0,k,\alpha}%
(u)dxdt-\int_{s}^{t}\int_{\Omega}S_{n}^{\prime\prime}(u)(\mathrm{A}%
(x,t,u,\nabla u).\nabla u)T_{0,k,,\alpha}(u)dxdt.
\end{align*}
Then we make $n\rightarrow\infty.$ Since $u\in C(\left[  0,T\right]
;L^{1}(\Omega)),$ for any $t\in\left[  s,\tau\right]  ,$ we find
\[
\lim_{n\rightarrow\infty}\int_{\Omega}\mathcal{T}_{k,\alpha}(S_{n}%
(u)(.,t))dx=\int_{\Omega}\mathcal{T}_{k,\alpha}(u(.,t))dx;
\]
moreover
\[
\lim_{n\rightarrow\infty}\int_{s}^{t}\int_{\Omega}S_{n}^{\prime\prime
}(u)(\mathrm{A}(x,t,u,\nabla u).\nabla u)\;T_{0,k,\alpha}(u)dxdt=0,
\]
from (\ref{ruo}), (\ref{hypa}). Finally%
\[
\lim_{n\rightarrow\infty}\int_{s}^{\tau}\int_{\Omega}S_{n}^{\prime
}(u)fT_{0,k,,\alpha}(u)dxdt=\int_{s}^{\tau}\int_{\Omega}fT_{0,k,,\alpha
}(u)dxdt,
\]
since $S_{n}^{\prime}(u)$ $\rightarrow1$ a.e. and is uniformly bounded. Then
(\ref{rela}) follows.
\end{proof}

\subsection{Notion of solutions of problem $(P_{\Omega,T})$}

\begin{definition}
We say that $u$ is a \textbf{renormalized solution} of problem $(P_{\Omega
,T})$ if:\medskip

\noindent(i) $u\in C((0,T);L^{1}(\Omega)),$ $T_{k}(u)\in L_{loc}%
^{p}((0,T);W_{0}^{1,p}(\Omega))$ for any $k\geqq0,$ and $g(x,u,\nabla u)\in
L_{loc}^{1}((0,T);L^{1}(\Omega));$\medskip

\noindent(ii) for any $0<s<\tau<T,$ $u$ is a renormalized solution of problem%
\[
\left\{
\begin{array}
[c]{l}%
u_{t}-\operatorname{div}(\mathrm{A}(x,t,u,\nabla u))+g(x,t,u,\nabla
u)=0,\quad\text{in}\hspace{0.05in}Q_{\Omega,s,\tau},\\
u=0,\quad\text{on}\hspace{0.05in}\partial\Omega\times(0,T),
\end{array}
\right.
\]
with initial data $u(.,s);$\medskip

\noindent(iii) for $u_{0}\in L^{r}(\Omega),$ the extension of $u$ by $u_{0}$
at time $0$ belongs to $C(\left[  0,T\right)  ;L^{r}(\Omega));$ for $u_{0}%
\in\mathcal{M}_{b}(\Omega),$ there holds%
\begin{equation}
\lim_{t\rightarrow0}\int_{\Omega}u(.,t)\psi dx=\int_{\Omega}\psi du_{0}%
,\qquad\forall\psi\in C_{b}(\Omega). \label{wou}%
\end{equation}
\medskip
\end{definition}

\begin{remark}
Recall that $\nabla u$ is defined by $\nabla u=\nabla(T_{k}(u))$ on the set
$\left\vert u\right\vert \leq k.$ The assumption on $g$ means that, for any
$0<s<\tau<T,$
\[
\int_{Q_{\Omega,s,\tau}}\left\vert g(.,u,\nabla u)\right\vert dxdt=\sum
_{k=1}^{\infty}\int_{Q_{\Omega,s,\tau}\cap\left\{  k-1\leqq\left\vert
u\right\vert \leqq k\right\}  }\left\vert g(.,u,\nabla(T_{k}(u))\right\vert
dxdt<\infty.
\]

\end{remark}

We first prove decay properties of the solutions.

\begin{theorem}
\label{dec}Let $p,q>1,$ and $A$ and $g$ satisfying (\ref{hypa}) (\ref{coe})
and (\ref{mino}).\medskip

1) Let $u_{0}\in L^{r}(\Omega),r\geqq1$ and $u$ be any renormalized solution
of $(P_{\Omega,T}).$ Then for any $t\in\left[  0,T\right)  ,$
\begin{equation}
\int_{\Omega}\left\vert u\right\vert ^{r}(.,t)dx\leqq\int_{\Omega}\left\vert
u_{0}\right\vert ^{r}dx. \label{des}%
\end{equation}
Moreover if $r>1,$ or if $g$ is coercive, then $\gamma\left\vert u\right\vert
^{\lambda+r-1}|\nabla u|^{q}+\nu\left\vert u\right\vert ^{r-2}|\nabla
u|^{p}\in L_{loc}^{1}(\left[  0,T\right)  ;L^{1}\left(  \Omega\right)  ),$
and
\begin{equation}
\int_{\Omega}\left\vert u\right\vert ^{r}(.,t)dx+r\gamma\int_{0}^{t}%
\int_{\Omega}\left\vert u\right\vert ^{\lambda+r-1}|\nabla u|^{q}%
dxdt+r(r-1)\nu\int_{0}^{t}\int_{\Omega}\left\vert u\right\vert ^{r-2}|\nabla
u|^{p}dxdt\leqq\int_{\Omega}\left\vert u_{0}\right\vert ^{r}dx. \label{bac}%
\end{equation}

2) Let $u_{0}\in\mathcal{M}_{b}^{+}(\Omega)$ and $u$ be any nonnegative
renormalized solution of $(P_{\Omega,T})$ of problem (\ref{def}). Then the
same conclusions hold as in case $u_{0}\in L^{1}(\Omega),$ where the norm
$\Vert u_{0}\Vert_{L^{1}(\Omega)}$ is replaced by $%
{\displaystyle\int_{\Omega}}
du_{0}.$
\end{theorem}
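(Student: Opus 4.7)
The strategy is to apply Lemma~\ref{admi} (after extending it to signed renormalized solutions) to each interior slab $Q_{\Omega,s,\tau}$ with $0<s<\tau<T$, viewing $(P_{\Omega,T})$ as problem (\ref{cru}) with source $f = -g(\cdot,\cdot,u,\nabla u)\in L^{1}(Q_{\Omega,s,\tau})$, then passing to the limit $k\to\infty$ and $s\to 0^{+}$. The extension of Lemma~\ref{admi} to signed $u$ is routine: one repeats its $S_{n}$-regularisation argument with the odd truncated power $|T_{k}(u)|^{\alpha-1}T_{k}(u)$, again relying on (\ref{ruo}) to annihilate the tail produced by $S_{n}''$.

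For part 1 with $r>1$, I take $\alpha = r-1>0$. Coercivity (\ref{coe}) bounds the resulting $A$-dissipation below by $(r-1)\nu\iint_{Q_{\Omega,s,\tau}}|T_{k}(u)|^{r-2}|\nabla T_{k}(u)|^{p}\,dxdt$, while (\ref{mino}) implies that $g$ has the sign of $u$, so $g\,|T_{k}(u)|^{r-2}T_{k}(u)\geq 0$ everywhere, and on $\{|u|\le k\}$ this quantity dominates $\gamma|u|^{\lambda+r-1}|\nabla u|^{q}$. Since $\mathcal{T}_{k,r-1}(u)\nearrow |u|^{r}/r$ monotonically and pointwise, Fatou on the dissipation and monotone convergence on the boundary terms yield, after $k\to\infty$,
\[
\tfrac{1}{r}\int_{\Omega}|u(.,\tau)|^{r}dx + (r-1)\nu\iint_{Q_{\Omega,s,\tau}}|u|^{r-2}|\nabla u|^{p}\,dxdt + \gamma\iint_{Q_{\Omega,s,\tau}}|u|^{\lambda+r-1}|\nabla u|^{q}\,dxdt \leq \tfrac{1}{r}\int_{\Omega}|u(.,s)|^{r}dx.
\]
Sending $s\to 0^{+}$ gives (\ref{des}) and (\ref{bac}): the right-hand side converges by $u\in C([0,T);L^{r}(\Omega))$, and Fatou controls the left-hand dissipation.

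The borderline case $r=1$ (where $\alpha=0$ is not admissible in Lemma~\ref{admi}) is handled by approximating $\mathrm{sign}(u)$ via $\mathrm{sign}_{\epsilon}(u) := T_{\epsilon}(u)/\epsilon$, equivalently by letting $\alpha\to 0^{+}$ in the identity. The $A$-contribution $\epsilon^{-1}\iint_{\{|u|\le\epsilon\}}A\cdot\nabla u\geq 0$ is discarded (its coefficient $(r-1)=0$ in (\ref{bac})), while the $g$-term converges to $\iint g\,\mathrm{sign}(u)$, bounded below by $\gamma\iint|u|^{\lambda}|\nabla u|^{q}$ when $g$ is coercive; the time-boundary primitive of $\mathrm{sign}_{\epsilon}$ converges to $|u|$. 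For part 2, since $u\in C((0,T);L^{1}(\Omega))$ is nonnegative, $u(.,\epsilon)\in L^{1}(\Omega)$ for every $\epsilon>0$, so I apply the $r=1$ result on $Q_{\Omega,\epsilon,T}$ and send $\epsilon\to 0^{+}$, using (\ref{wou}) with $\psi\equiv 1 \in C_{b}(\Omega)$ to identify $\int_{\Omega}u(.,\epsilon)\,dx \to \int_{\Omega}du_{0}$, together with Fatou on the dissipation to pass to the limit in the left-hand side.

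The main obstacle I anticipate is the coercive $r=1$ case: the $A$-dissipation degenerates as the test function approaches $\mathrm{sign}(u)$, yet the $\gamma$-term produced by $g$ must be retained in the limit. This forces a careful choice of admissible truncations (composed with the $S_{n}$-smoothing so as to remain in the class $E$ of Remark~\ref{zzz}) and a direct reuse of (\ref{ruo}) to kill the $S_{n}''$-remainder, in the same spirit as the proof of Lemma~\ref{admi}.
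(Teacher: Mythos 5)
Your proposal is correct and follows essentially the same route as the paper: apply Lemma \ref{admi} on $Q_{\Omega,s,\tau}$ with $f=-g(\cdot,\cdot,u,\nabla u)$, use (\ref{coe}) and (\ref{mino}) to sign the dissipation terms, take $\alpha=r-1$ for $r>1$ and let $\alpha\to0^{+}$ for $r=1$ (the paper does exactly this, via the bounds $|T_{k}(\theta)|^{\alpha+1}/(\alpha+1)\leqq\mathcal{T}_{k,\alpha}(\theta)\leqq k^{\alpha}|\theta|$), then pass $k\to\infty$ and $s\to0^{+}$ by Fatou and the continuity of $u$ at $t=0$, with (\ref{wou}) at $\psi\equiv1$ handling the measure case. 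The signed version of Lemma \ref{admi} you flag is already what the paper's formulas and proof implement (note the term $S_{n}''=-1_{[n,n+1]}+1_{[-n,-n-1]}$), so no genuine extension is needed.
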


\begin{proof}
\textbf{ } 1) Let $0<s<t<T.$ Then for any $\alpha>0,$ any $k>0,$ from Lemma
\ref{admi},
\begin{align*}
&  \text{ }\int_{\Omega}\mathcal{T}_{k,\alpha}(u)(.,\tau))dx+\alpha\int%
_{s}^{t}\int_{\Omega}\left\vert T_{k}(u)\right\vert ^{\alpha-1}\mathrm{A}%
(x,t,u,\nabla u).\nabla(T_{k}(u))dxdt\\
&  =\int_{\Omega}\mathcal{T}_{k,\alpha}(u)(.,s))dx-\int_{s}^{\tau}\int%
_{\Omega}\left\vert T_{k}(u)\right\vert ^{\alpha-1}T_{k}(u)g(.,u,\nabla
u)dxdt.
\end{align*}
And $\left\vert T_{k}(u)\right\vert ^{\alpha-1}T_{k}(u)g(.,u,\nabla
u)\geq\gamma\left\vert T_{k}(u)\right\vert ^{\alpha+\lambda}\left\vert \nabla
T_{k}(u)\right\vert ^{q}$ from (\ref{mino}). Therefore $\int_{\Omega
}\mathcal{T}_{k,\alpha}(u)(.,t))$ is decreasing for any $k,\alpha>0,$ and
\begin{align}
&  \int_{\Omega}\mathcal{T}_{k,\alpha}(u)(.,\tau))dx+\gamma\int_{s}^{t}%
\int_{\Omega}\left\vert T_{k}(u)\right\vert ^{\alpha+\lambda}\left\vert \nabla
T_{k}(u)\right\vert ^{q}dxdt+\alpha\nu\int_{s}^{t}\int_{\Omega}\left\vert
T_{k}(u)\right\vert ^{\alpha-1}\left\vert \nabla T_{k}(u)\right\vert
^{p}dxdt\nonumber\\
&  \leqq\int_{\Omega}\mathcal{T}_{k,\alpha}(u)(.,s))dx. \label{arc}%
\end{align}
$\bullet$ If $r>1,$ we can take $\alpha=r-1>0$ in (\ref{arc}) and get
\begin{align}
&  \int_{\Omega}\mathcal{T}_{k,r-1}(u)(.,t))dx+\gamma\int_{s}^{t}\int_{\Omega
}\left\vert T_{k}(u)\right\vert ^{r-1+\lambda}\left\vert \nabla T_{k}%
(u)\right\vert ^{q}dxdt+\alpha\nu\int_{s}^{t}\int_{\Omega}\left\vert
T_{k}(u)\right\vert ^{r-2}\left\vert \nabla T_{k}(u)\right\vert ^{p}%
dxdt\nonumber\\
&  \leqq\int_{\Omega}\mathcal{T}_{k,r-1}(u)(.,s))dx\leqq\frac{1}{r}%
\int_{\Omega}\left\vert u\right\vert ^{r}(.,s)dx. \label{koc}%
\end{align}
Since $u\in C(\left[  0,T\right)  ;L^{r}(\Omega))$ we can go to the limit as
$k\rightarrow\infty,$ and $s\rightarrow0;$ we obtain that $\gamma\left\vert
u\right\vert ^{r-1+\lambda}\left\vert \nabla u\right\vert ^{q}$ and $\alpha
\nu\left\vert u\right\vert ^{r-2}\left\vert \nabla u\right\vert ^{p}$ belong
to $L_{loc}^{1}\left(  \left[  0,T\right)  ;L^{1}(\Omega)\right)  ;$ and for
any $t\in\left(  0,T\right)  ,$
\[
\int_{\Omega}\left\vert u\right\vert ^{r}(.,t)dx+r\gamma\int_{0}^{t}%
\int_{\Omega}\left\vert u\right\vert ^{r-1+\lambda}\left\vert \nabla
u\right\vert ^{q}dxdt+r(r-1)\nu\int_{0}^{t}\int_{\Omega}\left\vert
u\right\vert ^{r-2}\left\vert \nabla u\right\vert ^{p}dxdt\leqq\int_{\Omega
}\left\vert u_{0}\right\vert ^{r}dx.
\]
$\bullet$ If $r=1,$ we take any $\alpha>0$ in (\ref{arc}); notice that
\begin{equation}
\frac{\left\vert T_{k}(\theta)\right\vert ^{\alpha+1}}{\alpha+1}%
\leqq\mathcal{T}_{k,\alpha}(\theta)\leqq k^{\alpha}\left\vert \theta
\right\vert , \label{per}%
\end{equation}
for any $\theta>0.$ Then
\[
\int_{\Omega}\left\vert T_{k}(u)\right\vert ^{\alpha+1}(.,t))dx+(\alpha
+1)\gamma\int_{s}^{t}\int_{\Omega}\left\vert T_{k}(u)\right\vert
^{\alpha+\lambda}\left\vert \nabla T_{k}(u)\right\vert ^{q}dxdt\leqq
(\alpha+1)k^{\alpha}\int_{\Omega}\left\vert u\right\vert (.,s)dx.
\]
Going to the limit as $\alpha\rightarrow0,$ we deduce
\begin{equation}
\int_{\Omega}\left\vert T_{k}(u)\right\vert (.,t))dx+\gamma\int_{s}^{t}%
\int_{\Omega}\left\vert T_{k}(u)\right\vert ^{\lambda}\left\vert \nabla
T_{k}(u)\right\vert ^{q}dxdt\leqq\int_{\Omega}\left\vert u\right\vert (.,s)dx;
\label{rou}%
\end{equation}
and then as $s\rightarrow0$ we find
\begin{equation}
\int_{\Omega}\left\vert T_{k}(u)\right\vert (.,t))dx+\gamma\int_{s}^{t}%
\int_{\Omega}\left\vert T_{k}(u)\right\vert ^{\lambda}\left\vert \nabla
T_{k}(u)\right\vert ^{q}dxdt\leqq\int_{\Omega}\left\vert u_{0}\right\vert dx,
\label{loi}%
\end{equation}
and finally as $k\rightarrow\infty,$ we obtain that $\int_{\Omega}\left\vert
u\right\vert (.,t)dx\leqq\int_{\Omega}\left\vert u_{0}\right\vert dx.$
Moreover if $\gamma>0,$ we find
\[
\int_{\Omega}\left\vert u\right\vert (.,t)dx+\gamma\int_{0}^{t}\int_{\Omega
}\left\vert u\right\vert ^{\lambda}|\nabla u|^{q}dxdt\leqq\int_{\Omega
}\left\vert u_{0}\right\vert dx,
\]
thus (\ref{bac}) still holds with $r=1.\medskip$

\noindent2) We still find (\ref{rou}). And $\lim_{s\rightarrow0}\int_{\Omega
}u(.,s)dx=%
{\displaystyle\int_{\Omega}}
du_{0}$ from (\ref{wou}), hence the conclusion.\medskip
\end{proof}

Next we deduce $L^{\infty}$ estimates, in particular a universal one.

\begin{theorem}
\label{Bound2}Let $p,q>1,$ and $A$ and $g$ satisfying (\ref{hypa}) (\ref{coe})
and (\ref{mino}). Let $u_{0}\in L^{r}(\Omega),r\geqq1,$ and $u$ be any
renormalized solution of $(P_{\Omega,T}).$\medskip

(i) If $g$ is coercive, then%
\begin{equation}
\Vert u(.,t)\Vert_{L^{\infty}(\Omega)}\leqq\left\{
\begin{array}
[c]{ccc}%
Ct^{-\sigma_{r,q,\lambda}}\Vert u_{0}\Vert_{L^{r}(\Omega)}^{\varpi
_{r,q,N,\lambda}}, & C=C(N,q,r,\lambda,\gamma), & \text{if }q\neq N,\\
C_{\varepsilon}t^{-(1+\varepsilon)\sigma_{r,n,\lambda}}\Vert u_{0}\Vert
_{L^{r}(\Omega)}^{\varpi_{r,q,N,\lambda}}, & C_{\varepsilon}=C(N,q,r,\lambda
,\gamma,\varepsilon), & \text{if }q=N,
\end{array}
\right.  \label{xac}%
\end{equation}
where
\[
\sigma_{r,q,N,\lambda}=\frac{1}{\frac{rq}{N}+\lambda+q-1}=\frac{N}{rq}%
\varpi_{r,q,N,\lambda}.
\]
Moreover
\begin{equation}
\Vert u(.,t)\Vert_{L^{\infty}(\Omega)}\leqq Ct^{-\frac{1}{q-1+\lambda}},\qquad
C=C(N,q,\lambda,\left\vert \Omega\right\vert ). \label{niv}%
\end{equation}

(ii) If $\mathrm{A}$ is coercive and $r>(2-p)N/p,$ in particular if
$p>2N/(N+1)$, then%
\begin{equation}
\Vert u(.,t)\Vert_{L^{\infty}(\Omega)}\leqq\left\{
\begin{array}
[c]{ccc}%
Ct^{-\sigma_{r,p,N,-1}}\Vert u_{0}\Vert_{L^{r}(\Omega)}^{\varpi_{r,p,N?-1}}, &
C=C(N,p,r,\nu,\Omega), & \text{if }p\neq N,\\
C_{\varepsilon}t^{-(1+\varepsilon)\sigma_{r,N,N,-1}}\Vert u_{0}\Vert
_{L^{r}(\Omega)}^{\varpi_{r,p,N,-1}}, & C_{\varepsilon}=C(N,p,r,\nu
,\Omega,\varepsilon), & \text{if }p=N,
\end{array}
\right.  \label{yoc}%
\end{equation}
where
\[
\sigma_{r,p,N,-1}=\frac{1}{\frac{rp}{N}+p-2}=\frac{N}{rp}\varpi_{r,p,N,-1}.
\]
Moreover if $p>2,$ then
\begin{equation}
\Vert u(.,t)\Vert_{L^{\infty}(\Omega)}\leqq Ct^{-\frac{1}{p-2}},\qquad
C=C(N,p,\left\vert \Omega\right\vert ). \label{pni}%
\end{equation}
(iii) The same conclusions hold if $u$ is nonnegative and $u_{0}\in
\mathcal{M}_{b}^{+}(\Omega),$ as in case $u_{0}\in L^{1}(\Omega),$ where the
norm $\Vert u_{0}\Vert_{L^{1}(\Omega)}$ is replaced by $%
{\displaystyle\int_{\Omega}}
du_{0}.$ In particular (\ref{pni}) holds for $p>2.$
\end{theorem}

\begin{proof}
(i) Let $0<s<t<T$. Since $g$ is coercive, from Theorem \ref{dec}, for any
$\alpha\geqq0$ such that\textbf{ }$\left\vert u\right\vert ^{\alpha+1}(.,s)\in
L^{1}(\Omega),$ there holds
\[
\int_{\Omega}\left\vert u\right\vert ^{\alpha+1}(.,t)dx+(\alpha+1)\gamma
\int_{s}^{t}\int_{\Omega}\left\vert u\right\vert ^{\lambda+\alpha}|\nabla
u|^{q}dxdt\leqq\int_{\Omega}\left\vert u\right\vert ^{\alpha+1}(.,s)dx,
\]
from (\ref{bac}); in particular
\[
\int_{\Omega}\left\vert T_{k}(u)\right\vert ^{\alpha+1}(.,t)dx+(\alpha
+1)\gamma\int_{s}^{t}\int_{\Omega}(T_{k}(u))^{\lambda+\alpha}\left\vert \nabla
T_{k}(u)\right\vert ^{q}dxdt\leqq\int_{\Omega}\left\vert u\right\vert
^{\alpha+1}(.,s)dx.
\]
And $\left\vert u\right\vert ^{\lambda+\alpha}|\nabla u|^{q}=|\nabla
(\left\vert u\right\vert ^{\beta-1}u)|^{q}$ with $\beta=1+(\alpha
+\lambda)/q\geqq1.$ Then $|\nabla((\left\vert u\right\vert ^{\beta
-1}u)(.,t))|$, and also $|\nabla((\left\vert T_{k}(u)\right\vert ^{\beta
-1}T_{k}(u))(.,t))|$ belong to $L^{q}(\Omega)$ for almost any $t\in\left(
0,T\right)  .$ Since $\left\vert T_{k}(u)\right\vert ^{\beta-1}T_{k}%
(u)(.,t)\in L^{\infty}(\Omega),$ it follows that $\left\vert T_{k}%
(u)\right\vert ^{\beta-1}T_{k}(u)(.,t)\in W^{1,q}\left(  \Omega\right)  $.
Moreover $T_{k}(u)(.,t)\in W_{0}^{1,p}\left(  \Omega\right)  ),$ hence
$\left\vert T_{k}(u)\right\vert ^{\beta-1}T_{k}(u)(.,t)\in W_{0}^{1,q}\left(
\Omega\right)  .$ If $q<N,$ we deduce
\[
\int_{\Omega}\left\vert T_{k}(u)\right\vert ^{\alpha+1}(.,t)dx+\gamma
\frac{(\alpha+1)C(N,q)}{\beta^{q}}\int_{s}^{t}(\int_{\Omega}\left\vert
T_{k}(u)\right\vert ^{\beta\frac{Nq}{N-q}}(.,\sigma)dx)^{\frac{N-q}{N}}%
d\sigma\leqq\int_{\Omega}\left\vert u\right\vert ^{\alpha+1}(.,s)dx.
\]
Going to the limit as $k\rightarrow\infty,$ we find
\[
\int_{\Omega}\left\vert u\right\vert ^{\alpha+1}(.,t)dx+\gamma\frac
{(\alpha+1)C(N,q)}{\beta^{q}}\int_{s}^{t}(\int_{\Omega}\left\vert u\right\vert
^{\beta\frac{Nq}{N-q}}(.,\sigma)dx)^{\frac{N-q}{N}}d\sigma\leqq\frac{1}%
{\alpha+1}\int_{\Omega}\left\vert u\right\vert ^{\alpha+1}(.,s)dx.
\]
Then we can apply Lemma \ref{prod} on $\left[  \epsilon,T\right)  $, with
$m=q$ and $\theta=N/(N-q)$; indeed (\ref{ele}) is satisfied, since
$\lambda\geqq0;$ we deduce the estimate for $\left[  \epsilon,T\right)  ,$
\[
\Vert u(.,t)\Vert_{L^{\infty}(\Omega)}\leqq C(t-\epsilon)^{-\sigma
_{r,q,N,\lambda}}\Vert u(.,\epsilon)\Vert_{L^{r}(\Omega)}^{\varpi
_{r,q,N,\lambda}},
\]
with $C=C(N,q,r,\lambda,\gamma,\Omega).$ Going to the limit as $\epsilon
\rightarrow0,$ we get (\ref{niv}), and (\ref{xac}) for $u_{0}\in L^{r}%
(\Omega),$ and the analogous when $u_{0}\in\mathcal{M}_{b}^{+}(\Omega).$ In
case $q\geqq N$ we proceed as in Theorem \ref{deco}.\medskip

(ii) Assume that $\mathrm{A}$ is coercive. Then for any $\alpha>0$,
\begin{equation}
\int_{\Omega}\mathcal{T}_{k,\alpha}(u)(.,t))dx+\alpha\nu\int_{s}^{t}%
\int_{\Omega}\left\vert T_{k}(u)\right\vert ^{\alpha-1}\left\vert \nabla
T_{k}(u)\right\vert ^{p}dxdt\leqq\int_{\Omega}\mathcal{T}_{k,\alpha
}(u)(.,s))dx,\nonumber
\end{equation}
from (\ref{arc}). First assume $p<N.$ From the Sobolev injection of
$W_{0}^{1,p}\left(  \Omega\right)  $ into $L^{Np/(N-p)}\left(  \Omega\right)
$, we deduce
\[
\frac{1}{\alpha+1}\int_{\Omega}\left\vert u\right\vert ^{\alpha+1}%
(.,t)dx+\alpha\nu\frac{C(N,p)}{k^{p}}\int_{s}^{t}(\int_{\Omega}^{k\frac
{Np}{N-p}}\left\vert u\right\vert (.,\sigma)dx)^{\frac{N-p}{N}}dt\leqq\frac
{1}{\alpha+1}\int_{\Omega}\left\vert u\right\vert ^{\alpha+1}(.,s)dx,
\]
with $k=1+(\alpha-1)/p.$\medskip\ 

First suppose $r>1;$ then we start from $\alpha_{0}=r-1>0$, and we can apply
Lemma \ref{prod} with $C_{0}=(r-1)\nu C(N,p),$ $m=p,$ $\theta=N/(N-p)$ and
$\lambda=-1;$ indeed (\ref{ele}) is satisfied, since $r>N(2-p)/p$.\medskip

Next suppose $r=1.$ Then $1>(2-p)N/p,$ thus $p-1+p/N>1.$ For any $\alpha>0,$
\[
\int_{\Omega}\left\vert T_{k}(u)\right\vert ^{\alpha+1}(.,t))dx+\alpha
(\alpha+1)\nu\int_{s}^{t}\int_{\Omega}\left\vert T_{k}(u)\right\vert
^{\alpha-1}\left\vert \nabla T_{k}(u)\right\vert ^{p}dxdt\leqq(\alpha
+1)k^{\alpha}\int_{\Omega}\left\vert u\right\vert (.,s)dx.
\]
Taking $\alpha=1,$ we get from (\ref{des}),
\[
\nu\int_{s}^{t}\int_{\Omega}\left\vert \nabla T_{k}(u)\right\vert
^{p}dxdt\leqq k\int_{\Omega}\left\vert u\right\vert (.,s)dx\leqq k\int%
_{\Omega}\left\vert u_{0}\right\vert dx.
\]
And from (\ref{des}), $u\in L^{\infty}\left(  (s,T);L^{1}\left(
\Omega\right)  \right)  .$ From standard estimates, there holds $u\in L^{\rho
}(Q_{\Omega,s,t})$ for any $\rho\in\left(  1,p-1+p/N\right)  ,$ see
\cite{BoGa}. Then $\left\vert u\right\vert ^{\rho}(.,t)$ $\in L^{1}\left(
\Omega\right)  $ for almost any $t\in\left(  0,T\right)  .$ Hence we can apply
Lemma \ref{prod} on $\left[  \epsilon,T\right)  $ for $\epsilon>0,$ with the
same parameters, after fixing such a $\rho=\rho_{p,N}$ such that $\rho
N(2-p)/p<1.$ We obtain that
\[
\Vert u(.,t)\Vert_{L^{\infty}(\Omega)}\leqq C(t-\epsilon)^{-\sigma_{1,p,-1}%
}\Vert u(.,\epsilon)\Vert_{L^{1}(\Omega)}^{\varpi_{1,p,-1}},
\]
where $C=C(N,p\rho_{p,N})=C(N,p);$ finally we go to the limit as
$\epsilon\rightarrow0$ because $u\in C(\left[  0,T\right]  ;L^{1}(\Omega)).$
Estimate (\ref{pni}) follows, since $-1+p-1>0.\medskip$

If $p=N,$ we proceed as above, applying Lemma \ref{prod} with $m=N,$
$\lambda=-1$ and $\theta>1$ arbitrary. Next assume $p>N.$ In case $r>1,$ there
holds, for any $t\in(0,T),$
\[
r(r-1)\frac{\nu}{\kappa^{p}}\int_{0}^{t}\int_{\mathbb{R}^{N}}|\nabla
(\left\vert u\right\vert ^{\kappa})|^{p}dxdt\leqq\int_{\mathbb{R}^{N}%
}\left\vert u_{0}\right\vert ^{r}dx,
\]
where $\kappa=1+(r-2)/p>0.$ From Lemma \ref{gani}, applied to $v=\left\vert
u\right\vert ^{\kappa},$ with $m=p,$ $1/k=1+r(p-N)/Np\kappa,$ we obtain
\[
\left\Vert u(.,t)\right\Vert _{L^{\infty}\left(  \mathbb{R}\right)  }%
^{\frac{\kappa p}{k}}\leqq C\left\Vert u(.,t)\right\Vert _{L^{r}\left(
\mathbb{R}\right)  }^{\frac{\kappa((1-k)p}{k}}\int_{\mathbb{R}}|\nabla
(\left\vert u\right\vert ^{\kappa})|^{p}dxdt;
\]
and by integration, with a new constant $C=C(N,p,r,\nu),$
\[
t\left\Vert u(.,t)\right\Vert _{L^{\infty}\left(  \mathbb{R}\right)  }%
^{\frac{\kappa p}{k}}\leqq C\left\Vert u_{0}\right\Vert _{L^{r}\left(
\mathbb{R}^{N}\right)  }^{r+\frac{\kappa((1-k)p}{k}},
\]
which is precisely (\ref{yoc}). In case $r=1,$ we choose $\rho=p\in\left(
1,p-1+p/N\right)  ,$ and obtain from above, for any $0<\epsilon<s<t<T,$
\[
\Vert u(.,t)\Vert_{L^{\infty}(\mathbb{R})}\leqq C(t-s)^{-\sigma_{1,p,p,-1}%
}\Vert u(.,s)\Vert_{L^{\rho}(\mathbb{R})}^{\varpi_{1,p,N,-1}}\leqq
C(t-\epsilon)^{-\sigma_{1,p,p,-1}}\Vert u(.,s)\Vert_{L^{\infty}(\mathbb{R}%
)}^{\frac{\varpi_{1,p,N,-1}}{p^{\prime}}}\Vert u_{0}\Vert_{L^{1}(\mathbb{R}%
)}^{\frac{\varpi_{1,p,N,-1}}{p}},
\]
where $C=C(N,p,\nu).$ From Lemma \ref{elem}, we deduce precisely
\[
\Vert u(.,t)\Vert_{L^{\infty}(\mathbb{R})}\leqq C(t-\epsilon)^{-\sigma
_{1,p,p-1}}\Vert u_{0}\Vert_{L^{1}(\mathbb{R})}^{\varpi_{1,p,N,-1}},
\]
and we conclude as $\epsilon\rightarrow0.$

(iii) We obtain the estimates on $\left(  \epsilon,T\right)  $ as above and go
to the limit as $\epsilon\rightarrow0$.
\end{proof}

\begin{remark}
Our results apply in particular to the problem
\[
\left\{
\begin{array}
[c]{l}%
u_{t}-\operatorname{div}(\mathrm{A}(x,t,u,\nabla u))=0,\quad\text{in}%
\hspace{0.05in}Q_{\Omega,T},\\
u=0,\quad\text{on}\hspace{0.05in}\partial\Omega\times(0,T),\\
u(x,0)=u_{0}%
\end{array}
\right.
\]
Thus we find again and improve the estimates of \cite[Theorem 5.3]{Por}, with
less regularity on the solutions: those estimates were proved for solutions
$u\in C(\left[  0,T\right)  ;L^{r}(\Omega))$ such that $u\in L^{p}(\left(
0,T\right)  ;W_{0}^{1,p}(\Omega))\cap C(\left[  0,T\right)  ;L^{2}(\Omega))$.
The notion of renormalized solutions, equivalent to the notion of entropy
solutions of \cite{Pr} (see \cite{DrPr}), is weaker. Moreover our results in
case $p>N$ are optimal.\bigskip
\end{remark}

\begin{remark}
The extension of results of section \ref{Rn} to the case of equation of type
(\ref{fru}) in the case $\Omega=\mathbb{R}^{N}$ will be treated a further article.
\end{remark}

\section{Appendix}

\begin{proof}
[Proof of Lemma \ref{lem}](i) Let $u$ be a mild $\mathcal{M}$ solution. Then
clearly (\ref{ha}) holds. Moreover for any $\psi\in C_{0}\left(
\mathbb{R}^{N}\right)  ,$ from the assumption on the gradient,
\[
<e^{t\Delta}u_{0},\psi>=<u_{0},e^{t\Delta}\psi>=\int_{\mathbb{R}^{n}%
}e^{t\Delta}\psi du_{0}=\int_{\mathbb{R}^{N}}(u(.,t)+\int_{0}^{t}%
e^{(t-s)\Delta}|\nabla u(.,s)|^{q}ds)\psi dx
\]
The relation extends to any $\varphi\in C_{b}\left(  \mathbb{R}^{N}\right)  :$
we can assume that $\varphi\geqq0;$ from the Beppo-Levi theorem,
\begin{align*}
\int_{\mathbb{R}^{N}}e^{t\Delta}\varphi du_{0}  &  =\int_{\mathbb{R}^{N}%
}u(.,t)\varphi dx+\int_{\mathbb{R}^{N}}(\int_{0}^{t}e^{(t-s)\Delta}|\nabla
u(.,s)|^{q}ds)\varphi dx\\
&  =\int_{\mathbb{R}^{N}}u(.,t)\varphi dx+\int_{0}^{t}\int_{\mathbb{R}^{N}%
}|\nabla u|^{q}\varphi dxds,
\end{align*}
since the measure is bounded. From the integrability of the gradient and the
dominated convergence theorem in $L^{1}(\mathbb{R}^{N},du_{0}),$ we deduce
\[
\lim_{t\rightarrow0}\int_{0}^{t}\int_{\mathbb{R}^{N}}|\nabla u|^{q}\varphi
dxds=0,\qquad\lim_{t\rightarrow0}\int_{\mathbb{R}^{N}}e^{t\Delta}\varphi
d\mu_{0}=\int_{\mathbb{R}^{N}}\varphi d\mu_{0},
\]
since $\left\Vert e^{t\Delta}\varphi\right\Vert _{L^{\infty}\left(
\mathbb{R}^{N}\right)  }\leq\left\Vert \varphi\right\Vert _{L^{\infty}\left(
\mathbb{R}^{N}\right)  }$ and $e^{t\Delta}\varphi$ converges to $\varphi$
everywhere as $t\rightarrow0;$ thus (\ref{hb}) holds.\medskip

(ii) Let $u$ be a weak semi-group solution. Then obviously $u\in
C_{b}((0,T);L^{1}\left(  \mathbb{R}^{N}\right)  ).$ As $\epsilon\rightarrow0,$
we have
\[
\lim_{\epsilon\rightarrow0}\int_{\epsilon}^{t}e^{(t-s)\Delta}|\nabla
u(.,s)|^{q}ds=\int_{0}^{t}e^{(t-s)\Delta}|\nabla u(.,s)|^{q}ds\qquad\text{in
}L^{1}(\mathbb{R}^{N}).
\]
Then
\[
\lim_{\epsilon\rightarrow0}e^{(t-\epsilon)\Delta}u(.,\epsilon)=u(.,t)+\int%
_{0}^{t}e^{(t-s)\Delta}|\nabla u(.,s)|^{q}ds\qquad\text{in }L^{1}%
(\mathbb{R}^{N}).
\]
Moreover (\ref{hb}) entails that that $u(.,\epsilon)\rightarrow u_{0}$ in
$\mathcal{S}^{\prime}(\mathbb{R}^{N})$ and
\begin{equation}
\lim_{\epsilon\rightarrow0}e^{(t-\epsilon)\Delta}u(.,\epsilon)=e^{t\Delta
}u_{0}\qquad\text{in }\mathcal{S}^{\prime}(\mathbb{R}^{N}); \label{chic}%
\end{equation}
indeed for any $\varphi\in\mathcal{S}(\mathbb{R}^{N}),$
\begin{align*}
\left\vert <e^{(t-\epsilon)\Delta}u(.,\epsilon)-e^{t\Delta}u_{0}%
,\varphi>\right\vert  &  \leq\left\vert <e^{t\Delta}(u(.,\epsilon
)-u_{0}(.),\varphi>\right\vert +\left\vert \int_{\mathbb{R}^{N}}%
(u(x,\epsilon)((e^{(t-\epsilon)\Delta}-e^{t\Delta})\varphi)(x)dx\right\vert \\
&  \leq\left\vert <e^{t\Delta}(u(.,\epsilon)-u_{0}(.),\varphi>\right\vert \\
&  +\left\Vert u(.,\epsilon)\right\Vert _{L^{1}(\mathbb{R}^{N})}\left\Vert
(e^{(t-\epsilon)\Delta}-e^{t\Delta})\varphi\right\Vert _{L^{\infty}%
(\mathbb{R}^{N})})
\end{align*}
and $e^{t\Delta}is$ continuous on $\mathcal{S}(\mathbb{R}^{N})$. Hence, for
any $\varphi\in\mathcal{S}(\mathbb{R}^{N}),$ we get
\[
<e^{t\Delta}u_{0},\varphi>=\int_{\mathbb{R}^{n}}u(.,t)\varphi dx+\int%
_{\mathbb{R}^{n}}(\int_{0}^{t}e^{(t-s)\Delta}|\nabla u(.,s)|^{q}ds)\varphi dx
\]
which extends to any $\varphi\in C_{0}(\mathbb{R}^{N})$ by density. Thus
(\ref{hc}) follows. \medskip
\end{proof}

\begin{acknowledgement}
We thank Professor F. Weissler for helpfull discussions during the preparation
of this article.
\end{acknowledgement}

\end{document}